\documentclass[reqno, a4paper, 11pt]{amsart}

\usepackage{setspace}
\usepackage{fullpage}
\usepackage[backref,colorlinks]{hyperref}
\usepackage{amssymb,bm}
\usepackage[foot]{amsaddr}
\usepackage[abbrev,msc-links]{amsrefs}
\usepackage[utf8]{inputenc}
\usepackage[shortlabels]{enumitem}
\usepackage{graphicx}
\usepackage{mathtools}
\usepackage{lineno}
\usepackage[framemethod=TikZ]{mdframed}
\usepackage{cases}

\theoremstyle{plain}
\newtheorem{theorem}{Theorem}[section]
\newtheorem{fact}[theorem]{Fact}
\newtheorem{prop}[theorem]{Proposition}

\newtheorem{cor}[theorem]{Corollary}
\newtheorem{lemma}[theorem]{Lemma}

\newtheorem*{clm*}{Claim}

\theoremstyle{definition}
\newtheorem{rem}[theorem]{Remark}
\newtheorem{Def}[theorem]{Definition}
\newtheorem{example}[theorem]{Example}
\newtheorem{qn}[theorem]{Question}
\newtheorem{conj}[theorem]{Conjecture}
\newtheorem{obs}[theorem]{Observation}
\newtheorem{prob}[theorem]{Problem}

\newmdtheoremenv[skipabove=2mm,skipbelow=4mm]{const}[theorem]{Construction}

\numberwithin{equation}{section}

\DeclareMathOperator{\ex}{ex}
\DeclareMathOperator{\ws}{wsat}
\DeclareMathOperator{\sat}{sat}

\DeclareMathOperator{\dist}{dist}

\DeclarePairedDelimiter\ceil{\lceil}{\rceil}
\DeclarePairedDelimiter\floor{\lfloor}{\rfloor}

\DeclarePairedDelimiter{\final}{\langle}{\rangle}
\DeclarePairedDelimiter\Set{\lbrace}{\rbrace}

\newcommand\N{\mathbb N}
\newcommand\Z{\mathbb Z}

\newcommand\cH{\mathcal{H}}

\let\phi=\varphi
\let\emptyset=\varnothing

\usepackage{euflag}

\let\oldtocsection=\tocsection

\let\oldtocsubsection=\tocsubsection

\let\oldtocsubsubsection=\tocsubsubsection

\renewcommand{\tocsection}[2]{\hspace{0em}\oldtocsection{#1}{#2}}
\renewcommand{\tocsubsection}[2]{\hspace{1em}\oldtocsubsection{#1}{#2}}
\renewcommand{\tocsubsubsection}[2]{\hspace{2em}\oldtocsubsubsection{#1}{#2}}

\let \phi=\varphi

\newcommand\tw{{\mathrm{tw}}}

\begin{document}

\title[Slow graph bootstrap percolation]{Graph bootstrap percolation \\--- a discovery of slowness}

\date{\today}
\author{David Fabian}
\author{Patrick Morris $^{1,\ast}$}
\author{Tibor Szab\'o $^{2,\dagger}$}

\address{$^1$ Departament de Matem\`atiques, Universitat Polit\`ecnica de Catalunya (UPC), Barcelona, Spain.}
\address{$^2$ Institute of Mathematics, Freie Universit\"at Berlin, Germany}

\email{dfabian@mailbox.org, pmorrismaths@gmail.com,  szabo@math.fu-berlin.de}

\thanks{$^\ast$ Research supported by   the DFG Walter Benjamin program - project number 504502205, and by   the European Union's Horizon Europe   Marie Sk{\l}odowska-Curie grant RAND-COMB-DESIGN - project number
101106032 {\euflag}.}

\thanks{$^\dagger$ Research
	supported by the DFG under Germany’s Excellence Strategy - The Berlin Mathematics Research Center
	MATH+ (EXC-2046/1, project ID: 390685689).}

\begin{abstract}

Graph bootstrap percolation is a discrete-time process capturing the spread of a virus on the edges of $K_n$. Given an initial set $G\subseteq K_n$ of infected edges, the transmission of the virus is governed by a fixed graph $H$: in each round of the process any edge $e$ of $K_n$ that is the last uninfected edge in a copy of $H$ in $K_n$ gets infected as well. Once infected, edges remain infected forever.  The process was introduced by Bollob\'as in 1968 in the context of weak saturation and has since inspired a vast array of beautiful mathematics. The main focus of this survey is the extremal question of {\em how long} the infection process can last before stabilising. 
We give an exposition of our recent systematic study of this \textit{maximum running time} and the influence of the infection rule $H$. 
The topic turns out to possess  a wide variety of interesting behaviour, with connections to additive, extremal and probabilistic combinatorics. Along the way we encounter a number of  
surprises and attractive  open problems. 
\end{abstract}

\maketitle

\vspace{24mm}
{\centering
A survey prepared for the occasion of the \\ $31^{\text{st}}$ British Combinatorial Conference (BCC) 2026 \\ Cardiff University, Cardiff, Wales.\par}

\newpage
{\fontsize{12.5}{12.5}\selectfont \tableofcontents}
\newpage

 \section{Introduction} \label{sec:intro}

 The central notion of this survey is \textit{graph bootstrap percolation}, a term coined by  Balogh, Bollob\'as, and Morris \cite{balogh2012graph}, although the process itself appeared much earlier, in a different context, in work of Bollob\'as \cite{bollobas1968weakly}. 
 Given graphs $H$ and $G$, let $n_H(G)$ denote the number of copies of $H$ in $G$. The $H$\textit{-bootstrap percolation process}
  ($H$\emph{-process} for short)
on a graph $G$ is the sequence $(G_i)_{i\geq 0}$ of graphs defined by $G_0 := G$ and for $i\geq 1$, $V(G_i) := V(G)$ and 
     \begin{equation*}
     E(G_i) := E(G_{i-1}) \cup \left\{e\in\binom {V(G)} 2 \ : \ n_H\left(G_{i-1}+e\right)>n_H(G_{i-1})\right\}.
               \end{equation*} 
 Informally, one can imagine a virus spreading in discrete time steps on the edges of the complete graph on $V(G)$. Initially just the edges of $G$ are infected. The graph $H$ governs the spread of the virus: any edge $e$ of $K_n$ which is the last uninfected edge in a copy of $H$ in $K_n$ gets infected as well.
 Once infected, edges remain infected forever. We call $G$ the \emph{starting graph} and $H$ the {\em infection rule} of the process.

Note that for finite $G$, 
the $H$-process on $G$  stabilises and we refer to $\tau_H(G):=\min\{t\in \N: G_t=G_{t+1}\}$, i.e., the number of rounds it takes for that to happen, as the \emph{running time} of the $H$-process on $G$. 
 We define $G_\tau$ with $\tau=\tau_H(G)$ to be the \emph{final graph}  of the process, and denote it $\final{G}_H:=G_\tau$. If eventually all edges get infected, that is, the final graph $\final{G}_H$ is the clique $K_{v(G)}$, we say that the starting graph $G$ is {\em $H$-percolating}.

  \begin{example}[The triangle] 
  \label{ex:triangle}
  Let $H=K_3$ and $G$ be an arbitrary $n$-vertex graph. Note first that no two components of $G$ become connected during the $K_3$-process as two non-adjacent vertices become adjacent during a round of the $K_3$-process if and only if they have a common neighbour already. Now if $x$ and $y$ are in the same component in $G_{i-1}$ then in the next round their distance along a shortest path is reduced to roughly half: $\dist_{G_i}(x,y) = \lceil \frac{1}{2} \dist_{G_{i-1}}(x,y)\rceil$. In particular, after $\lceil \log_2 (\dist_G(x,y)) \rceil$ rounds their distance is one and they are adjacent. Thus we have that $\tau_H(G)=\max_{x,y}\lceil \log_2 (\dist_G(x,y)) \rceil$ with the maximum taken over pairs of vertices $x,y$ in the same connected component of $G$,  and $\final{G}_H$ is a union of disjoint cliques on the vertex sets of the connected components.
  \end{example}

\subsection{Weak saturation} The graph percolation process was introduced by Bollob\'as~\cite{bollobas1968weakly} in 1968 for cliques, under the name  \textit{weak saturation}, and has since inspired a vast array of beautiful mathematics. In our terminology, Bollob\'as was interested in the smallest possible number of infected edges that percolate in the $K_k$-process. Subsequently this extremal question turned out to be greatly influential, as it inspired one of the first applications of algebra in extremal combinatorics.  
For an arbitrary graph $H$, $\ws(n,H)$ denotes the minimum number of edges in an $H$-percolating graph on $n$ vertices. 
The concept is motivated by work of Erd\H{o}s, Hajnal and Moon \cite{erdos1964problem}, who determined the smallest number of edges on $n$ vertices which $K_k$-percolate in a {\em single round}.    
Graphs that are $H$-free and $H$-percolate in one time step  are called \textit{$H$-saturated}. The minimum number of edges in an $H$-saturated graph is denoted by $\sat(n,H)$, whilst the maximum number is the {\em Tur\'an number} $\ex(n,H)$ of $H$. Clearly $\ws(n,H) \leq \sat (n,H)$. 
Considering Example \ref{ex:triangle} we see that $\ws(n,K_3)=\sat(n,K_3)=n-1$ as any tree  on $n$ vertices percolates with a star doing so in one round, whilst any graph with less than $n-1$  edges will not percolate.

\subsection{Percolation thresholds} Whilst weak saturation looks at the {\em smallest} possible starting graphs that $H$-percolate, the statistical physics origins of bootstrap percolation motivates the investigation of {\em random} initial infection graphs. 
Balogh, Bollob\'as, and Morris~\cite{balogh2012graph} initiated the study of the threshold probability for when the random graph $G(n,p)$ becomes $H$-percolating.
 Here $G(n,p)$ denotes the Erd\H os-R\'enyi binomial random graph which is obtained by taking every edge on $n$ vertices independently with probability $p=p(n)$. We say an event holds asymptotically almost surely (a.a.s.\ for short) in $G(n,p)$ if the probability it holds tends to 1 as $n$ tends to infinity. As the property of being $H$-percolating is monotone increasing, 
 the classical result of Bollob\'as and Thomason \cite{bollobas1987threshold} gives that there is some threshold $p_c=p_c(n,H)$, such that if $p= \omega(p_c)$ then a.a.s.\ $G(n,p)$ is   $H$-percolating    whilst if $p=o(p_c)$ then a.a.s.\ $G(n,p)$ is not $H$-percolating. Considering Example \ref{ex:triangle}, we see that $p_c(n,K_3)$ is precisely the threshold for connectivity in $G(n,p)$, namely $\log n/n$ \cite{erdos1959random}.

   \subsection{Maximum running times} 
Another relevant question concerning percolation processes  is {\em how long} the virus could be spreading for. 
The study of the corresponding extremal parameter, concerning the worst case behaviour, was suggested by Bollob\'as (cf.\  \cite{bollobas2017maximum}).
We define \[M_H(n):=\max\{\tau_H(G): G \mbox{ is an } n\mbox{-vertex graph}\},\]
 to be the \textit{maximum running time} for the $H$-process on $n$ vertices. Returning to our Example \ref{ex:triangle}, we see that determining $M_{K_3}(n)$ is equivalent to maximising the distance between two vertices in the same connected component of  an $n$-vertex graph $G$. This is achieved when $G$ is a path, giving $M_{K_3}(n)=\lceil \log_2 (n-1) \rceil$.

 \subsection{The content and the goal of this survey} 
In all three settings outlined above, investigations focus on understanding how the infection rule $H$ influences the extremal functions of interest: the weak saturation number $\ws(n,H)$, the percolation threshold $p_c(n,H)$, or the maximum running time $M_H(n)$. 
 The purpose of this survey is to explore exactly this, presenting the current progress and highlighting the gaps in our knowledge. Indeed, as in all good mathematical problems, there are many mysteries and a full understanding of  $H$-processes presents  a formidable   challenge.

We will look at both weak saturation and percolation thresholds in this context and report the foundational work as well as  recent exciting breakthroughs and key unanswered questions. However, this survey is mostly concerned with maximum running times. In particular, our primary goal is to give an exposition of our recent systematic exploration \cite{FMSz1, FMSz2, FMSz3} of the parameter $M_H(n)$.   
We present  general proof methods and discover a rich landscape of theorems with several surprises therein. Along the way we build connections to other topics in  extremal graph theory  as well as probabilistic and additive combinatorics. 
We intend  this survey to act as an invitation to the topic, collecting and organising the state of the art and highlighting plenty of tantalising open problems and conjectures!

\subsection{Organisation and notation}

Maximum running times were first studied for cliques by Bollob\'as, Przykucki, Riordan, and Sahasrabudhe~\cite{bollobas2017maximum} and Matzke~\cite{matzke2015saturation}, and subsequently by Balogh, Kronenberg, Pokrovskiy, and the third author~\cite{balogh2019maximum}. It is these works that we treat in Section~\ref{sec:cliques and chains}, describing the development of insights that lay the foundation for our results and highlighting the motivations and connections to key concepts already present in works on weak saturation and percolation thresholds. 
In Section~\ref{sec:K5} we include 
 a novel concise  proof of a lower bound of Balogh et al.~\cite{balogh2019maximum} on the running time of the $K_5$-process.
This introduces central facts and definitions and explains the connection between our problem and additive combinatorics, a link that was first utilised in~\cite{balogh2019maximum}. 
In Section~\ref{sec:H perc} we touch on a question that merges  the perspective of maximum running time and percolating graphs. Namely, we prove that for a broad family of infection rules $H$,  the slowest  (up to a constant factor) initial infection configuration can also be chosen to be $H$-percolating. 
In Section~\ref{sec:quad} we start presenting our results, putting them in context and introducing key graph properties that contribute to extremely slow running times. 
 Sections~\ref{sec:sublin} and \ref{sec:graph parameters} investigate what property of an infection rule causes the process to stabilise quickly for every starting graph. Finally, Sections~\ref{sec:bipartite-rules} and \ref{sec:discon} are devoted to 
bipartite and disconnected infection rules, respectively.

Given two graphs $H$ and $H'$, the graph $H\cap H'$  denotes the graph on vertex set $V(H)\cap V(H')$ with edge set $E(H)\cap E(H')$. The graph $H\cup H'$ is the graph with vertex set $V(H)\cup V(H')$ and whose edge set is $E(H)\cup E(H')$.  The notation $H\sqcup H'$ is used to emphasise when the union is a \textit{disjoint union}, i.e.\ when $V(H)\cap V(H')=\emptyset$. 
Throughout the survey we will often identify graphs with their edge sets.  

 \subsection{What is not included} To close this introduction, we briefly mention some highly relevant directions that are omitted in order to keep this survey (relatively) concise.
 Indeed, there are vast bodies of work that provide context or study variants to the topics covered here. Luckily, there are already fantastic surveys available, by Currie, Faudree, Faudree and Schmitt \cite{currie2012survey} for (weak) saturation, and by Morris for bootstrap percolation \cite{morris2017bootstrap} and monotone cellular automata in general \cite{morrismonotone}. 

\vspace{1mm}

 {\bf{Neighbour bootstrap percolation.}} 
 Besides its intrinsic mathematical interest, graph bootstrap percolation \cite{balogh2012graph} was inspired by the classical notion of $r$-neighbourhood bootstrap percolation. There, one initially infects some \textit{vertices} of a specific (possibly infinite) graph $G$ and at each time step, a vertex becomes infected if at least $r$ of its neighbours are infected. This is a monotone cellular automaton (\`a la von Neumann~\cite{neumann1966theory} and Ulam~\cite{ulam1952random}). The process  was introduced by Chalupa, Leath and Reich~\cite{chalupa1979bootstrap} on the $m$-regular infinite tree, as a simplified model of ferromagnets. Subsequently
 an immense amount of literature developed around this concept in combinatorics and probability theory \cite{cancrini2008kinetically,holroyd2003sharp,janson2012bootstrap} as well as statistical physics \cite{adler2003bootstrap,adler_comparison_1989,aizenman1988metastability}. 
 Analogously to the graph bootstrap model, there is great interest in when the model \textit{percolates} with both random starting configurations \cite{balogh2009bootstrap,balogh2009majority,balogh2010bootstrap,biskup2009metastable,cerf2002threshold}  and extremal starting configurations \cite{balogh2006bootstrap,balogh2010bootstrap,dukes2023extremal,morrison_extremal_2018} being studied.  Running times of neighbour bootstrap percolation processes have recently been addressed from an extremal perspective by Przykucki and Shelton \cite{przykucki2019smallest},  Przykucki \cite{przykucki2012maximal} and Benevides and Przykucki \cite{benevides2013slowly,benevides2015maximum} and when the starting configuration is random by  Bollob\'as, Holmgren,  Smith, and Uzzell \cite{bollobas2014time},  Bollob\'as,   Smith, and Uzzell \cite{bollobas2015time} and Balister,  Bollob\'as, and  Smith \cite{balister_time_2016}.

\vspace{1mm}

 {\bf{Hypergraphs.}}
 The study of  weak saturation  with respect to hypergraphs has  been an important topic \cite{erdHos1991saturated,pikhurko2001uniform,pikhurko2001weakly,tuza1988extremal}. For the case of threshold probabilities, Noel and Morrison \cite{morrison2021sharp} considered a variant of the $H$-process  for hypergraphs $H$ where only certain copies of $H$ are activated (with some probability), as part of a more general theorem  generalising work of Kor\'andi, Peled and Sudakov \cite{korandi2016random}. For maximum running times, hypergraph cliques $H$ have also been considered  by Noel and Ranganathan  \cite{noel_running_2022}, Espuny D\'iaz, Janzer, Kronenberg and Lada \cite{espuny_diaz_long_2022} and Hartarsky and Lichev \cite{hartarsky_maximal_2022}.
  Whilst we do not dwell on the topic of hypergraphs for maximum running times  here, we  believe that it would be interesting to explore it further.

\section{Cliques and chains} \label{sec:cliques and chains}

We begin by exploring the $K_k$-process for all integers $k\geq 3$. 

\subsection{Weak saturation for cliques}  \label{subsec:weak-cliques}
Erd\H{o}s, Hajnal and Moon \cite{erdos1964problem} proved that for cliques $K_k$ with $k\geq 3$, one has $\sat(n,K_k)=(k-2)(n-k+2) + {k-2 \choose 2}=\binom{n}{2}-\binom{n-k+2}{2}$ and the unique graph that achieves this minimum is  
$K_{k-2}\vee \bar{K}_{n-k+2}$, the disjoint union of a $(k-2)$-clique and an independent set of order $(n-k+2)$ with a complete bipartite graph in between. 
 
For $k=3$, this matches what we observed in  the introduction, and $\ws(n,K_3)=\sat(n,K_3)=n-1$. Bollob\'as \cite{bollobas1968weakly} proved that the weak saturation number and saturation number coincide for cliques $K_k$ with $3\leq k \leq 7$ and later also conjectured \cite{bollobas1978extremal} that there is equality for all values of $k$. 
The conjecture was proved by Kalai who gave two proofs, one appealing to the geometric notion of rigidity  \cite{kalaiweaksat} and one using exterior algebras \cite{kalai1985hyperconnectivity}. An elegant proof inferring it from the Skew Set-Pairs Inequality appears in a paper of Alon~\cite{alon1985extremal}. The inequality can be derived from the pioneering work of Lov\'asz~\cite{lovasz_flats_1977}, which introduced the use of exterior algebra into combinatorics, but was first stated and proved explicitly by Frankl~\cite{frankl1982extremal} (see also \cite{kalai-intersection, alon1985extremal}).
Despite the keen interest and multiple proofs, the determination of $\ws(n,K_k)$ remains one of the intriguing combinatorial problems that to this date does not have a purely combinatorial solution.

One indication for why this is the case is the large number of non-isomorphic extremal examples. 
Indeed, unlike the saturation problem, where $K_{k-2}\vee \bar{K}_{n-k+2}$ is the unique graph achieving $\sat(n,K_k)=\ws(n,K_k)$, when considering $K_k$-percolating graphs, as observed already by Bollob\'as \cite{bollobas1968weakly}, one can build many more. Indeed, to obtain further constructions one can  arbitrarily add a new vertex of degree $k-2$ to a $K_k$-percolating  graph. 
The following provides a $K_4$-percolating example with the minimum number of edges, that looks very different to $K_{2}\vee \bar{K}_{n-2}$. 

 \begin{figure}[h]
    \centering
    \includegraphics[width=0.7\linewidth]{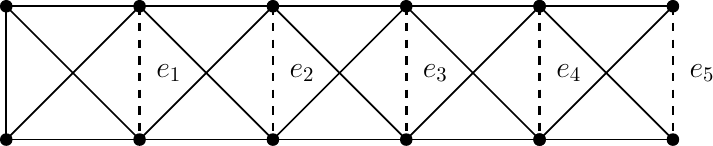}
    \caption{A simple $K_4$-chain.}
    \label{fig:k4chain}
    \end{figure}
\begin{Def}[A simple $K_k$-chain] \label{def:simple K4 chain}
Let $d\in \N$ and $n:=(k-2)d+2$. A \textit{simple $K_k$-chain} $(H_i,e_i)_{i=1}^d$ of \textit{length} $d$ is defined as follows. We take $H_1,\ldots,H_{d}$ to be a sequence of copies of $K_k$ such that for $1\leq i<j\leq d$ we have $V(H_i)\cap V(H_{j})$ empty unless $j=i+1$, in which case $V(H_i)\cap V(H_{i+1})$ has size two and  the edge at the intersection is  $e_i$. We let $e_{d}$ be an edge of $H_d$ disjoint from $e_{d-1}$ and we define $F_d$ to be the graph obtained by taking the union $\cup_{i=1}^dH_i$ after removing all the edges $e_i$.  
\end{Def}

 Note that for any even $n$, the graph $F_d$ with $d=n/2-1$  indeed provides an example of a $K_4$-percolating graph $G$ with $\ws(n,K_4)=2n-3$ edges.

\subsection{Percolation thresholds for cliques}
In their paper introducing the percolation threshold $p_c(n,H)$,  Balogh, Bollob\'as and Morris \cite{balogh2012graph} 
showed that for cliques of order $k\geq 3$ we have
\begin{equation} \label{eq:Kk threshold}  p_c(n,K_k)=n^{-1/{\lambda(K_k)}+o(1)},\end{equation}
where $\lambda(H):= \frac{e_H-2}{v_H-2}$ and the $o(1)$ in the exponent is hiding explicit poly-logarithmic factors in both the upper bound and lower bounds. For a sanity check we note that $\lambda (K_3)= 1$, as it should be to obtain the connectivity threshold in (\ref{eq:Kk threshold}).
In order to see where the exponent is coming from, consider the simple $K_k$-chains from Definition \ref{def:simple K4 chain}. Note that if an edge $e\in E(K_n)$ plays the role of $e_d$ for some copy of $F_d$ in $G(n,p)$ then certainly $e$ will be infected as one can see that the $K_k$-process on $F_d$ infects edge $e_i$ at time $i$ for $1\leq i \leq d$.  Now $n^{-1/\lambda(K_k)}$ is the threshold for when a fixed edge $e\in E(K_n)$ plays the role of a non-edge in a copy of $K_k^{--}$, the graph obtained from $K_k$ by removing a matching of size $2$. Indeed, the expected number of copies of $K_k^{--}$ that have $e$ playing the role of a non-edge is $n^{k-2}p^{e(K_k^{--})}$ which is constant at $p=n^{-1/\lambda(H)}$. Therefore, if $p$ is significantly above $n^{-1/\lambda(H)}$ (by some poly-logarithmic factors, say), then  \emph{any} $e\in E(K_n)$ plays the role of a non-edge in $K_k^{--}$, and we can string  together these copies to start to build some simple $K_k$-chain (or rather the resulting graph $F_d$) in reverse, that has $e$ as its final edge $e_d$. There are in fact many ways to do this and extending these to length $d\geq \log n$ say, we find one in which the final copy of $K_k^{--}$ added (which will be the first copy in the chain graph $F_d$) is in fact a copy of $K_k^-$ (that is, there is an extra edge) and so then we have a chain that causes the edge $e$ to be infected. With some more care one can apply the second moment method to  show that \textit{all} edges of $K_n$ play the role of $e_d$ in some copy of $F_d$ in $G(n,p)$ and will hence get infected.

There is a poly-logarithmic gap between the upper and lower bounds in \eqref{eq:Kk threshold}. For $K_4$ however, Balogh, Bollob\'as and Morris \cite{balogh2012graph} could determine $p_c(K_4)$ up to a constant. Subsequent work of Angel and Kolesnik \cite{angel2018sharp} and Kolesnik \cite{kolesnik2022sharp} established matching upper and lower bounds  respectively, giving a sharp threshold at $p_c(K_4)=1/\sqrt{3n\log n}$.  For larger cliques, Bartha and Kolesnik \cite{bartha2024weakly}  removed the poly-logarithmic factor from the upper bound in \eqref{eq:Kk threshold}. In a very recent breakthrough,  Bartha, Kolesnik, Kronenberg and Peled \cite{bartha2025sharp} announced the resolution of  a sharp threshold for $k\geq 5$ at $p_c(K_k)=(\gamma_k n)^{-1/\lambda(K_k)}$ for some explicit constants $\gamma_k$ that depend on \textit{Fuss-Catalan} numbers. Interestingly there is a change in behaviour for $k\geq 5$, evidenced by the lack of poly-logarithmic factors in the threshold.

\subsection{Maximum running times for cliques}\label{subsec:cliques} 

As we saw in the introduction, we have that $M_{K_3}(n)$ is logarithmic in $n$. 
The situation changes for $K_4$. Indeed, it is not difficult to see that the starting graph $F_d$ corresponding to a simple $K_4$-chain in Definition \ref{def:simple K4 chain} has running time $d=\frac{n}{2}-1$ and so $M_{K_4}(n)$ is at least linear in $n$. This construction is in fact a bit wasteful. 
Adding a new vertex to a percolating starting graph and connecting it to the endpoints of an edge that only gets infected in the last round, increases the running time by one. (This is in contrast with $F_d$, where the second endpoint of $e_i$ gets the same neighbours as the first, hence its addition does not increase the running time.)
Starting with $K_4$ minus an edge  and iterating leads to a construction with running time $n-3$. Bollob\'as, Przykucki, Riordan and Sahasrabudhe~\cite{bollobas2017maximum} and, independently, Matzke~\cite{matzke2015saturation}  showed that this  is best possible and  $M_{K_4}(n)=n-3$, for all $n\ge 3$. The upper bound here adopts the  idea of inductively ``growing'' a clique, showing that if the process does not stabilise by step $i$ then there  must be a clique of size $i+3$ in $G_i$.

For larger cliques $K_k$,  simple chains again give a linear lower bound but there is a key difference with the case of the $K_4$-chain. Indeed, for $k\geq 5$ the \textit{only} edges added in the $K_5$-process on $F_d$ are the edges $e_1,\ldots, e_d$ with $e_i$ added in the $i^{\text{th}}$ time step. This was not the case for $K_4$, where the process on $F_d$ percolates.  
The sparse final graph of the process on a simple $K_k$-chain gives hope that a long chain of copies of $K_k$ can somehow be made to wrap back onto itself whilst maintaining that each edge $e_i$ gets infected at time $i$.
Bollob\'as et al.\ \cite{bollobas2017maximum} and Matzke~\cite{matzke2015saturation} both managed to do this and achieve super-linear running times. 
This birthed the  concept 
of a ``chain construction'' which has since been the principal idea for deriving lower bounds for running times of $H$-processes. 
 We therefore introduce the relevant definitions in generality   allowing us to formalise and extend the ideas above.
Moreover, \textit{chains} as defined below also play a crucial role in deriving upper bounds (see Section \ref{sec:wheel}).

\begin{Def}[$H$-chains] \label{def:H-chain} Let $H$ be a connected graph, and let $\tau\geq 1$.
	An $H$\emph{-chain} of \emph{length} $\tau$ is a sequence $(H_i, e_i)_{i\in [\tau]}$ of  copies $H_i$ of $H$ together with edges $e_i\in E(H_i)$ such that 
	 $e_i \in E(H_i)\cap E(H_{i+1})$ for $i\in[\tau-1]$.
  	We call $G:=\cup_{i=1}^\tau H_i$ the \emph{underlying graph} of the $H$-chain and $G':=G-\{e_1,\ldots,e_\tau\}$ the \emph{starting graph} of the $H$-chain.
\end{Def}
Note that simple $K_k$-chains are certainly examples of $K_k$-chains  but Definition \ref{def:H-chain} is much more general as there is no restriction on how the copies of $H$ can intersect (other than the edges $e_i$ necessarily belonging to $H_i$ and $H_{i+1}$).

 Our aim  is to derive lower bounds on running times by showing that the edge $e_i$ is infected at time $i$ in the $K_k$-process on the starting graph of some $K_k$-chain. What extra conditions do we need for this?
For the edge $e_i$ to get infected at all, we do need some restriction on the intersections of edge sets of the graphs $H_i$.  
Also in order that edges $e_i$ are not infected too early in the $K_k$-process, we want that no other edge is infected.
Therefore we restrict the copies of $K_k^-$, the graph obtained by removing an edge from $K_k$. This leads to the following key lemma.

\begin{lemma}[A lower bound on running times for chains] \label{lem:chain lower}
Let $k\geq 5$ and $\tau\geq 1$ and suppose that $(H_i, e_i)_{i\in [\tau]}$ is a $K_k$-chain such that:
    \begin{enumerate}[\upshape \textbf{($\boldsymbol{\dagger}$)}]
    \item \label{(dag)}  For $1\leq i< j\leq \tau$, we have $E(H_i)\cap E(H_j)$  empty unless $j=i+1$, in which case the intersection is just $e_i$. 
    \end{enumerate} 
    \begin{enumerate}[\upshape \textbf{(*)}]
    \item \label{(*)} For any copy $F$ of $K_k^-$ in $G:=\cup_{i=1}^\tau H_i$, there is an $i\in [\tau]$ such that $F\subseteq H_i$.
    \end{enumerate} 
     Then if $G':=G-\{e_1,\ldots,e_\tau\}$ is  the starting graph of the chain, we have that  $\tau_H(G')\geq \tau$.
\end{lemma}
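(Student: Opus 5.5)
We prove by induction on $t$ that for every $0\le t\le \tau$ the $K_k$-process on $G'$ satisfies $G'_t = G'\cup\{e_1,\ldots,e_t\}$. Taking $t=\tau-1$ shows $e_\tau\notin G'_{\tau-1}$ while $e_\tau\in G'_\tau$, so the process has not stabilised before round $\tau$, i.e.\ $\tau_{K_k}(G')\ge\tau$. (Strictly we only need $e_\tau\notin G'_{\tau-1}$ together with $G'_{t}\neq G'_{t-1}$ for all $t\le\tau$, and the claimed identity gives both.) Throughout, every graph $G'_t$ with $t\le\tau$ lies inside $G$, which is what lets us use \ref{(*)}.

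\textbf{Base case and inclusion.} For $t=0$ the identity is trivial. Assume it holds for $t-1$, where $1\le t\le\tau$. The new edge $e_t$ is indeed infected in round $t$. Consider $H_t$. By \ref{(dag)}, the only edges of $H_t$ removed in forming $G'$ are $e_{t-1}$ (if $t\ge 2$) and $e_t$, since no other $e_j$ lies in $H_t$. The edge $e_{t-1}$ lies in $G'_{t-1}$ by induction. So $H_t - e_t\subseteq G'_{t-1}$, and adding $e_t$ creates a new copy of $K_k$. We also need $e_t\notin G'_{t-1}$: the $e_j$ are distinct because $e_j\in H_{j+1}$ and \ref{(dag)} forbids repeated edges among non-consecutive copies (for consecutive ones, $e_i\neq e_{i+1}$ also follows from \ref{(dag)}, since $e_{i+1}\notin H_i$ when $i+2\le\tau$ and otherwise by the chain definition). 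Moreover $e_t\notin G'$ by construction.

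\textbf{Reverse inclusion (main step).} Suppose an edge $f\notin G'_{t-1}$ is added in round $t$. Then there is a copy $K$ of $K_k$ with $f\in K$ and $K-f\subseteq G'_{t-1}\subseteq G$. Thus $F:=K-f$ is a copy of $K_k^-$ in $G$. By \ref{(*)}, $F\subseteq H_i$ for some $i$, and since $H_i$ is a $K_k$ on the same vertex set, $f\in H_i$. Hence $f$ is an edge of $G$ not in $G'_{t-1}$, so $f=e_j$ for some $j\ge t$. Now $F=H_i-e_j$ must lie in $G'_{t-1}=G'\cup\{e_1,\ldots,e_{t-1}\}$, and $e_j\in H_i$ forces $i\in\{j,j+1\}$ (by \ref{(dag)}, $e_j$ lies only in $H_j$ and $H_{j+1}$).

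If $i=j+1$, then $H_i-e_j$ contains $e_{j+1}$, which is not in $G'_{t-1}$ since $j+1>t-1$. This is a contradiction, provided $j+1\le\tau$, which holds because $i\le\tau$.

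If $i=j$ and $j>t$, then $H_j-e_j$ contains $e_{j-1}$ with $j-1\ge t$, which is missing from $G'_{t-1}$; again a contradiction. Hence $j=t$, so $G'_t=G'\cup\{e_1,\ldots,e_t\}$.

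The only delicate point is this last step: checking that \ref{(*)} forces any infecting copy to be one of the $H_i$, and that \ref{(dag)} pins down which $e_j$ can appear in it. That bookkeeping is exactly what rules out early infections. Note that the argument uses \ref{(*)} for the subgraph $F$ of $G$, which is why keeping all $G'_t\subseteq G$ matters. The hypothesis $k\ge5$ enters only through \ref{(*)} being satisfiable; it is not used directly in the argument.
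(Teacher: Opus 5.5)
Your argument is correct and is essentially the paper's proof: the same induction $G'_t=G'\cup\{e_1,\dots,e_t\}$, with condition $(*)$ forcing any copy of $K_k^-$ completed in round $t$ into some $H_i$, and condition $(\dagger)$ then ruling out everything except $f=e_t$. The only difference is bookkeeping: you split cases by which $e_j$ equals $f$, whereas the paper splits by the index of the copy $H_i$ that contains $F$.

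One small quibble: $e_{\tau-1}\neq e_\tau$ does not actually follow from the definition of an $H$-chain, contrary to your parenthetical. The paper's proof tacitly assumes it too, so this is a shared implicit hypothesis rather than a gap in your argument.
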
 
\begin{proof} 
Setting $G_0=G'$ to be the starting graph, we show by induction that $G_i=G_{0}+\{e_1,\ldots,e_i\}=G_{i-1}+e_i$. The base case $i=0$ follows from the definition. For $i\geq 1$, supposing that the claim holds for previous $i$, we have that $H_i-e_{i}\subseteq G_{i-1}$ and so certainly $e_i$ is added to $G_i$. To see that no other edge is added, suppose  an edge $f\in G_i\setminus G_{i-1}$ completes a copy $F\subseteq G_{i-1}$ of $K_k^-$ to a copy of $K_k$ at time $i$. As $G_{i-1}\subseteq G$ by the induction hypothesis, property \ref{(*)} gives that  $F\subseteq H_j$ for some $j\in[\tau]$. As $f\notin G_{i-1}$, we have  $j\geq i$. If $j>i$, then $H_j\cap G_{i-1}=H_j-\{e_{j-1},e_j\}$ by the induction hypothesis and condition \ref{(dag)}.  
Therefore there are not enough edges in $H_j\cap G_{i-1}$ to contain $F$ and so we must have  $j=i$,  $F\subseteq H_i$ and $f=e_i$. 
\end{proof}

Simple $K_k$-chains, by construction, satisfy condition \ref{(dag)} for every $k\geq 3$. It is also easy to see that \ref{(*)} holds when $k\geq 5$.
\begin{obs} \label{obs:simple K5}
Suppose that $d\in \N$ and $(H_i,e_i)_{i\in [d]}$ is a simple $K_k$-chain, for some $k\geq 5$. If $F$ is some copy of $K_k^-$ in $G:=\cup_{i\in[d]}H_i$, then there is some $i\in [d]$ such that $F\subseteq H_i$. 
\end{obs}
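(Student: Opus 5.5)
The approach is to show that any set $S$ of $k$ vertices spanning at least $\binom{k}{2}-1$ edges of $G$ must be $V(H_i)$ for some $i$. Then $F\subseteq H_i$ follows: $F$ is a subgraph of $G[S]$, and if $S=V(H_i)$ then $G[S]=H_i$, since $H_i$ is already complete on $S$. So it suffices to prove that $S$ lies inside a single $V(H_i)$. The key structural fact about the simple chain is that $G$ is a ``path of cliques''. The vertex sets $V_i:=V(H_i)$ are consecutively glued along the two endpoints of $e_i$, and non-consecutive ones are disjoint. Every edge of $G$ lies inside some $V_i$. Moreover, for each $1\le i\le d-1$, the pair $V(e_i)$ is a vertex cut separating $\bigcup_{j\le i}V_j\setminus V(e_i)$ from $\bigcup_{j>i}V_j\setminus V(e_i)$.

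Suppose $S$ is not contained in any single $V_i$. Let $a$ be minimal and $b$ maximal with $S\cap V_a\ne\emptyset$ and $S\cap V_b\ne\emptyset$, so $a<b$. Choose the cut $e_i$ with $a\le i<b$ so that both sides contain vertices of $S$ outside $V(e_i)$. Such an $i$ exists because otherwise $S$ would sit in one clique: take the smallest $i$ such that $S\setminus V(e_i)$ is not entirely on the left side. Write $S=A\sqcup C\sqcup B$, where $C=S\cap V(e_i)$ with $|C|\le 2$, and $A$ and $B$ are the nonempty parts of $S$ on the two sides. There are no edges of $G$ between $A$ and $B$, so at least $|A||B|$ pairs of $S$ are non-edges of $G$.

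Now we count. Since $|A|+|B|=k-|C|\ge k-2\ge 3$ and $|A|,|B|\ge1$, we get $|A||B|\ge |A|+|B|-1\ge k-3\ge 2$ when $k\ge5$. So $S$ misses at least two edges, which contradicts $S$ spanning $\binom k2-1$ edges. Hence $S\subseteq V_i$ for some $i$, and since $|S|=k=|V_i|$ we get $S=V_i$. Also $G[V_i]=H_i$: any edge of $G$ inside $V_i$ lies in some $H_j$, and for $j\ne i$ we have $|V_i\cap V_j|\le 2$, so such an edge is $e_{i-1}$ or $e_i$, both of which are in $H_i$. Therefore $F\subseteq H_i$.

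The main obstacle is rigorously establishing the cut property and the choice of a separating $e_i$. I would handle this by induction along the chain. For vertices $u\in V_j\setminus V(e_i)$ with $j\le i$ and $w\in V_l\setminus V(e_i)$ with $l>i$, every edge lies inside one $V_m$, so an edge $uw$ would require a common clique. The vertex $u$ only lies in cliques with index $\le i$, and $w$ only in cliques with index $>i$. The only vertices shared between $V_i$ and $V_{i+1}$ are the endpoints of $e_i$, and the disjointness of non-consecutive cliques gives this index bookkeeping. The inequality $|A||B|\ge2$ is exactly where $k\ge5$ is used; for $k=4$ one could have $|C|=2$ and $|A|=|B|=1$, leaving only one missing edge.
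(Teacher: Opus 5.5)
Your proof is correct and follows the paper's argument: if $F$ lies in no $H_i$, then the two endpoints of some $e_j$ separate $V(F)$. Your count $|A||B|\ge k-3\ge 2$ of missing pairs is exactly the check, left implicit in the paper, that deleting two vertices cannot disconnect $K_k^-$ when $k\ge 5$.

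One wording slip in choosing the separating $e_i$: take the smallest $i$ such that $S$ has a vertex strictly on the \emph{left} of $e_i$. Your stated rule, the smallest $i$ with $S\setminus V(e_i)$ ``not entirely on the left'', usually just returns $i=1$. With the corrected choice, either $S$ also meets the right side of $e_i$, or $S\subseteq V(H_i)$.
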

Indeed, if $F$ was not contained in any $H_i$ then there would be a choice of $e_j$ such that removing the vertices of $e_j$ disconnects $F\cong K_k^-$, a contradiction. Note that this does \textit{not} hold in the case of $K_4$.

Chains satisfying the hypothesis of Lemma \ref{lem:chain lower} are what Bollob\'as, Przykucki, Riordan and Sahasrabudhe \cite{bollobas2017maximum} call \textit{good chains} and they used them to give lower bounds on $M_{K_k}(n)$ for $k\geq 5$. Matzke \cite{matzke2015saturation} also uses chains and gave an explicit construction of a good chain showing that $M_{K_k}(n)\geq \Omega(n^{3/2})$ for all $k\geq 5$.  Bollob\'as et al.~\cite{bollobas2017maximum} went  further, showing that 
\begin{equation} \label{eq:Kkfirstlower} M_{K_{k}}(n)\geq n^{2-\frac{1}{\lambda(K_k)}-o(1)},\end{equation}
with a random construction.  They build their chain by choosing one $H_i$ at a time, taking a uniformly random choice for the next $k-2$ vertices to complete  the vertices of $H_i$ and then taking $e_i\in H_i-e_{i-1}$ also as a random choice.  The exponent in \eqref{eq:Kkfirstlower} matches that in \eqref{eq:Kk threshold} and this is no coincidence. Indeed, the authors of \cite{bollobas2017maximum} show that in terms of unwanted (that is, not contained in some $H_i$) copies of graphs $F\subseteq K_k$ in $G=\cup_{i}H_i$, the graph $G$ resembles $G(n,p)$ of the same edge density. Therefore if a typical edge $e\in E(K_n)$ lies in some (unwanted) copy of $K_k^{--}$ in $G$, we can no longer use $e$ as it would create an unwanted copy of $K_k^-$. Thus the process for building the random chain lasts until a typical edge is no longer usable.

The exponents in \eqref{eq:Kkfirstlower} tend to 2 as $k$ tends to infinity, and as sketched above, are optimal with respect to a random construction of a good chain. 
Bollob\'as, Przykucki, Riordan and Sahasrabudhe \cite{bollobas2017maximum} contemplated that while it is ``tempting'' to think that their construction is also optimal in general, they had no real reason to believe this. Instead, motivated by the lack of non-trivial upper bounds in the problem, they opted for conjecturing only $M_{K_k}(n)=o(n^2)$, that is, that no $K_k$-process should be able to add a constant fraction of the edges of $K_n$ in distinct rounds.  
Surprisingly, even this more modest prediction turned out not to hold for $k\geq 6$. Balogh, Kronenberg, Pokrovskiy and  the third author~\cite{balogh2019maximum} gave an explicit construction of a starting graph with quadratic running time, hence proving that $M_{K_k}(n)=\Theta(n^2)$ for all $k\ge 6$. Theirs was also a chain construction but with a different flavour. In order to get longer running times, intuitively one needs the chains to intersect more significantly. One key observation of~\cite{balogh2019maximum} is that the condition \ref{(dag)} in Lemma \ref{lem:chain lower} can be  substantially weakened. Indeed, we do not really care if distinct $H_i$ and $H_j$ with $j>i+1$ intersect, as long as the edges $e_{j-1},e_j$ are not added before they should be. Thus  \ref{(dag)} can be replaced by 
 \begin{enumerate}[\upshape \textbf{($\boldsymbol{\dagger}'$)}]
    \item \label{(dag')}  For $1\leq i< j\leq \tau$, we have $e_j\notin E(H_i)$.  
    \end{enumerate} 
 One can check that the proof of Lemma \ref{lem:chain lower} goes through with this weaker condition.

 In~\cite{balogh2019maximum} this observation was exploited with the construction of a \textit{ladder chain} which we now sketch for the case of $H=K_6$. In fact, the construction we see  here is a slight variation of the one from~\cite{balogh2019maximum} and we instead follow  \cite{FMSz3}
 which generalised the construction to other $H$ (and introduced the terminology used here).  We will return to highlight the difference at the end of the following section  after seeing chain constructions in detail.

  The  idea  is to superimpose linearly many simple chains on the same vertex set. Each of these simple chains $(H^a_i,e_i^a)_{i\in [\tau_0]}$ will have some linear length, say  $\tau_0=\floor{n/8}$.  These can then be linked together via disjoint simple chains of constant length to form  one long continuous chain which will have quadratic length (we will see in the next section the details of such a linking procedure).  
  
  To construct the linearly long simple chains, we start by considering a bipartition of the vertices into two \textit{sides} which we think of as the \textit{left} side and a \textit{right} side. On each side we place a sequence of $2\tau_0$ copies of $K_3$, with consecutive copies intersecting in a singular vertex. Using these triangles, for  any \textit{slope} $a=0,1,\ldots,\tau_0$, we can consider a chain $(H^a_i,e^a_i)_{i\in [\tau_0]}$ which matches the  triangle in position $j$ on the left to the triangle in position $j+a$ on the right and places a complete bipartite graph between them (see Figure \ref{fig:ladderchain}). We will take an appropriate collection $A$ of slopes that will define our family of simple chains.

 \begin{figure}[h]
    \centering
    \includegraphics[trim=0 8.2cm 0 6.4cm,clip, width=\linewidth]{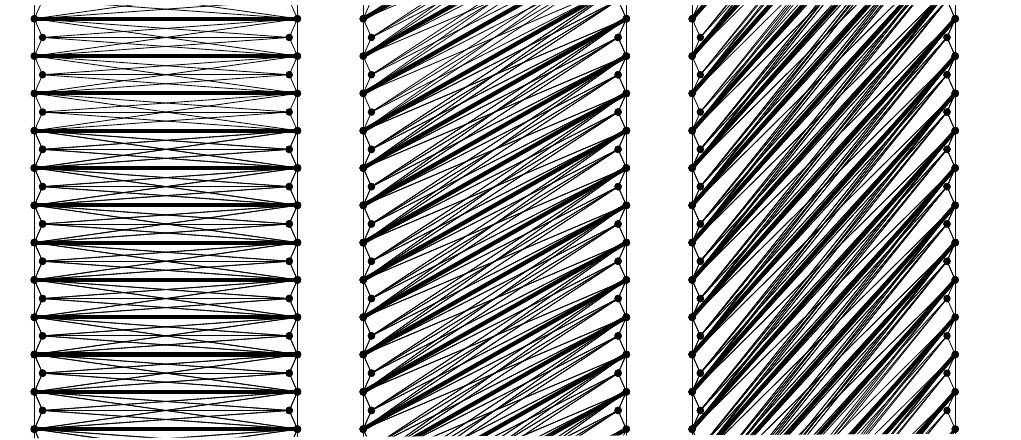}
    \caption{Sections of simple $K_6$-chains with slopes 0 (left), 4 (middle) and 8 (right). The edges $e_i$ of the simple chains are bold.}
    \label{fig:ladderchain}
    \end{figure}

Crucial to the  analysis of this chain and establishing the key condition \ref{(*)} of Lemma \ref{lem:chain lower} is the fact  that the graph  $G:=\cup_{a\in A,i\in [\tau]} H^a_i$  induces a sequence of triangles on both the left and right side. This in turn forces any copy $F$ of $K_6^-$ in $G$ to use three vertices on the left and three vertices on the right.  
Using this, one can show  that if  $F$ contains edges of the underlying graph $\cup_{i\in [\tau_0]}H^a_i$ for different slopes $a\in A$, then two of these must have difference at most three. We can avoid this by choosing only every fourth integer into $A$.

Interestingly, the construction  does not work for $K_5$. Indeed, the naive generalisation would place a sequence of triangles on the left side as before, and a path on the right side. For any pair of simple $K_5$-chains with slopes $a\neq a'$ using this bipartition, one can find copies of $K_5^-$ using one triangle on the left and edges corresponding to the different slopes $a,a'$ for the remaining vertices $v,v'$ of the copy. If we try to include these two simple chains as segments of a long chain, this will lead to unwanted copies of $K_5^-$, contradicting \ref{(*)}.  However with an alternative, more subtle approach the same authors \cite{balogh2019maximum} achieved almost quadratic time: \[M_{K_5}(n)\ge  n^{1-O(1/\sqrt{\log n})}=n^{2-o(1)}.\] 
In fact their construction gives that $M_{K_5}(n)\geq n\cdot r_3(n)$ where $r_3(n)$ is the size of a largest subset of $[n]$  free of three-term arithmetic progressions. The famous construction of  Behrend~\cite{behrend1946sets} gives that $r_3(n)\geq n^{1-O(1/\sqrt{\log n})}$, whilst it is well-known that $r_3(n)=o(n)$, as was originally shown by Roth \cite{roth_certain_1953} in 1953.  

We close this section with what is probably the most pertinent outstanding question in the pursuit of maximum running times, namely determining the asymptotics of $M_{K_5}(n)$.

\begin{conj}\label{conj:K5}
 [Bollob\'as-Przykucki-Riordan-Sahasrabudhe~\cite{bollobas2017maximum}] The maximum running time for infection rule $K_5$ is $M_{K_5}(n)=o(n^2)$. 
\end{conj}

\section{Proof highlight: $M_{K_5}(n)$ lower bound} \label{sec:K5}

   In this section, we prove a lower bound for $M_{K_5}(n)$ of the form $n^{2-o(1)}$. This was originally proved by Balogh et al.~\cite{balogh2019maximum},
    using a construction similar to the \textit{ladder chains} discussed in the previous section, although it uses a tripartition and slopes defined by Behrend's set free of arithmetic progressions. Our proof here uses \textit{dilation chains}~\cite{FMSz3} and is quite distinct, even though it also relies on  Behrend's construction.    
    In \cite{FMSz3}, we developed a theory of chain constructions (and in particular the dilation chains which we see here) to give general results that apply to a wide array of infection rules, see for example Section \ref{sec:chainsinsection5}. In this section, we  introduce  the key ideas of chain constructions and dilation chains in the context of $K_5$, where many technicalities can be avoided.

As with the ladder chains of the previous section, we will first superimpose linearly many simple chains on the same vertex set, each of linear length. Note that each of the chains $(H_i^a,e_i^a)_{i\in [\tau_0]}$ there can be obtained from the chain $(H^0_i,e^0_i)_{i\in [\tau_0]}$ with slope $0$ by leaving the left side fixed and translating the vertices on the right side by  $a$ triangles. The main difference for dilation chain constructions will be the way we choose the permutation that ``mixes up'' the vertices of a simple chain to obtain other simple chains in our family. The vertex set will be labelled by the elements of $\Z_p \setminus \{ 0\}$, which is equipped with nicely interacting additive and multiplicative operations. In the basic simple chain $(H^1_i,e^1_i)_{i\in [\tau_0]}$ the copies of $K_5$s will be laid out one after  the other, following the additive structure of $\Z_p \setminus \{ 0\}$. The family of simple chains $(H^a_i,e^a_i)_{i\in [\tau_0]}$ will be created using the multiplicative operation: the vertices of the basic simple chain are multiplied by a constant $a\in \Z_p \setminus \{ 0\}$. 
We  will again find that if we choose the set $A$ of dilation constants $a$ appropriately, then there will be no unwanted copy of $K_5^-$ when we superimpose the corresponding simple chains. Let us see this idea formally.

\subsection{The construction}  
Let $p\in\N$ be a prime and define $\tau_0:=\lfloor\frac{p-3}{3}\rfloor$. Let $A:=\{a_1,\ldots,a_q\}\subseteq \Z_p\setminus \{0\}$ be a subset of $q$ dilations. 

\vspace{1mm}

{{\bf Dilation chains.}} 
For an element $a \in \Z_p \setminus \{ 0\}$, the {\em dilation $K_5$-chain} $(H^a_i,f^a_i)_{i\in [\tau_0]}$ is defined on the vertex set $W=\{w_i : i \in \Z_p \setminus\{ 0\}\}$ such that 
\begin{equation}  V(H^a_i)=\{w_{a(3i-2)},w_{a(3i-1)},w_{a(3i)},w_{a(3i+1)},w_{a(3i+2)}\} \end{equation} 
and $f^a_i = w_{a(3i+1)}w_{a(3i+2)}$ for $i=[\tau_0]$. We also define $f^a_0=w_aw_{2a}$.

The chain $(H^1_i,f^1_i)_{i\in [\tau_0]}$ is clearly simple and the others are obtained from it by multiplying the indices of the vertices by $a$. 
Since $p$ is chosen to be a prime, the map $x\mapsto ax$ is a bijection of $\Z_p \setminus \{ 0 \}$ to itself so $(H^a_i,f^a_i)_{i\in [\tau_0]}$ is also a simple $K_5$-chain.

\vspace{1mm}
    
{\bf{Linking chains.}} For each element $a_j\in A$, the dilation chain $(H_i^{a_j},f_i^{a_j})_{i\in [\tau_0]}$ will feature as subchains of our final chain $(H_i,e_i)_{i\in[\tau]}$. For ease of notation we use $(H_i^j,f_i^j)_{i\in [\tau_0]}$ instead.
In order to link these simple chains up, we introduce short simple $K_5$-chains $(L_i^j,g_i^j)_{i\in [3]}$ for $j\in [q-1]$, each of length 3.
The $j^{\text{th}}$ linking chain will link the last $K_5$ of the $a_j$-dilation chain to the first $K_5$ of the $a_{j+1}$-dilation chain.
That is, two vertices of $L_1^j$ (disjoint from $g_1^j$) will be the vertices of the edge $f^j_{\tau_0}$ and $g_{3}^j$ will coincide with $f^{j+1}_0$, see Figure \ref{fig:linkchain}. All other 7 vertices of $U_j:=\cup_{i\in [3]}V(L_i^j)$ will be disjoint from $W$ and also disjoint from $U_{j'}\setminus (W\cap U_{j'})$ for  choices of $j'\neq j$. 

\begin{figure}[h]
    \centering
    \includegraphics[width=0.8\linewidth]{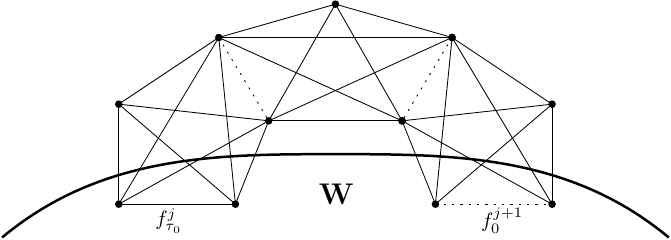}
    \caption{A linking $K_5$-chain $(L_i^j,g_i^j)_{i\in [3]}$ with the edges $g_i^j$ as dashed lines.}
    \label{fig:linkchain}
    \end{figure}

Note that in order to have that $(L_i^j,g_i^j)_{i\in [3]}$ is a well-defined simple $K_5$-chain, we need that $f^j_{\tau_0}\in E(L_1^j)$ and $g_{3}^j=f^{j+1}_0\in E(L_{3}^j)$ are vertex-disjoint in $W$. We will ensure this when we select the set $A$ of dilations. 

\vspace{1mm}

{\bf{Bringing everything together.}} Finally, we define our chain $(H_i,e_i)_{i\in [\tau]}$ which is simply obtained by concatenating the sequences, starting with $(H^1_i,f^1_i)_{i\in [\tau_0]}$ followed by $(L^1_i,g^1_i)_{i\in [3]}$ and then $(H^2_i,f^2_i)_{i\in [\tau_0]}$ and so on, alternating between dilation chains and linking chains, until we finish with the sequence $(H^q_i,f^q_i)_{i\in [\tau_0]}$. This  defines a $K_5$-chain which has length  
$\tau:=\tau_0q+3(q-1)$ and lies on $V=W\cup \left(\cup_{j\in [q-1]}U_j\right)$ which has size $|V|=p-1+7(q-1)$.

\subsection{The set of dilations.}
In this section, we collect properties of the set $A$ that we will need for our dilation chain construction above to work. 

First of all, as we promised above, we make sure that $f^j_{\tau_0}\cap f^{j+1}_0 = \emptyset$ for every $j\in [q-1]$, so the $j^{\text{th}}$ linking chain is simple. Furthermore, in order to use Lemma \ref{lem:chain lower} for the running time of $(H_i,e_i)_{i\in[\tau]}$, we need to establish properties \ref{(*)} and \ref{(dag)} for it. Both of these depend on certain restrictions about how the edges of different dilation chains interact with each other.
At the heart of the proof of all of these lies a general observation, relating the edges of the graph $G^j:=\cup_{i\in [\tau_0]}H^j_i$ of the $j^{\text{th}}$ dilation chain to the existence of a congruence modulo $p$.

\begin{obs} \label{obs:edges dil}
If $w_xw_y\in E(G^j)$ for $x,y\in [p-1]$ and $j\in [q]$, then there is some $\alpha\in \Z\setminus \{0\}$ with $|\alpha|\leq 4$  such that
\[x-y=\alpha a_j \mbox{ mod } p. \]
\end{obs}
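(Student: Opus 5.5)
The plan is a direct check from the definition of the dilation chain. Any edge $w_xw_y$ of $G^j=\cup_{i\in[\tau_0]}H^j_i$ lies inside a single copy $H^j_i$, since $G^j$ is by definition a union of cliques. So it suffices to understand the index differences within one $H^j_i$.

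The vertex set of $H^j_i$ is $\{w_{a_j(3i-2)},\ldots,w_{a_j(3i+2)}\}$. Here the subscripts are elements of $\Z_p\setminus\{0\}$, identified with representatives in $[p-1]$. Hence there are integers $s,t\in\{3i-2,3i-1,3i,3i+1,3i+2\}$ with $x\equiv a_js$ and $y\equiv a_jt \pmod p$.

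Subtracting gives $x-y\equiv (s-t)a_j \pmod p$. Set $\alpha:=s-t$. Because $s$ and $t$ both lie in a window of five consecutive integers, $|\alpha|\le 4$.

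It remains to see $\alpha\neq 0$. The vertices $w_x$ and $w_y$ are distinct, as they are the endpoints of an edge, so $x\not\equiv y$. Since $x\equiv a_js$ and $y\equiv a_jt$, this forces $s\neq t$, that is, $\alpha\neq 0$. (Equivalently, multiplication by $a_j\neq 0$ is injective on $\Z_p$, and the five indices are distinct mod $p$ because $3i+2\le 3\tau_0+2\le p-1$.)

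There is no real obstacle here. The only point needing care is bookkeeping: the subscripts are read mod $p$, while $\alpha$ is taken as an honest integer difference of the unreduced multipliers $s,t$, not of $x$ and $y$ themselves.
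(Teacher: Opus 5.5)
Your proposal is correct and follows essentially the same argument as the paper. You place the edge inside a single $H^j_i$, write both indices as $a_j(3i+\ell)$ with $\ell\in\{-2,\dots,2\}$, and take $\alpha$ to be the difference of the two offsets, which is nonzero because the endpoints are distinct. Your extra remark that $3i+2\le 3\tau_0+2\le p-1$, so the five indices are distinct mod $p$, is a valid detail the paper leaves implicit.
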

\begin{proof}
As $w_xw_y\in E(G^j)$, there is some  $i\in[\tau_0]$ with $w_x,w_y\in V(H^j_i)= \{w_{a_j(3i-2)},\ldots,w_{a_j(3i+2)}\}$. 
Therefore, there are some distinct $\ell_x,\ell_y\in \{-2,-1,0,1,2\}$ such that $x=a_j(3i + \ell_x)$ and $y=a_j(3i +\ell_y)$ modulo $p$. Then  $x-y=\alpha a_j$ modulo $p$, where $\alpha= \ell_x-\ell_y\neq 0$. 
\end{proof}

The properties we need for our main proof forbid that edges from two or three different dilation chains form certain small subgraphs in $G$. The above observation translates these subgraphs to certain linear equations modulo $p$, in at most three variables, having a solution from $A$. 
This leads us to the next theorem, based on the famous construction of Behrend avoiding $3$-term arithmetic progressions. It gives us large subsets $A$ of $\Z_p$ which do not have non-trivial solutions to \textit{any} of the linear equations we will care about. 

\begin{theorem}[Behrend \cite{behrend1946sets}] \label{lem:beh stre}
    There exists $C'>0$ such that for any prime $p\in \N$ there is some set $A\subset \Z_p\setminus \{0\}$ such that $|A|\geq p^{1-C'/\sqrt{\log p}}$ and if \[\alpha_1a_1+\alpha_2a_2+\alpha_3a_3=0 \mbox{ mod } p\]
    for some $a_i\in A$ and $\alpha_i\in \Z$ with $|\alpha_i|\leq 4$    for $i\in [3]$ then $\sum_{i=1}^3\alpha_i=0$ and $a_i=a_j$ for all $i,j\in [3]$ with $\alpha_i,\alpha_j\neq 0$. 
\end{theorem}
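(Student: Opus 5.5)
We follow Behrend's construction and transfer it to $\Z_p$ by working inside an interval that is short enough that no wraparound modulo $p$ can occur. Fix a dimension $d$ and a base $m$, and let $B$ be the set of integers whose base-$m$ expansion $x=\sum_{t=0}^{d-1} x_t m^t$ has all digits $x_t$ in $\{0,1,\ldots,\lfloor m/C\rfloor\}$, for a suitable absolute constant $C$ (say $C=20$). Among these, fix a value $r$ of $\sum_t x_t^2$ and set $B_r=\{x\in B:\sum_t x_t^2=r\}$. By pigeonhole over the at most $d(m/C)^2$ possible values of $r$, some $B_r$ has size at least $(m/C)^d/(d m^2/C^2)$. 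We take $m^d\approx p/10$, and we take $A$ to be $B_r$ shifted by $1$ if necessary so that $0\notin A$, viewed inside $\Z_p$. Choosing $d\approx\sqrt{\log p}$ gives $|A|\ge p\,C^{-d}m^{-2}d^{-1}\ge p^{1-C'/\sqrt{\log p}}$, which is the standard Behrend calculation.

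Next we lift the congruence to an equation over $\Z$ and then to an equation digit by digit. Suppose $\alpha_1a_1+\alpha_2a_2+\alpha_3a_3\equiv0 \pmod p$ with $|\alpha_i|\le4$. The shift by $1$ needs care first. Writing $a_i=b_i+1$, the equation becomes $\sum\alpha_ib_i+\sum\alpha_i\equiv0$. So it is cleaner to avoid the shift altogether: instead exclude $0$ from $B_r$, losing at most one element, which is harmless. With $a_i\in[0,p/10]$ we have $|\sum\alpha_ia_i|\le12\cdot p/10<2p$. This still allows the values $\pm p$, so we shrink the interval to $m^d\le p/13$. Then $|\sum_i\alpha_ia_i|<p$, and the congruence forces $\sum\alpha_ia_i=0$ in $\Z$. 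Each digit satisfies $|\sum_i\alpha_i a_{i,t}|\le 12\lfloor m/C\rfloor<m$, so there are no carries, and base-$m$ uniqueness gives the vector identity $\sum_i\alpha_i\mathbf a_i=\mathbf 0$ in $\Z^d$. Here $\mathbf a_i$ is the digit vector of $a_i$, and all the $\mathbf a_i$ lie on the sphere $\|\mathbf x\|^2=r$.

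The main step is to show that a vanishing integer combination of at most three points on a sphere, with small coefficients, is trivial. Group equal points together, so that we may assume the distinct points $\mathbf a_i$ carry nonzero combined coefficients $\beta_i$, and that there are at most three of them. If there is one point, then $\beta_1\mathbf a_1=0$ with $\mathbf a_1\neq0$, since $r>0$ because we excluded $0$. This is impossible, so the original coefficients on that point sum to zero, which is exactly the required conclusion. If there are two points, then $\mathbf a_1$ and $\mathbf a_2$ are parallel. Having equal nonnegative norm and nonnegative coordinates, they must be equal, which is a contradiction. If there are three points, we must rule out $\beta_1\mathbf a_1+\beta_2\mathbf a_2+\beta_3\mathbf a_3=0$ for distinct points on a sphere. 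We expect this case to be the real obstacle.

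The three-point case can genuinely occur. Three distinct points on a circle in a plane through the origin can be linearly dependent, and the obstacle is whether the coefficients can be small integers. Strict convexity rules out the configurations with one sign pattern. If, say, $\beta_3=-(\beta_1+\beta_2)$ with $\beta_1,\beta_2>0$, then $\mathbf a_3$ is a convex combination of $\mathbf a_1$ and $\mathbf a_2$, and this contradicts strict convexity of the sphere. The other patterns, where $\sum\beta_i\neq0$, are not excluded by convexity alone. To handle them, we first observe that the theorem's conclusion includes $\sum\alpha_i=0$, and we want to impose this via the digit structure. One option is to add an extra coordinate: require every element of $A$ to have a fixed leading digit block equal to the constant $1$, placed in an extra digit position $d$ (that is, add $m^d$ to every element). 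That coordinate then gives $\sum_i\alpha_i=0$ directly. Once $\sum\beta_i=0$, every three-point relation has the pattern of two coefficients of one sign against one of the other sign, so it is ruled out by the convexity argument above. This remains a plan: the extra-digit trick still needs checking, in particular that the bound $m^{d+1}\le p/13$ is kept and that $0\notin A$ holds automatically.
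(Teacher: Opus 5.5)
Your argument is correct, and it is the standard adaptation of Behrend's sphere construction that the paper relies on; the paper itself gives no proof, only citing Behrend and noting, with references to Ruzsa and Shapira, that the construction extends to bounded-coefficient equations in three variables. The extra-digit step you left open does check out: with digit $1$ in position $d$ and $m\ge 20$, every element lies in $[m^d,2m^d)$, so $0\notin A$ automatically. Since $2m^d\le m^{d+1}\le p/13$, there is no wraparound, the top digit forces $\sum_i\alpha_i=0$, and the lost factor $m=e^{O(\sqrt{\log p})}$ is absorbed into $p^{-C'/\sqrt{\log p}}$.

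The only caveat is that your construction needs $p$ large, and the statement as written is actually false for $p\le 11$. The size bound forces $A\neq\emptyset$, and for $p=11$ one has $4a+4a+3a\equiv 0 \pmod{11}$ with $\sum_i\alpha_i=11\neq 0$. Primes $13\le p\le p_0$ can be handled by taking $A=\{1\}$ and enlarging $C'$.
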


Note that in particular, by fixing $\alpha_1=\alpha_2=1$ and $\alpha_3=-2$, the set $A$ from Theorem \ref{lem:beh stre} avoids three-term arithmetic progressions $x,x+d,x+2d$ with $x,d\in \Z_p$ and $d\neq 0$. The original proof of Behrend \cite{behrend1946sets} simply gave sets avoiding these arithmetic progressions but it is well-known \cite{ruzsa_solving_1993,shapira2006behrend} that his construction can be adapted to avoid non-trivial solutions to any constant number of linear equations in at most three variables. 

\vspace{1mm}

{\bf Distinct dilation chains are edge-disjoint.} 
To guarantee \ref{(dag)} we  need to make sure that $G^j$ and $G^{j'}$ do not share an edge if $j\neq j'$. 
If there was some edge $w_xw_y\in G_j\cap G_{j'}$ then by Observation~\ref{obs:edges dil} we have that there are some $\alpha, \alpha'\in \Z\setminus \{0\}$ with $|\alpha|,|\alpha'|\leq 4$ such that $\alpha a_j - \alpha' a_{j'} = 0$ modulo $p$. This contradicts our choice of $A$ from Theorem~\ref{lem:beh stre}, since $a_j, a_{j'} \in A$ and $a_j\neq a_{j'}$. 

\begin{cor} \label{cor:disj}
The graphs $G^j$ with $j\in [q]$ are pairwise edge-disjoint.
\end{cor}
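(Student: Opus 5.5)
We argue by contradiction, exactly as in the paragraph preceding the corollary. Fix distinct $j,j'\in[q]$, so that $a_j\neq a_{j'}$ since the elements $a_1,\dots,a_q$ of $A$ are distinct. Suppose some edge $w_xw_y$ with $x,y\in[p-1]$, $x\neq y$, lies in $E(G^j)\cap E(G^{j'})$.

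The first step is to apply Observation~\ref{obs:edges dil} twice to this edge. Viewing it as an edge of $G^j$ gives some $\alpha\in\Z\setminus\{0\}$ with $|\alpha|\le 4$ and $x-y=\alpha a_j$ mod $p$. Viewing it as an edge of $G^{j'}$ gives some $\alpha'\in\Z\setminus\{0\}$ with $|\alpha'|\le 4$ and $x-y=\alpha' a_{j'}$ mod $p$. Subtracting the two congruences yields
\[\alpha a_j-\alpha' a_{j'}=0 \mbox{ mod } p.\]

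The second step is to put this in the form of Theorem~\ref{lem:beh stre}. Take $a_1:=a_j$, $a_2:=a_{j'}$, and $a_3$ to be any element of $A$, with coefficients $\alpha_1:=\alpha$, $\alpha_2:=-\alpha'$ and $\alpha_3:=0$. All three coefficients have absolute value at most $4$. Since $\alpha_1$ and $\alpha_2$ are both nonzero, the theorem forces $a_1=a_2$, that is, $a_j=a_{j'}$. This contradicts $j\neq j'$, so no such edge exists and the graphs $G^j$ are pairwise edge-disjoint.

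There is no real obstacle here. The only points needing care are that the theorem allows zero coefficients, so it covers two-variable equations, and that the conclusion "$a_i=a_j$ whenever $\alpha_i,\alpha_j\neq 0$" applies because both $\alpha$ and $\alpha'$ are nonzero. The additional conclusion $\sum\alpha_i=0$, which would give $\alpha=\alpha'$, is not needed.
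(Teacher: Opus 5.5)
Your proof is correct and is the same argument as the paper's: you apply Observation~\ref{obs:edges dil} to the common edge, once in $G^j$ and once in $G^{j'}$, to get $\alpha a_j-\alpha' a_{j'}=0$ modulo $p$ with $\alpha,\alpha'$ nonzero and of absolute value at most $4$, and this contradicts the choice of $A$ from Theorem~\ref{lem:beh stre} because $a_j\neq a_{j'}$. Adding a third term with coefficient $\alpha_3=0$ to fit the theorem's three-variable form just makes explicit a step the paper leaves implicit.
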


{\bf Preventing ``unwanted'' copies of $K_5^-$.}
Condition \ref{(*)} forbids that a copy $F$ of $K_5^-$ in $G$ shares an edge with more than one  $H_i$. The most problematic part of showing this turns out to be  if $F$ shares an edge with more than one dilation chain.  
Here is where we use the full power of the properties of our set $A$ from Theorem~\ref{lem:beh stre}. 

\begin{lemma} \label{lem:K_5^-}
For any copy $F$ of $K_5^-$ in $\cup_{j\in [q]}G^j$, there is a $j\in [q]$ with $F\subseteq G^j$.  
\end{lemma}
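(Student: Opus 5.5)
The plan is to show first that every triangle in $\cup_{j\in[q]}G^j$ has all three edges in the same $G^j$, and then use that the triangles of $K_5^-$ link all its edges together. By Corollary~\ref{cor:disj}, each edge $e$ of $\cup_{j}G^j$ lies in exactly one graph $G^j$; call this $j$ the \emph{colour} of $e$. The lemma then says that every copy $F$ of $K_5^-$ is monochromatic.

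\textbf{Step 1: triangles are monochromatic.} Take a triangle $w_xw_yw_z$ in $\cup_j G^j$, and let its edges $w_xw_y$, $w_yw_z$, $w_zw_x$ have colours $j_1,j_2,j_3$. Observation~\ref{obs:edges dil} gives nonzero integers $\alpha_1,\alpha_2,\alpha_3$ with $|\alpha_i|\le 4$ such that
\[x-y=\alpha_1a_{j_1},\qquad y-z=\alpha_2a_{j_2},\qquad z-x=\alpha_3a_{j_3}\pmod p.\]
Summing these three congruences gives $\alpha_1a_{j_1}+\alpha_2a_{j_2}+\alpha_3a_{j_3}=0 \pmod p$. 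We now apply Theorem~\ref{lem:beh stre} to this equation as it stands, with the three (not necessarily distinct) elements $a_{j_1},a_{j_2},a_{j_3}\in A$. It is important not to merge terms with equal $a$'s: merging could produce a coefficient of absolute value up to $8$, which is outside the range the theorem covers. Since all three $\alpha_i$ are nonzero, the theorem forces $a_{j_1}=a_{j_2}=a_{j_3}$, and hence $j_1=j_2=j_3$.

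\textbf{Step 2: propagate through $K_5^-$.} Let $F$ have vertices $u,v,a,b,c$ with $uv$ the missing edge. The triangle $abc$ is present in $F$ and is monochromatic by Step~1, say of colour $j$. Every other edge of $F$ is incident to $u$ or $v$. Such an edge, say $ua$, lies in the triangle $uab$, and this triangle contains the colour-$j$ edge $ab$. By Step~1 the triangle $uab$ is monochromatic, so $ua$ also has colour $j$. The same argument handles every edge at $u$ or $v$, using the triangles $uab, ubc, uca, vab, vbc, vca$. Hence all edges of $F$ have colour $j$, that is, $F\subseteq G^j$.

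I expect no serious obstacle. The only delicate point is the one flagged in Step~1: Theorem~\ref{lem:beh stre} must be invoked with repeated elements of $A$ allowed and with the original coefficients, each bounded by $4$ in absolute value, rather than after collecting equal terms.
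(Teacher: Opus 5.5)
Your proof is correct and takes essentially the paper's approach. Step~1 is the paper's argument of summing the three congruences from Observation~\ref{obs:edges dil} and applying Theorem~\ref{lem:beh stre}, and you are right that the theorem as stated allows repeated elements of $A$ with the original coefficients. Step~2 is a direct proof of Fact~\ref{obs: K5 col}, which the paper quotes without proof and uses in contrapositive form to obtain a non-monochromatic triangle.
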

The proof of this relies on
a simple combinatorial observation implying that in order to avoid copies of $K_5^-$ that have edge from different dilation chains, it is enough to avoid triangles with that property.

\begin{fact} \label{obs: K5 col}
Any non-monochromatic edge-colouring of $K_5^-$ contains a copy of  $K_3$, that is non-monochromatic.
\end{fact}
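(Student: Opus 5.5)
The statement to prove is Fact~\ref{obs: K5 col}: every edge-colouring of $K_5^-$ that uses at least two colours contains a triangle whose edges do not all receive the same colour. I would argue by contradiction. Suppose every triangle of $K_5^-$ is monochromatic, and show that the whole colouring must then be monochromatic.

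The key observation is that monochromatic triangles propagate colours along edges sharing a common triangle. Two edges lying in a common triangle must have the same colour. So it suffices to show that the \emph{triangle graph} of $K_5^-$ is connected. This is the auxiliary graph whose vertices are the edges of $K_5^-$, with two of them adjacent when they lie in a common triangle. Connectivity then forces all nine edges to share one colour, contradicting the assumption that the colouring is non-monochromatic.

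To verify connectivity, label the vertices $\{1,2,3,4,5\}$ and let the missing edge be $45$.
\begin{itemize}
\item The vertices $1,2,3$ span a triangle, so the edges $12,13,23$ share a colour, say red.
\item For $v\in\{4,5\}$ and distinct $i,j\in\{1,2,3\}$, the triple $\{i,j,v\}$ is a triangle, since only $45$ is missing. Hence $iv$ and $jv$ have the colour of $ij$, which is red.
\item This covers every edge: the three edges inside $\{1,2,3\}$ and all six edges from $\{4,5\}$ to $\{1,2,3\}$. These are exactly the nine edges of $K_5^-$.
\end{itemize}
Therefore all edges are red, and we have the desired contradiction.

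There is no real obstacle here. The only point needing care is that every triple used is genuinely a triangle in $K_5^-$. This holds because each such triple contains at most one of the vertices $4,5$, so none of them contains the missing edge $45$.
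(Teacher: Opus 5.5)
Your proof is correct. Assuming every triangle is monochromatic, the triangle on $\{1,2,3\}$ fixes a colour. Each edge $iv$ with $i\in\{1,2,3\}$ and $v\in\{4,5\}$ lies in a genuine triangle $\{i,j,v\}$ with an edge of $\{1,2,3\}$, so all nine edges receive that colour.

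The paper does not prove this Fact; it calls it a ``simple combinatorial observation''. Your colour-propagation argument is the natural routine check it leaves to the reader.
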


\begin{proof}[Proof of Lemma \ref{lem:K_5^-}]
Consider a colouring of $\cup_{j}G^j$ that colours an edge $j$ if it belongs to $G^j$. If $F$ is not a subgraph of any of the $G^j$ then it is not monochromatic and hence there is a triangle $F'$ which is also not.
Let $x,y,z\in \Z_p$ such that the vertices of $F'$ are $w_x,w_y,w_z\in W$ and let $j_1,j_2,j_3\in [q]$ such that $w_xw_y\in G^{j_1}$, $w_xw_z\in G^{j_2}$ and $w_yw_z\in G^{j_3}$.  By Observation \ref{obs:edges dil}, we have that for $i=1,2,3$ there are $\alpha_i\in \Z\setminus \{0\}$ such that $|\alpha_i|\leq 4$ and 
 \[x-y=\alpha_1 a_{j_1}, \quad z-x=\alpha_2a_{j_2} \quad \mbox{ and } \quad y-z=\alpha_3a_{j_3},\]
 modulo $p$.
 Summing these equations gives that $\sum_{i=1}^3\alpha_ia_i=0$ modulo $p$ and thus, as all the $\alpha_i$ are non-zero and $A$ satisfies the conclusion of Theorem \ref{lem:beh stre}, we  have that $a_{j_1}=a_{j_2}=a_{j_3}$, contradicting that $F'$ is non-monochromatic. 
 \end{proof}

{\bf Linking chains are simple.} Finally, we also rectify what we left open at the end of the definition of linking chains: to ensure that they are simple. Namely, we need to see that for every $j \in [q-1]$ the edge $f^j_{\tau_0}$ and the edge $f^{j+1}_0$ are disjoint. The vertices of $f^{j+1}_0$ are $w_{a_{j+1}}$ and $w_{2a_{j+1}}$ 
and the vertices of $f^j_{\tau_0}$ are among $w_{a_j(p-\alpha)}$, where $\alpha \in [4]$.
If two of these were to coincide then there was $\alpha' \in [2]$ and $\alpha \in [4]$
such that $\alpha' a_{j+1}- \alpha a_j = 0$ modulo $p$, which contradicts the property of our set $A$. 
\begin{cor} \label{obs: linking simple}
Every linking chain is simple.
\end{cor}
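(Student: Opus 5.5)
The statement reduces to a single modular equation, since the paragraph before it has already done most of the work. Fix $j\in[q-1]$. The edge $f^{j+1}_0$ has vertices $w_{a_{j+1}}$ and $w_{2a_{j+1}}$. The edge $f^j_{\tau_0}$ is $w_{a_j(3\tau_0+1)}w_{a_j(3\tau_0+2)}$.

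First I will justify that the indices of $f^j_{\tau_0}$ have the form $a_j(p-\alpha)$ with $\alpha\in[4]$. Since $\tau_0=\lfloor (p-3)/3\rfloor$, we have $3\tau_0\in\{p-5,p-4,p-3\}$. Hence $3\tau_0+1$ and $3\tau_0+2$ lie in $\{p-4,\dots,p-1\}$, so they are $\equiv -\alpha \pmod p$ for some $\alpha\in[4]$. In particular both are nonzero mod $p$, so these really are vertices of $W$.

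Next, suppose a vertex of $f^j_{\tau_0}$ coincides with a vertex of $f^{j+1}_0$. Then $\alpha' a_{j+1}\equiv -\alpha a_j \pmod p$ for some $\alpha'\in[2]$ and $\alpha\in[4]$, that is,
\[\alpha' a_{j+1}+\alpha a_j+0\cdot a_{j+1}\equiv 0 \pmod p.\]
This has the form in Theorem~\ref{lem:beh stre}, with coefficients of absolute value at most $4$. The theorem then forces the sum of coefficients, $\alpha'+\alpha$, to be zero. But $\alpha'+\alpha>0$, a contradiction. The contradiction does not depend on whether $a_j=a_{j+1}$, so I do not even need the $a_j$ to be distinct here.

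This shows $f^j_{\tau_0}\cap f^{j+1}_0=\emptyset$. By the construction, the remaining seven vertices of $U_j$ are new and disjoint from $W$. Therefore the three copies $L^j_1,L^j_2,L^j_3$ meet exactly as Definition~\ref{def:simple K4 chain} requires: consecutive copies share exactly the edge $g^j_i$, and non-consecutive copies are vertex-disjoint. The only delicate point is the endpoint arithmetic for $3\tau_0+1$ and $3\tau_0+2$; the rest is bookkeeping.
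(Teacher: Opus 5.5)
Your proposal is correct and follows the paper's argument. Both reduce simplicity of the linking chain to the disjointness of $f^j_{\tau_0}$ and $f^{j+1}_0$, and both rule out a shared vertex by applying the Behrend-type property of $A$ to a two-term relation with coefficients of size at most $4$. Your check that $3\tau_0+1,3\tau_0+2\in\{p-4,\dots,p-1\}$ fills in a step the paper only asserts. Your relation $\alpha' a_{j+1}+\alpha a_j\equiv 0$ also has the correct sign (the paper writes a minus, which still yields a contradiction but only via $a_j\neq a_{j+1}$), so the sum-of-coefficients condition alone suffices, as you note.
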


\subsection{Finishing the proof} 
Finally  we show that for appropriate choice of $p$ and $A$, the $K_5$-chain $(H_i,e_i)_{i\in [\tau]}$ satisfies the conditions of Lemma \ref{lem:chain lower}, hence the starting graph $G':=(\cup_{i\in [\tau]}H_i)-\{e_1,\ldots,e_\tau\}$ has running time $\tau =n^{2-o(1)}$. 
To this end, for $n\in \N$ sufficiently large, we let $p\in\N$ be a prime such that $n/4\leq p\leq n/2$, which exists by Bertrand's postulate/Chebyshev's theorem, and fix $\tau_0:=\lfloor \frac{p-3}{3}\rfloor$ as before. 
Let $A:=\{a_1,\ldots,a_q\}\subseteq \Z_p\setminus \{0\}$ with $q\geq p^{1-C'/\sqrt{\log p}}$ for some $C'>0$ be the set from Theorem  \ref{lem:beh stre}. Then  for an appropriate $C>0$ we have
\[\tau:=\tau_0q+3(q-1)\geq pq/4\geq  n^{2-C/\sqrt{\log n}}= n^{2-o(1)}.\]
 The chains all lie on $V=W\cup \left(\cup_{j\in [q-1]}U_j\right)$ which has size $|V|=p-1+7(q-1)\leq n$. 

We are left to show conditions \ref{(*)} and \ref{(dag)} of Lemma \ref{lem:chain lower}. For \ref{(dag)} note that by construction we do have that $E(H_i) \cap E(H_{i+1}) = e_i$ for every $i\in [\tau]$. For $2\leq i +1 < i'\leq \tau$ we must show that $E(H_i)\cap E(H_{i'})=\varnothing$. Corollary \ref{cor:disj} applies here unless $H_i$ and $H_{i'}$ belong to the same dilation chain or same linking chain. But these are simple (cf.\ Corollary~\ref{obs: linking simple}) and the property holds by construction.

To establish property \ref{(*)} we fix $F$ to be some copy of $K_5^-$ in $G:=\cup_{i\in [\tau]}H_i$ and show that there is some $i\in [\tau]$ such that $F\subseteq H_i$. Let us first assume that $F$ has a vertex $u$ outside of $W$, say $u\in U_j \setminus W$ for some $j\in [q-1]$. In case $V(F) \subseteq U_j$ then we are done by Observation \ref{obs:simple K5} as $G[U_j]$ only contains the edges of the $j^{\text{th}}$ linking chain which is simple by Corollary~\ref{obs: linking simple}.  
Otherwise  there is some $w \in V(F)\setminus U_j$, which we show is impossible. 
On the one hand $u$ and $w$ should have at least three common neighbours as they are the non-adjacent vertices of a copy of $K_5^-$ in $G$. This is because all neighbours of $u$ are in $U_j$ (and $w$ is not). On the other hand, we claim that their common neighbourhood is contained in one of the border edges $f_\tau^j$ and $f_0^{j+1}$ of the $j^{\text{th}}$ linking chain, providing a contradiction. 
Indeed, all $U_j$-neighbours of $w$ are in the union $f_{\tau_0}^j \cup f_0^{j+1}$, yet $u$, whose neighbourhood is contained in $U_j$, cannot have neighbours in both $f_{\tau_0}^j$  and $f_0^{j+1}$ (since these edges are disjoint by Corollary~\ref{obs: linking simple} and the linking chain has three copies of $K_5$).

Thus we have $V(F)\subseteq W$ and hence $F\subseteq \cup_{j\in [q]}G^j$. 
In this case Lemma~\ref{lem:K_5^-} implies that $F\subseteq G^j$ for some $j\in [q]$. As each dilation chain is simple by construction, Observation~\ref{obs:simple K5} can then be used to deduce that there is some $H_i$ containing $F$.  
$\Box$ 

\vspace{1mm}
The idea of the linking chains which we used for our lower bound on $M_{K_5}(n)$ originated in the proof of Balogh et al.~\cite{balogh2019maximum} for $K_5$. In our recent work 
 \cite{FMSz3}, we axiomatised the procedure and give general conditions for when chains can be linked to give longer chains. This reduces the problem of finding one long chain to finding many chain segments, which is often conceptually much easier. Indeed, for example for the ladder chains discussed in Section \ref{subsec:cliques}, by using linking chains one can give the simple chains with different slopes separately and does not need to worry about one leading into another. In fact, in their original proof showing that  $M_{K_k}(n)$ is quadratic for $k\geq 6$, Balogh et al.~\cite{balogh2019maximum} did not use linking chains. Instead, they used unequal sides of the partitions and wrapped the triangles around so that each time the chain visits a certain triangle, it does so with respect to a different slope.

\section{$H$-percolating graphs} \label{sec:H perc}
Having explored $K_k$-processes in detail, the rest of the survey will be concerned with the influence of the infection rule $H$ on the $H$-process, our main focus being on the running time $M_H(n)$. Before this, let us briefly take stock and recap what we have seen in the clique case.  In particular, it is interesting to consider the interplay between the running time of the bootstrap process and whether or not it percolates. In Section \ref{sec:cliques and chains}, we saw that $\ws(n,K_k)=\sat(n,K_k)$ and so there are edge-minimal $K_k$-percolating starting graphs $G$ that percolate in just one step. The set of extremal constructions for weak saturation is  larger though and  there are edge-minimal $K_k$-percolating graphs which are much slower to percolate, for example the simple $K_4$-chains of Definition \ref{def:simple K4 chain} which take linear time to percolate, asymptotically meeting the slowest possible running time  $M_{K_4}(n)$. For larger $k\geq 5$, as noted by Matzke \cite{matzke2015saturation},  the wide class of tight examples for $\ws(n,K_k)$ given by Bollob\'as \cite{bollobas1968weakly} using an inductive argument, all percolate in linear time and so fall far short of the maximum running time $M_{K_k}(n)$ which is quadratic (or at least almost quadratic for $K_5$). 

We also looked at the threshold $p_c(n,K_k)$ for when a random graph $G(n,p)$ is $K_k$-percolating. Above this threshold, we saw that typical edges of $K_n$ play the role of the final edge $e_d$ of some $F_d$ generated by a  $K_k$-chain after removing the edges $e_i$. Moreover, we can take that this chain is logarithmic in size and so random graphs above the threshold will also percolate fast. In fact, this was studied in detail by Gunderson, Koch and Przykucki \cite{gunderson2017time} who showed that just above the threshold $p_c(n,K_k)$ the random graph will a.a.s.\ $K_k$-percolate in $O(\log\log n)$ steps and this time decreases as $p$ increases. 

On the other hand, the chain constructions that we saw  to  lower bound  $M_{K_k}(n)$ for $k\geq 5$ are far from being $K_k$-percolating. Indeed, it was crucial to our analysis via Lemma \ref{lem:chain lower} that only the edges $e_i$ of the chain will be added during the $K_k$-process. This begs the question as to whether it is possible to have a starting graph $G$ such that the $K_k$-process percolates but does so slowly. The main topic of this section is a new result which indicates that it is possible for $H$-percolating  graphs to be slow with respect to the $H$-process, even asymptotically achieving the optimal maximal running time $M_{H}(n)$. 
Theorem \ref{thm:slow perc} below was noted, but not proven, by Bollob\'as, Przykucki, Riordan and Sahasrabudhe~\cite{bollobas2017maximum} for the case $H=K_k$. Here we show the result for the large class of \textit{$(2,1)$-inseparable} infection rules $H$.  

    \begin{Def}\label{def:inseparable}
    A graph $H$ with an edge is called {\em $(2,1)$-separable} if it can be disconnected by deleting at most two vertices (and all incident edges) and one further edge. Otherwise $H$ is called {\em $(2,1)$-inseparable}. 
    \end{Def}

Note that every $(2,1)$-inseparable graph is $3$-connected as it has at least five vertices and cannot be disconnected by deleting two vertices. Furthermore every $4$-connected graph is $(2,1)$-inseparable. Indeed, if the removal of an edge $e$ and two vertices $z_1,z_2$ disconnected a $4$-connected graph $H$ then so would the removal of $z_1, z_2$ and an appropriate endpoint of $e$ (as $H$ has at least five vertices).  
The notion of $(2,1)$-inseparability  plays a crucial role with respect to chain constructions. Indeed, it was precisely this property of $K_5$  that was used to prove  Observation \ref{obs:simple K5} showing that simple chains do not induce unwanted copies of $K_5^-$. 

\begin{theorem} \label{thm:slow perc}
For any $(2,1)$-inseparable $H$, there is some  $C=C(H)$ such that for all $n\in \N$, there is an $H$-percolating graph $\tilde{G}$  with $v(\tilde G)\leq C n$ and  $\tau_H(\tilde{G})\geq M_H(n)$.
\end{theorem}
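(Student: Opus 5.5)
Start with an $n$-vertex graph $G$ with $\tau_H(G)=M_H(n)$, run the $H$-process on it, and let $\final{G}_H$ be its final graph. The plan is to attach gadgets to $G$ that make the final graph grow all the way to a clique. The gadgets must not speed up the process on the original vertices, and they must add at most $O(n)$ vertices. The natural tool is the ``pendant'' construction behind weak saturation. If $S$ is a set of vertices spanning a clique, we glue onto $S$ a copy of $H$ minus an edge, or more generally a small $H$-percolating gadget. Then every pair inside $S$, and every pair between $S$ and the new vertices, eventually gets infected. Since $H$ is $(2,1)$-inseparable, $H$ is $3$-connected and any copy of $H$ cannot use a ``thin'' attachment. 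So a gadget attached through at most two vertices plus one edge cannot take part in copies of $H$ that straddle it, except in the intended way.

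Concretely, I would proceed in three steps. First, take a disjoint copy of a fixed $H$-percolating graph $P$ on $v_H$ vertices, for instance $H$ minus an edge, which becomes $K_{v_H}$ after rounds. Connect it to $G$ only after time $\tau:=M_H(n)$ would have elapsed. This is done by building a long ``fuse'': a simple $H$-chain, in the sense of Definition~\ref{def:H-chain}, of length about $\tau$ is not affordable in vertex count, so instead the fuse is triggered by $G$ itself. Fix an edge $e^*$ infected in the last round of the process on $G$, and attach to $e^*$ a copy of $H$ missing only $e^*$ and one further edge $f$. Thus $f$ becomes infected at time $\tau+1$ and not earlier. This is where $(2,1)$-inseparability is used: we must check that no copy of $H$ in the new graph mixes old and new vertices before $e^*$ appears, since any such copy would be separated by the two vertices of $e^*$ together with the missing edge. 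Second, once the gadget is active (a clique $K$ on $v_H$ vertices exists that contains $e^*$), we spread the infection to all of $V(G)$. For each vertex $v\in V(G)$ in turn, or in a bounded number of batches, we add $O(1)$ new vertices joined to $v$ and to the growing clique, so that a copy of $H$ minus an edge forms once the clique reaches them. Percolation then follows from the standard weak-saturation argument: a vertex with $\ge v_H-2$ neighbours in a clique, inside an appropriately completed copy of $H$, is absorbed. Each vertex of $G$ costs $O(1)$ auxiliary vertices, giving $v(\tilde G)\le Cn$.

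The main obstacle is ensuring that the auxiliary attachments do not create early copies of $H$ that shortcut the process on $G$. Otherwise $\tau_H(\tilde G)$ could drop below $M_H(n)$, or the gadgets could fire before time $\tau$. To handle this, every gadget hanging off $V(G)$ should meet $V(G)$ in at most two vertices before activation, and should attach to the rest of the construction only through edges that are still missing. Then any copy of $H$ using both gadget vertices and $G$-vertices before activation is disconnected by removing two vertices and one edge, which contradicts $(2,1)$-inseparability. With this invariant, the process restricted to $V(G)$ agrees with the process on $G$ up to time $\tau$, so $\tau_H(\tilde G)\ge\tau=M_H(n)$, while the gadgets guarantee that $\final{\tilde G}_H$ is complete.
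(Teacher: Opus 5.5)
There is a genuine gap. Your second step carries essentially all the difficulty of the theorem, but it is never actually constructed, and the invariant you propose for it contradicts what that step has to do.

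Take a gadget $X$ that meets the rest of the graph only in a set $S\subseteq V(G)$ with $|S|\le 2$, all other connections being absent.
\begin{itemize}
\item Suppose a copy $F$ of $H-g$ in $\tilde G_{t-1}$ met both $X\setminus S$ and $V(\tilde G)\setminus (X\cup S)$. Then deleting $S$ and $g$ would disconnect the copy $F+g$ of $H$, contradicting $(2,1)$-inseparability.
\item By induction on $t$, no pair between $X\setminus S$ and the outside is ever infected. So $X$ interacts with the rest only through the single pair $S$.
\item Such a gadget is inert: it cannot carry the infection from $e^*$ to any other vertex of $V(G)$.
\end{itemize}
As soon as you do what step two actually says, namely join new vertices to $v$ and to the ``growing clique'', the invariant breaks. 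The new structure meets $V(G)$ in many vertices, and the gadgets are linked to each other from time $0$. You would then have to exclude copies of $H$ minus an edge that run through several gadgets and through $G_{t-1}$, about which you know nothing. For example, for $H=K_5$, one new vertex joined to three vertices of a $K_4$ in $G_{t-1}$ already gives an early $K_5^-$. This is exactly what happens if the growing clique contains already-absorbed vertices of $V(G)$.

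Step one also overclaims. Completing one copy of $H$ at $e^*$ produces $H$, not $K_{v_H}$. Likewise, $H$ minus an edge does not percolate when $H$ is self-stable, as typical random infection rules are (cf.\ Theorem~\ref{thm:random}). So there is no clique to grow from. Separately, a disjoint percolating graph $P$ would percolate immediately, not ``after time $\tau$''.

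The paper avoids all of this as follows.
\begin{itemize}
\item Nothing is attached to $V(G)$ except at $e_\tau$ and at target pairs $f$. The clique on $V(G)$ is obtained not by absorbing vertices one at a time, but by infecting the edges $f$ of an auxiliary $H$-percolating graph $G'$ on $V(G)\cup I$ with $O(n)$ edges (weak saturation). This is what keeps $v(\tilde G)=O(n)$.
\item Each $f$ gets its own copy of the gadget $\Gamma$ of Lemma~\ref{lem:gadget}. There, $\Gamma-e$ is $H$-stable, $f\in\final{\Gamma}_H$, and $\dist_\Gamma(e,f)\ge k$.
\item The gadget vertices are brought into the clique through middle sets $U^f$. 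These are far from $e_\tau\cup f$, and completing them makes $\Gamma^f$ percolate. Each vertex of $U^f$ gets $\delta(H)-1$ neighbours in the auxiliary independent set $I$, not in $V(G)$, along a bipartite graph of girth greater than $k$.
\item Since $I$ gains no edges before time $\tau$, a copy of $H$ minus an edge passing through $I$ would yield a subgraph of that bipartite graph with minimum degree at least $2$ on at most $k$ vertices, hence a short cycle.
\item Once copies through $I$ are excluded, the distance condition on $\Gamma$ means that a copy entering some $W^f$ without lying inside $\Gamma^f$ is cut by the two vertices of $e_\tau$ or of $f$ alone. This is the correct version of your two-vertex invariant.
\item After time $\tau$, Lemma~\ref{lem:cliques} absorbs every $U^f$, and then every $\Gamma^f$, into the clique on $V(G)\cup I$.
\end{itemize}
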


Following \cite{bollobas2017maximum}, we define \[M'_H(n):=\max\{\tau_H(G): G \mbox{ is an } n\mbox{-vertex } H\mbox{-percolating graph}\}.\]
Then clearly one has that $M_H(n)\geq M'_H(n)$. Whilst Theorem \ref{thm:slow perc} does not quite give that $M_H(n)=O(M'_{H}(n))$ due to our lack of understanding (see Conjecture \ref{conj:lim}) of the functions $M_H(n)$ and $M'_{H}(n)$, it does give a strong indication that this is the case. 
 The $(2,1)$-inseparability is crucial in our proof of Theorem \ref{thm:slow perc} as we adopt ideas from chain constructions. As we will see later, there are graphs  that are $(2,1)$-separable for which we can still use chain constructions, for example the wheel graph or bipartite graphs which satisfy an analogue of inseparability \cite{FMSz3}. For these, it is likely that the proof ideas for Theorem \ref{thm:slow perc} can be extended. For other graphs $H$ for which we know $M_H(n)$, the constructions for the lower bounds are often $H$-percolating already. We therefore conjecture the following. 

\begin{conj} \label{conj:slow perc}
For all infection rules $H$, we have $M_H(n)=\Theta(M'_{H}(n))$.
\end{conj}

Before proving Theorem \ref{thm:slow perc}, we briefly survey what is known about  general infection rules $H$  in the context of weak saturation and percolation thresholds. 

\subsection{Minimal $H$-percolating graphs} \label{sec:wsat H}

Other than $H=K_k$, there are several results that determine $\ws(n,H)$ exactly. Restricting here to connected $H$, Borowiecki and Sidorowicz \cite{borowiecki2002weakly} noted that $\ws(n,C_k)=n-1$ if $k$ is odd and $n$ when $k$ is even. Indeed, any spanning tree will percolate if $k$ is odd and if $k$ is even an extra edge is needed, otherwise the final graph will be complete bipartite. For trees $T$, Faudree, Gould and Jacobson \cite{faudree2013weak} showed that  $\ws(n,T)$ does not depend on $n$ and for any $t$-vertex tree, one has $t-2\leq \ws(n,T)\leq \binom{t-1}{2}$ with both bounds being tight for certain trees.  Further weak saturation numbers for trees were determined by Pu and Cui \cite{cui2019weak} and recently by Chen, Liu and Yang \cite{chen2025note} who show that for any rational $\alpha\in [1,2]$ there is a family of $t$-vertex trees whose weak saturation numbers are of order $t^\alpha$.  Also recently, Kronenberg, Martins and Morrison \cite{kronenberg_weak_2021} determined $\ws(n,K_{k,k})$ and $\ws(n,K_{k,k+1})$ and complete bipartite graphs whose sizes grow with $n$ were considered in  \cite{akhmejanova2025weak}. 

Moving away from exact results to asymptotics, Alon \cite{alon1985extremal} proved that for every $H$, there is some constant $w_H\geq 0$ such that $\ws(n,H)=(w_H+o(1))n$. What controls the constant $w_H$? Faudree, Gould and Jacobson \cite{faudree2013weak} established that it is closely related to the minimum degree $\delta(H)$ of $H$, showing that 
\begin{equation} \label{eq:wsat interval}\frac{\delta(H)}{2}-\frac{1}{\delta(H)+1}\leq w_H\leq \delta(H)-1.\end{equation}
In fact, Terekhov and Zhukovskii \cite{terekhov2025weak} found an error in the lower bound which they then fixed, as well as showing that every multiple of $1/(\delta(H)+1)$  in the interval is realisable as the limit $w_H:=\lim \ws(H)/n$ for some graph $H$. Very recently Ascoli and He \cite{ascoli2025rational} went further, characterising all the rationals that are realisable as $w_H$ for some $H$. Interestingly, they showed that all possible rationals at least $3/2$ are realisable whereas below $3/2$ the realisable values are far more sparse with just one accumulation point at 1. They conjecture that all possible values of $w_H$ are rational.

We conclude this section by remarking that there are still many open questions regarding the behaviour of $\ws(n,H)$ and the associated constants $w_H$. In particular, it would be interesting to determine what properties of $H$ control the position of $w_H$ in the interval \eqref{eq:wsat interval}. What about for a typical $H$?

\begin{prob}
What is $w_H=\lim \frac{\ws(n,H)}{n}$ when $H=G(k,p)$?
\end{prob}

\subsection{$H$-percolating random graphs} \label{sec: H perc thresh}

With the same proof as for cliques $K_k$, Balogh, Bollob\'as, and Morris~\cite{balogh2012graph} showed an upper bound on $p_c(n,H)$ of the form $n^{-1/\lambda (H)+o(1)}$ for \textit{balanced} infection rules $H$, where we recall that $\lambda(H) := \frac{e_H - 2}{v_H -2}$  \eqref{eq:Kk threshold} and the $o(1)$-term hides logarithmic factors. Here {\em balanced} means that $\frac{e_F-1}{v_F-2} \leq \lambda (H)$ for every proper subgraph $F\subset H$ with $v_F \geq 3$. The balanced condition is necessary due to their use of the second moment method to show that typical edges lie at the end of simple $H$-chains.  The authors also show that all cycles $C_k$ act like $K_3$ and have $p_c(n,C_k)$ coinciding with the connectivity threshold $\log n/n$. Moreover, they show that the threshold for $K_{2,3}$ is also $p_c(n,K_{2,3})=\log n /n$ which gives an example $H$ where the exponent is not equal to $-1/\lambda(H)$. 
In fact, the paper also provides indication that the general picture is much more complex than just what $\lambda(H)$ offers. It is not difficult to see~\cite{balogh2012graph} that the existence threshold for a copy of $H$ minus an edge in $G(n,p)$, namely $\Omega(n^{-1/\lambda'})$ with $\lambda' =\lambda'(H) = \min_{e\in H} \max_{F\subseteq H-e} e_F/v_F$, provides a  lower bound on the $H$-percolation threshold for all infection rules $H$, and it was shown in \cite{balogh2012graph} 
that this lower bound is tight  when the infection rule $H$ has a pendant edge. They also provide further examples $H$  where the exponent of the threshold is $-\frac{1}{\lambda'(H)}$ and where it is strictly in between $-\frac{1}{\lambda'(H)}$ and $-\frac{1}{\lambda(H)}$. 

Bayraktar and Chakraborty~\cite{bayraktar2019k} establish that for the complete bipartite graph $K_{r,s}$ the critical exponent coincides with the upper bound $-1/\lambda(K_{r,s})$ of Balogh et al.~\cite{balogh2012graph} when $3\leq r\leq s \leq (s-2)^2 +s$, i.e. when $K_{r,s}$ is balanced. 

For general $H$ with $\delta(H) \geq 2$ and $v(H) \geq 4$,  Bartha and Kolesnik~\cite{bartha2024weakly} proved a lower bound $n^{-1/\lambda_*(H) +o(1)}$  which is governed by a more subtle function $\lambda_*(H) = \min \frac{e_H-e_F-1}{v_H -v_F}$, where the minimum is taken over all subgraphs $F\subset H$ with $2\leq v_F < v_H$. They show that for balanced graphs $\lambda^*= \lambda$, so together with the upper bound of \cite{balogh2012graph} the asymptotics of the exponent of the threshold is determined for all balanced graphs. 
Recently Bartha, Kronenberg, and Kolesnik~\cite{bartha2024h} proved $H=G(k, 1/2)$ is balanced a.a.s.\ and hence the critical exponent for the $H$-process percolating is equal to $-1/\lambda(H)$ for ``most'' infection rules $H$. 

For non-balanced $H$, one has that $\lambda_*$ is strictly smaller than $\lambda$ and may even be smaller than $\lambda'$. 
Therefore, for non-balanced infection rules $H$, the problem of determining the percolation threshold $p_c(n,H)$   is still an outstanding mystery. The graphs $K_{2,t}$ seem like an interesting example that is yet to be understood. Indeed, Bidgoli, Mohammadian and Tayfeh-Rezaie \cite{bidgoli_k_2t-bootstrap_2021} offer both  upper bounds and lower bounds  on the exponent for $p_c(n,K_{2,t})$. For $K_{2,3}$, their upper bound exponent had already been observed to be tight in \cite{balogh2012graph}. In~\cite{bidgoli_k_2t-bootstrap_2021} the tightness of their upper bound is also established for $K_{2,4}$. There the answer is $-10/13$, which is interestingly different from $-1/\lambda,-1/\lambda'$ and $-1/\lambda_*$.  

\subsection{Slow $H$-percolating graphs} \label{sec:slow perc}
In this section, we prove Theorem \ref{thm:slow perc} giving $H$-percolating graphs that percolate slowly. This proof has not appeared elsewhere and we believe it is of considerable interest to the community. 
Nonetheless, it is somewhat delicate and a bit of a detour from the main theme of the survey. Therefore the reader may want to skip ahead to Section \ref{sec:quad} on first read.

In order to prove Theorem \ref{thm:slow perc}, we follow a natural strategy. Given $n\in \N$, fix a starting graph $G$ that achieves running time $\tau:=M_H(n)$ and an edge $e_\tau$ that is added at time $\tau$ in the process on $G$, the aim is to create a new starting graph $\tilde{G}$ on $O(n)$ vertices that contains $G$ and does not interfere with the $H$-process on $G$ for the first $\tau$ steps of the process. The addition of  $e_\tau$ should then  trigger the rest of the graph to percolate. In particular, we want that for every edge $f$ on $V(G)$ that does \textit{not} lie in $\final{G}_H$, there is some gadget that is triggered by $e_\tau$ and forces $f$ to be infected. A natural candidate for such a gadget is a \textit{simple $H$-chain}. Here, as with our simple $K_k$-chains, a simple $H$-chain is just a sequence $(H_i,e_i)_{i\in \ell}$ of copies of $H$ with consecutive copies $H_i$ and $H_{i+1}$ intersecting in a single edge $e_i\in H_i\cap H_{i+1}$ (and no other intersections between the $H_i$). As we saw for $H=K_5$ in Observation \ref{obs:simple K5}, we have the following.
\begin{obs} \label{obs:insep simple} If $H$ is $(2,1)$-inseparable and $(H_i,e_i)_{i\in \ell}$ is a simple $H$-chain then for any $e\in E(H)$ and any copy $F$ of $H-e$ in $\cup_{i\in [\ell]}H_i$, there is some $i\in [\ell]$ such that $F\subseteq H_i$. 
\end{obs}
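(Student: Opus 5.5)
We argue by contradiction, mirroring the argument given after Observation~\ref{obs:simple K5}. Suppose $F$ is a copy of $H-e$ in $G:=\cup_{i\in[\ell]}H_i$ that is not contained in any single $H_i$. Since $H$ is $(2,1)$-inseparable it is $3$-connected with at least five vertices. Hence $H-e$ is connected, and deleting any two vertices of $H-e$ leaves a connected graph: otherwise those two vertices together with the edge $e$ would separate $H$. So $F$ is connected and cannot be disconnected by removing at most two vertices.

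The key structural fact concerns the underlying graph of a simple chain. For each $j\in[\ell-1]$, let $e_j=u_jv_j$. The graph $G-\{u_j,v_j\}$ has no edges between $A_j:=\bigcup_{i\le j}V(H_i)\setminus\{u_j,v_j\}$ and $B_j:=\bigcup_{i>j}V(H_i)\setminus\{u_j,v_j\}$. This holds because $V(H_i)\cap V(H_{i'})$ is empty for $|i-i'|\ge 2$ and equals $e_i$ when $i'=i+1$. Strictly, simplicity is stated as an edge intersection condition; we take it, as in Definition~\ref{def:simple K4 chain}, to mean that non-consecutive copies are vertex-disjoint and consecutive copies share exactly the two vertices of $e_i$. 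Every edge of $G$ lies in some $H_i$, so it lies inside $A_j\cup\{u_j,v_j\}$ or inside $B_j\cup\{u_j,v_j\}$.

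Now let $j$ be minimal such that $V(F)\not\subseteq\bigcup_{i\le j}V(H_i)$, and first suppose $j\ge 2$. Then $F$ has a vertex in $B_{j-1}$. It must also have a vertex in $A_{j-1}$; if not, $V(F)\subseteq B_{j-1}\cup\{u_{j-1},v_{j-1}\}$, and we apply the same reasoning from the other end of the chain. In that case removing $u_{j-1},v_{j-1}$ disconnects $F$, a contradiction. We conclude that for every $j$, $V(F)$ lies entirely on one side of the cut at $e_j$. Taking the smallest interval of indices whose copies cover $V(F)$, we get $V(F)\subseteq V(H_i)$ for a single $i$.

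It remains to pass from vertices to edges. Since $G$ restricted to $V(H_i)$ may contain edges of $H_{i-1}$ or $H_{i+1}$ only along $e_{i-1}$ or $e_i$, and these edges belong to $H_i$ anyway, every edge of $F$ lies in $H_i$. Thus $F\subseteq H_i$.

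The main delicacy is the case where $F$ uses both endpoints of $e_j$ together with vertices from only one side. This is handled by sweeping from both ends: choosing the minimal and maximal indices $i$ with $V(H_i)\setminus\bigcup_{i'\neq i}V(H_{i'})$ meeting $V(F)$. If these differ, some $e_j$ strictly between them separates two vertices of $F$, and the two-vertex-deletion contradiction applies. If $V(F)$ meets no private part at all, then $V(F)\subseteq\bigcup_j e_j$. Such a set induces in $G$ only a disjoint union of short paths and triangles, which cannot contain the $2$-connected graph $F$ on at least five vertices.
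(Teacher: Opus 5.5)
Your proof is correct and uses the same approach as the paper, which gives only the one-line remark after Observation~\ref{obs:simple K5}. The endpoints of each $e_j$ form a two-vertex cut of $\cup_i H_i$, and $(2,1)$-inseparability means $H-e$ cannot be disconnected by deleting two vertices. So $F$ lies on one side of every such cut, hence inside a single $V(H_i)$, and every edge of $\cup_i H_i$ induced on that set belongs to $H_i$.

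There are two blemishes to fix.

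First, $j$ should be the minimal index with $V(F)\subseteq\bigcup_{i\le j}V(H_i)$. As written, with $\not\subseteq$, $j$ equals $1$ unless $V(F)\subseteq V(H_1)$. With the corrected reading, ruling out a vertex of $F$ in $A_{j-1}$ gives $V(F)\subseteq V(H_j)$ at once, so no sweep from the other end is needed.

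Second, the claim in your last paragraph that $\bigcup_j e_j$ induces only short paths and triangles is false. For $H=K_5$, the four vertices of $e_j\cup e_{j+1}$ span a $K_4$ inside $H_{j+1}$, so one gets an arbitrarily long strip of $K_4$'s. That paragraph is superfluous, though, because the cut argument already handles every case, including $V(F)\subseteq\bigcup_j e_j$.
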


As a consequence of this observation, note that if $e_0\in H_1-e_1$ the graph $G'=\cup_{i=1}^\ell H_i-\{e_0,\ldots,e_\ell\}$ is \textit{$H$-stable}, that is, no edge is added in the $H$-process on $G'$. Indeed, for each $i\in [\ell]$, we have that $V(H_i)$ hosts $e(H)-2$ edges in $G'$ and so we cannot embed some copy of $H-e$ into $H_i$. 

Now armed with these simple $H$-chains, in a similar fashion to the linking chains used to lower bound $M_{K_5}(n)$, for each $f\in \binom{V(G)}{2}-\final{G}_H$, we can place a short simple chain $(H_i,e_i)_{i\in [\ell]}$ such that $e_\tau$ is some edge of $H_1-e_1$, $f=e_\ell$ and all other vertices of the chain are disjoint from $V(G)$, with the new vertices introduced by these chains being distinct for different choices of $f$. One immediate problem with this strategy is that we might be adding too many vertices. Indeed, it could be that there are quadratically many edges $f\in \binom{V(G)}{2}-\final{G}_H$ and each one requires adding constantly many new vertices. For this, there is an easy solution. Indeed, we do not have to infect \textit{all} of the edges $f$, just enough of them so as to trigger all edges to be added eventually. Indeed, as we saw in Section \ref{sec:wsat H}, there is some graph $G'$ on $n$ vertices with $(w_H+o(1))n\leq v(H)n$ edges that is $H$-percolating. Placing an auxiliary copy of $G'$ on $V(G)$ and infecting all edges $f$ which lie in $G'$, will be enough to trigger the process to infect all  $\binom{V(G)}{2}$ edges. 

There is a second problem which is more serious. Indeed, the above sketch shows how to infect all of the edges on $V(G)$ but in doing so, we added new vertices for our small chains $(H_i,e_i)_{i\in [\ell]}$ used to infect the edges $f$. There is no guarantee that the edges incident to these new vertices will be infected by this process. Indeed, the small chains were chosen to be vertex-disjoint from $V(G)$ \textit{precisely because} we do not want the process on the small chains $(H_i,e_i)_{i\in [\ell]}$ and the  process on $G$ to interact. One way to guarantee that vertices gain many neighbours in the $H$-process is if they have $\delta(H)-1$ neighbours in a set that will eventually become a large clique, as $V(G)$ will become after we ensure that all edges on $V(G)$ become infected. 

\begin{lemma}\label{lem:cliques}
    Let $G$ be a graph, and  $U, W \subseteq V(G)$.
    If $W$ is a clique of size at least $v(H)-1$ in $\final{G}_H$, and every vertex in $U$ has  at least $\delta(H)-1$ $G$-neighbours in $W$, then $U\cup W$ is also a clique in $\final{G}_H$.
    \end{lemma}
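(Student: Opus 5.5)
The plan is to work entirely inside the final graph $\final{G}_H$, which is closed under the $H$-process: every copy of $H$ minus an edge in $\final{G}_H$ is completed by that edge, since otherwise the process would not have stabilised. It therefore suffices to show that each pair of vertices in $U\cup W$ is adjacent in $\final{G}_H$. Pairs inside $W$ are adjacent by hypothesis, so two cases remain: a pair $uw$ with $u\in U$ and $w\in W$, and a pair $uu'$ with both vertices in $U$.

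\emph{Step 1: each $u\in U$ is joined to all of $W$.} Fix $u\in U$. Let $N$ be a set of $\delta(H)-1$ of its $G$-neighbours in $W$; these edges also lie in $\final{G}_H$. Take $w\in W\setminus N$. Choose a vertex $x$ of $H$ of minimum degree $\delta(H)$, and one of its neighbours $y$. Now embed $H$ as follows:
\begin{itemize}
\item $x\mapsto u$ and $y\mapsto w$;
\item the other $\delta(H)-1$ neighbours of $x$ go onto $N$;
\item the remaining $v(H)-\delta(H)-1$ vertices of $H$ go onto further vertices of $W\setminus(N\cup\{w\})$.
\end{itemize}
This last step is possible because $|W|\ge v(H)-1$, so $W\setminus\{w\}$ offers at least $v(H)-2$ slots, which is exactly the number of vertices of $H$ other than $x$ and $y$. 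Every edge of this copy not touching $u$ lies inside the clique $W$. The edges at $u$ other than $uw$ go to $N$ and are present. So the copy of $H$ is complete except for $uw$, and closure gives $uw\in\final{G}_H$. Hence $W\cup\{u\}$ is a clique of $\final{G}_H$.

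\emph{Step 2: pairs inside $U$.} Given $u,u'\in U$, by Step 1 the set $W\cup\{u'\}$ is a clique of size at least $v(H)$ in $\final{G}_H$. Moreover $u$ is adjacent in $\final{G}_H$ to all of $W$, and $W$ has at least $\delta(H)-1$ vertices. Repeat the embedding of Step 1 with $W\cup\{u'\}$ playing the role of $W$ and $u'$ playing the role of $w$. Closure then gives $uu'\in\final{G}_H$.

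\emph{Main obstacle.} The only delicate point is the counting in the embedding: we need enough room in $W$ besides $w$ and $N$. This needs $|W|-1\ge v(H)-2$, which is exactly the hypothesis $|W|\ge v(H)-1$. We also implicitly use $\delta(H)-1\le |W|-1$, which holds since $\delta(H)\le v(H)-1$. The degenerate case $\delta(H)=0$ is handled the same way, with $N$ simply empty.
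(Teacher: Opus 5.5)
Your proof is correct and is essentially the paper's argument. You send a minimum-degree vertex of $H$ to $u$, its neighbourhood onto $w$ together with $\delta(H)-1$ of the $G$-neighbours of $u$ in $W$, and the remaining vertices into $W$; pairs inside $U$ are then handled by embedding the rest of $H$ into $W$.

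The one thing to fix is your closing remark about $\delta(H)=0$. In that case the vertex $x$ has no neighbour $y$ to send to $w$, and the statement is in fact false. For example, take $H=K_3\sqcup K_1$ and let $G$ be a triangle on $W$ plus an isolated vertex $u\in U$. This $G$ is already $H$-stable, so $u$ never joins $W$. You should therefore assume $\delta(H)\ge 1$, as the paper implicitly does. In the paper's application $H$ is $(2,1)$-inseparable, so $\delta(H)\ge 3$ anyway.
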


    \begin{proof}
    Let $u\in U$, and let $w_1,\ldots,w_{\delta(H)-1}\in W$ be pairwise distinct $G$-neighbours of $u$.
    For $w\in W\setminus\{w_1,\ldots,w_{\delta(H)-1}\}$, any map $\varphi: V(H) \to V(G)$ that sends a vertex of minimum degree to $u$, its neighbourhood to  $\{w,w_1,\ldots,w_{\delta(H)-1}\}$, and the remaining vertices of $H$ to the rest of $W$ is an embedding of $H$ minus an edge into $\final{G}_H$.
    Thus $uw$ completes a copy of $H$ and there must be a complete bipartite graph between $U$ and $W$ in $\final{G}_H$. All edges in $\binom{U}{2}$ then complete some copy of $H$ with other vertices chosen from $W$ and so indeed $U\cup W$ is a clique in $\final{G}_H$.
    \end{proof}

We will use this to trigger new vertices to join a clique with $V(G)$.  However, we cannot add $\delta(H)-1$ edges from every new vertex to $V(G)$ without interfering with the $H$-process on $G$. We will be able to do this for some subset $U$ of the new vertices though. This leads us to consider a new gadget which is similar to a  simple $H$-chain, but has the added property that if we trigger the infection of a clique on some vertices $U$ in the middle of the chain then the infection will cause the whole gadget to percolate. The following lemma details what we need from this gadget.

    \begin{lemma}[The gadget graph]\label{lem:gadget}
    For any $(2,1)$-inseparable $H$ on $k$ vertices, there exists a graph $\Gamma$ on $\ell\leq k^{4k}$ vertices together with an edge $e\in E(\Gamma)$ and a non-edge $f\in \binom{V(\Gamma)}2 \setminus E(\Gamma)$ such that
        \begin{enumerate}
        \item $\Gamma-e$ is $H$-stable (no edge is added in the $H$-process on $\Gamma-e$) and $f\in E(\final{\Gamma}_H)$,
        \item $\mathrm{dist}_\Gamma(e,f) \geq k$,
        \item there is  $U\subseteq V(\Gamma)$ such that $\Gamma \cup \binom{U}2$ is $H$-percolating and $\mathrm{dist}_\Gamma(U,e\cup f) \geq k$.
        \end{enumerate}
    \end{lemma}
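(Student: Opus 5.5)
We build $\Gamma$ from simple $H$-chains. Fix an edge $xy\in E(H)$ and a vertex $v$ of minimum degree, and take a long simple $H$-chain $(H_i,e_i)_{i\in[L]}$ with $L$ of order $k$ (say $L=4k$). Set $e:=e_0\in E(H_1)\setminus\{e_1\}$, chosen disjoint from $e_1$, and take $\Gamma:=\bigcup_{i}H_i-\{e_1,\dots,e_L\}$ with $f:=e_L$. Property (1) then follows as in the discussion after Observation~\ref{obs:insep simple}. In $\Gamma-e$ each $V(H_i)$ hosts only $e(H)-2$ edges. By Observation~\ref{obs:insep simple}, any copy of $H$ minus an edge in $\bigcup H_i$ lies inside a single $H_i$, so nothing is ever added and $\Gamma-e$ is $H$-stable. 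On the other hand, in $\Gamma$ itself the process infects $e_1,e_2,\dots,e_L=f$ in turn, because $H_i-e_i$ is present once $e_{i-1}$ is. Since consecutive copies share only an edge, any path from the chain's start to its end must pass through the vertices of each $e_i$. Hence $\dist_\Gamma(e,f)\ge L-O(1)\ge k$, which gives (2).

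For (3), take $U:=V(H_j)$ for a middle index $j\approx L/2$. Then $\dist_\Gamma(U,e\cup f)\ge k$ by the same cut argument. It remains to show that $\Gamma\cup\binom U2$ percolates. Adding $\binom U2$ makes $U$ a clique of size $k\ge v(H)-1$. Every vertex of $H_{j\pm1}\setminus e_{j\pm1}$ is adjacent to both endpoints of $e_{j\pm1}\subseteq U$, and also to its other $H_{j\pm1}$-neighbours. That is not yet $\delta(H)-1$ neighbours in $U$, so Lemma~\ref{lem:cliques} does not apply directly.

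The main obstacle is exactly this spreading of the clique step by step along the chain. A bare simple chain need not propagate a clique: $(2,1)$-inseparability makes $H$ $3$-connected, so $\delta(H)\ge3$, yet a new vertex sees only two vertices of the current clique through the shared edge. My plan is to enrich the chain and give up literal simplicity. Inside each $H_i$, and also across neighbouring copies, I add extra edges so that every vertex of $H_{i+1}$ has at least $\delta(H)-1$ neighbours in $V(H_i)$, for example by making consecutive copies share $\delta(H)-1$ vertices' worth of adjacency through an auxiliary block. Each block is chosen as a constant-size graph $B$ on at most $k^{O(k)}$ vertices. The block must satisfy two conditions: (a) $B\cup\binom{V(H_i)}{2}$ makes $V(H_{i+1})$ join the clique, which follows from Lemma~\ref{lem:cliques} once the degrees are arranged; and (b) $B$ together with its neighbours contains no copy of $H$ minus an edge other than those inside the designated $H_i$.

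Condition (b) is the delicate point. I would verify it using $(2,1)$-inseparability: any copy of $H-e'$ straddling a separator made of two vertices and one edge would be disconnected, which is a contradiction. The extra vertices must therefore be attached through separators of that form. Concretely, I would replace each link by a small simple chain of length about $k$ of copies of $H$ whose vertices are joined to the previous block only by a shared edge. I would then add, for each vertex, $\delta(H)-1$ edges into the block two steps earlier, and check that every new adjacency is separated by at most two vertices and one edge from the chain edges $e_i$. Once the clique has propagated to both ends, Lemma~\ref{lem:cliques} absorbs all remaining vertices, and the whole of $\Gamma\cup\binom U2$ percolates. The vertex count is at most $k\cdot L\cdot k^{O(k)}\le k^{4k}$.
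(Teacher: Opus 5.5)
\textbf{There is a genuine gap in part (3).} Your treatment of (1) and (2) for the bare chain is fine. You also correctly diagnose that, with $U=V(H_j)$, the bare chain does not percolate. From that point on, however, the proposal is a plan rather than a proof. The enriched graph is never specified. Condition (b) — that the degree-boosting edges create no copy of $H$ minus an edge outside a single $V(H_i)$ — is the whole content of the lemma, and you only assert it.

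The verification you sketch also cannot work for the placement you describe. Once vertices of $H_{i+1}$ get extra neighbours in $V(H_i)$ (or in a block two steps back), the endpoints of $e_i$ no longer separate the two sides. The sides are then joined by several edges, and $(2,1)$-inseparability, which tolerates only one extra edge, gives you nothing. Stability genuinely fails for natural choices. Take $H=K_5$ with $V(H_i)=\{1,\dots,5\}$, $e_{i-1}=12$, $e_i=45$, $V(H_{i+1})=\{4,\dots,8\}$ and $e_{i+1}=78$. The vertices $6$ and $7$ already see $4$ and $5$, and each needs one more neighbour in $V(H_i)$. If both take $3$, then $\{3,4,5,6,7\}$ spans $K_5$ minus $e_i$ in $\Gamma-e$, so $e_i$ is infected in round one and (1) fails. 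For larger $\delta(H)$ the required density between consecutive blocks only makes this harder.

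There are two further problems. Edges pointing ``earlier'' spread a clique from a middle $U$ in one direction only. And your distance bounds in (2) and (3) relied on the $2$-vertex cuts $e_i$, which the new edges bypass, so they would have to be redone.

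\textbf{The missing idea.} The paper does not propagate a clique along the chain at all. It takes a very long simple chain with vertices $v_1,\dots,v_r$, $r\ge 2k^{3k+3}$, and sets $e=v_1v_2$ and $f=e_\ell$. It lets $U$ be the whole middle segment $\{v_i: k^{3k}<i\le r-k^{3k}\}$. Only the $k^{3k}$ vertices at each end receive $\delta-1$ extra edges, and these point into the middle: $v_i$ is joined to the $v_j$ with $j\in k^2\cdot[(i-1)(\delta-1)+1,\,i(\delta-1)]$, mirrored at the other end.

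These edges are long and spread out, since their far endpoints are distinct multiples of $k^2$. Suppose a copy $F$ of $H$ minus an edge uses such edges, and let $e'$ be the one whose far endpoint $v_j$ has the largest index. Within $k-1$ steps in $F-e'$ one cannot leave the index window $[j-(k-1)^2,\,j+(k-1)^2]$, while the other endpoint of $e'$ has index at most $j/k^2$. Hence $e'$ is a bridge of $F$, so $H$ minus two edges is disconnected, contradicting $(2,1)$-inseparability.

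The rest follows from the same placement:
\begin{itemize}
\item Along these edges the index grows at most geometrically ($j\le i(\delta-1)k^2$), which gives $\dist_\Gamma(e,U)\ge k$ and $\dist_\Gamma(f,U)\ge k$.
\item Percolation of $\Gamma\cup\binom U2$ follows from Lemma~\ref{lem:cliques} applied to $v_{k^{3k}},v_{k^{3k}-1},\dots,v_1$, and symmetrically at the other end.
\end{itemize}

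What your sketch lacks is a placement of the degree-boosting edges that makes each of them a bridge in every subgraph on at most $k$ vertices. Nothing in your local construction ensures this.
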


    \begin{proof}
    Let $\ell := \ceil{\frac{2k^{3k+3}-2}{k-2}}$, and let $(H_i,e_i)_{i\in[\ell]}$ be a simple $H$-chain of length $\ell$.
    Label the vertices of the chain by $v_1,\ldots,v_r$, where $r:= 2+(k-2)\cdot\ell \geq 2k^{3k+3}$, such that the $i$\textsuperscript{th} copy of $H$ has vertex set $\Set{ v_j : j\in [(i-1)\cdot(k-2)+1,i\cdot (k-2)+2]}$.
    Write $\delta := \delta(H)$.
    Define $\Gamma$ by
    \[
        V(\Gamma) = \Set{v_1,\ldots,v_r} \quad \mbox{ and } \quad 
        E(\Gamma) = \cup_{i=1}^\ell E(H_i) \setminus \Set{e_1,\ldots,e_\ell} \cup E' \cup E'',
       \]
    where
        \begin{align*}
        E' &= \Set*{ v_iv_j : i\in [k^{3k}], j \in k^2\cdot[(i-1)(\delta-1)+1,i(\delta-1)]} \\
        E'' & =\Set*{ v_{r+1-i}v_{r+1-j} : i\in [k^{3k}], j \in k^2\cdot[(i-1)(\delta-1)+1,i(\delta-1)]},
        \end{align*}
    and let $e := v_1v_2$, $f:= e_\ell$, and $U := \Set{v_i : i\in[k^{3k}+1,r-k^{3k}]}$.

    To show (1), note that $e_i$ appears by step $i$ in the  $H$-process on $\cup_{i=1}^\ell H_i - \Set{e_1,\ldots,e_\ell}$ and in particular $f$ appears by step $\ell$.
    Since $\cup_{i=1}^\ell H_i - \Set{e_1,\ldots,e_\ell} \subseteq \Gamma$ we obtain that $f\in E(\final{\Gamma}_H)$.
    To see that $\Gamma-e$ is $H$-stable we show that there are no copies of $H$ minus an edge in $\Gamma-e$.
    Indeed, if $F$ is such a copy then $F$ is not contained in $\cup_{i=1}^\ell H_i - \Set{e,e_1,\ldots,e_\ell}$ because $(H_i,e_i)_{i\in[\ell]}$ is a simple $H$-chain (as noted after Observation \ref{obs:insep simple}).
    For this reason at least one edge of $F$ lies in $E'\cup E''$.
    Without loss of generality we can assume that $E(F)\cap E' \neq \varnothing$ and $E(F)\cap E''=\varnothing$ (using here that the largest vertices in an edge of $E'$ are of  distance greater than $k$ in $\Gamma$ from the smallest vertices in an edge of $E''$).
    Pick the largest $j\in [r]$ such that $v_j$ is an endpoint of an edge $e'$ in $E(F)\cap E'$. 
   Note that   $N_{F-e'}^{k-1}(v_j)\subseteq \{v_i : |i-j| \leq (k-1)^2  \}$ where $N_{F-e'}^{k-1}(v_j)$ denotes  all the vertices that can be  reached from $v_j$ by a path   in $F-e'\subseteq \Gamma-e'$ with at most $k-1$ edges. We used here that any edge $e''\in E'$ with endpoints $v_{i''},v_{j''}$ and $i''<j''<j$ must have $j''\leq j-k^2$. 
    Hence there can be no path between $v_j$ and the other endpoint of $e'$ in $F-e'$, which contradicts the assumption that $H$ is $(2,1)$-inseparable.

    As to (3), by construction every vertex $v_i$ with $i\leq k^{3k}$ has at least $\delta-1$ neighbours in $\{ v_{i+1},\ldots,v_{\floor{r/2}} \}$,
    and for $i\geq r-k^{3k}+1$, $v_i$ is adjacent to at least $\delta-1$ vertices in $\{ v_{\ceil{r/2}},\ldots,v_{i-1} \}$.
    By repeated applications of Lemma \ref{lem:cliques}, we see that $\Gamma \cup \binom{U}2$ indeed $H$-percolates.
    Let $s := \dist_{\Gamma}(e,U)$ and choose a shortest path $v_{i_1}\ldots v_{i_s}$ path from $e$ to $U$.
    For any $i\in [k^{3k}]$ and $j\in [r]$ with $v_iv_j\in E(\Gamma)$, we have $j \leq i(\delta-1)k^2$.
    Therefore,
        \begin{equation*}
        k^{3k} < i_s = i_1\cdot\frac{i_2}{i_1}\cdot\ldots\cdot \frac{i_s}{i_{s-1}} \leq 2 \cdot (\delta-1)^{s-1}k^{2(s-1)} < k^{3s}
        \end{equation*}
    and hence $s \geq k$.
    Similarly, $\dist_\Gamma(f, U) \geq k$.

    It remains to prove (2).
    As there are no edges between $\{ v_1,\ldots,v_{k^{3k}} \}$ and $\{ v_{r-k^{3k}+1},\ldots,v_r \}$, any path from an endpoint of $e$ to an endpoint of $f$ must use a vertex from $U$.
    This implies $\dist_\Gamma(e,f) \geq \dist_{\Gamma}(e, U) \geq k$.
    \end{proof}

    This puts us in good stead but we still need to find a way to attach the set $U$ for each new gadget to $\delta(H)-1$ vertices in $V(G)$ without interfering with the process on $G$. To do this, we simply add an independent set $I$ to $G$ and consider the auxiliary $H$-percolating graph $G'$ with linearly many edges as a graph on vertex set $V(G)\cup I$. Therefore our gadgets will force that $V(G)\cup I$ hosts a complete graph and we can attach the vertex sets $U$ to vertices in $I$ to trigger the percolation on the individual gadgets. One more idea is needed to guarantee that the edges between $U$ and $I$ for different gadgets do not interact with each other to give unwanted copies of $H$ minus an edge. We will use a bipartite graph of large girth to place these edges between gadgets and  $I$, recalling that the \textit{girth} of a graph is the length of the shortest cycle in the graph.  Specifically, we need the following.

     \begin{lemma}\label{lem:auxiliary_bipartite_graph}
    Let $k, d \in \N$ be fixed.
    Provided that $n\in\N$ is sufficiently large, there exists a bipartite graph $B$ with partite sets $X$, $Y$ of sizes $|X| = n$, $|Y| = 2kn$ such that every vertex in $Y$ has degree $d$ and the girth of $B$ is at least $k+1$.
    \end{lemma}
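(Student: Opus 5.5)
We need a bipartite graph $B$ with parts $X$ ($|X|=n$) and $Y$ ($|Y|=2kn$), every $y\in Y$ of degree exactly $d$, and girth at least $k+1$. Since $k,d$ are fixed and we only need $n$ sufficiently large, the plan is a greedy construction. We add the vertices of $Y$ one at a time. Each new vertex gets $d$ neighbours in $X$ that are pairwise far apart in the graph built so far. Bipartite graphs have only even cycles, so it suffices to avoid cycles of length at most $k$.

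Suppose the current graph is $B'$, in which every vertex already placed in $Y$ has degree $d$. A new vertex $y$ joined to $x_1,\dots,x_d\in X$ creates a cycle of length at most $k$ only if some pair $x_i,x_j$ is joined by a path of length at most $k-2$ in $B'$. So it suffices to choose $x_1,\dots,x_d$ greedily, each at distance greater than $k-2$ in $B'$ from the previously chosen ones. This choice is possible as long as every ball of radius $k-2$ in $B'$ around a vertex of $X$ contains fewer than $n/d$ vertices of $X$. Then $d$ balls cannot cover $X$.

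The key step, and the main obstacle, is controlling the sizes of these balls. Vertices of $X$ could accumulate large degree, and then the balls grow large. I would handle this by also bounding degrees in $X$ during the greedy process. The average degree of $X$ in the final graph is $2kd$. So we additionally require that each chosen $x_i$ currently has degree less than $D:=4kd$.

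We first check this requirement never blocks a choice. At any time at most $2kdn/D=n/2$ vertices of $X$ have degree at least $D$. So at least $n/2$ vertices of $X$ are still available. Moreover, all degrees in $B'$ are then at most $\max(d,D)=D$, so a ball of radius $k-2$ contains at most $\sum_{t\le k-2}D^t\le 2D^{k-2}$ vertices.

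We then pick $x_1,\dots,x_d$ one by one. Each time, we exclude the at most $d\cdot 2D^{k-2}$ vertices lying in the balls around earlier choices. At least $n/2-2dD^{k-2}>0$ candidates remain once $n$ is large relative to $k$ and $d$.

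Joining $y$ to the chosen vertices keeps the girth above $k$. The argument is the one above: any new short cycle passes through $y$, hence contains a path between two of the $x_i$ of length at most $k-2$ in $B'$, which we excluded. Repeating this for all $2kn$ vertices of $Y$ produces the desired $B$, with every vertex of $Y$ of degree exactly $d$ and girth at least $k+1$. A probabilistic deletion argument would also work, but the greedy construction is cleaner and guarantees the exact degree $d$ on $Y$ directly.
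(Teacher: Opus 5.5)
Your greedy construction is correct, and it takes a genuinely different route from the paper's. The paper sketches a probabilistic alteration argument. It takes a random bipartite graph between $X$ and a larger set $Y'$ of size $(2k+1)n$, with edge probability $p=n^{-1+1/(2k)}$. Chernoff's bound shows that every vertex of $Y'$ has degree at least $d$, and Markov's inequality shows there are at most $n$ cycles of length at most $k$. It then deletes the vertices of $Y'$ lying on short cycles and trims surplus vertices and edges to reach exactly $2kn$ vertices of degree exactly $d$.

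You instead build $B$ deterministically, one vertex of $Y$ at a time, and your bookkeeping holds up:
\begin{itemize}
\item You only attach to vertices of current degree below $D=4kd$, so all degrees stay at most $D$.
\item Since $e(B')\le 2kdn$, at most $n/2$ vertices of $X$ are ever blocked by the cap.
\item Balls of radius $k-2$ therefore have at most $2D^{k-2}$ vertices, so at least $n/2-2dD^{k-2}>0$ candidates remain at each choice.
\item Any new cycle must pass through the new vertex $y$, so it has length at least $\mathrm{dist}_{B'}(x_i,x_j)+2\ge k+1$.
\end{itemize}

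Your approach buys an elementary argument with an explicit threshold (roughly $n>4d(4kd)^{k-2}$) and exact $Y$-degrees without any trimming. It also gives the extra property that $B$ has bounded maximum degree, whereas in the random graph the $X$-degrees grow like $n^{1/(2k)}$. The paper's approach is quicker to state given standard probabilistic tools. Neither needs the algebraic high-girth constructions cited in the paper, since $k$ and $d$ are constants.

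One cosmetic point: for $k=1$ your exclusion radius $k-2$ is negative and does not force the $x_i$ to be distinct, so take radius $\max(k-2,0)$. This is irrelevant for the application, where $k=v(H)\ge 5$.
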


Bipartite graphs with many edges and high girth have been studied thoroughly in the literature, see for example \cite{lazebnik1995new,lazebnik_connectivity_2004,furedi2013history}. In fact, we will use them again in our construction of \textit{line chains} in Section \ref{sec:connect}.
We note that for our purposes we do not require the full strength of those results because the parameters $k$ and $d$ are fixed constants. Indeed, Lemma \ref{lem:auxiliary_bipartite_graph} can be obtained easily by considering a random bipartite graph with parts $X$ of size $n$ and $Y'$ of size $(2k+1)n$, with each edge present independently with probability $p=n^{-1+1/(2k)}$. By an application of Chernoff's theorem, a.a.s.\ all vertices in $Y'$ will have degree at least $d$ and by Markov's inequality, a.a.s.\ there will be at most $n$ cycles of length at most $k$. Therefore a.a.s.\ both these events happen and we can obtain the graph $B$ in Lemma \ref{lem:auxiliary_bipartite_graph} by first deleting from $Y'$ any vertex that participates in a cycle of length at most $k$ and then deleting further vertices of $Y'$ and edges to get $Y$ of size exactly $kn$ with each vertex incident to $d$ edges.  Finally then, we give the details of the proof of Theorem \ref{thm:slow perc}
	
    \begin{proof}[Proof of Theorem \ref{thm:slow perc}]
    Let $H$ be a $(2,1)$-inseparable $k$-vertex graph and  $G$  an $n$-vertex graph with running time $\tau_H(G) = M_H(n)$.
    We  construct a super-graph $\tilde{G} \supseteq G$  such that $\tau_{H}(\tilde G) \geq \tau_{H}(G)$ and $\tilde G$ is $H$-percolating.
    Introduce an independent set $I$ of $n$ new vertices, and let $G'$ be an auxiliary graph with vertex set $V(G)\cup I$ such that $|E(G')|\leq 2kn$ and $\final{G'}_H = K_{2n}$.
    Let $\Gamma$ be the graph from Lemma \ref{lem:gadget} with $\ell:=v(\Gamma)$ vertices, and for each $f\in G^*:=G'\setminus \final{G}_H$ introduce a set $W^f$ of $\ell-4$ new vertices.
    Fix an edge $e_\tau \in G_\tau\setminus G_{\tau-1}$ and place a copy $\Gamma^f$ of $\Gamma$ on $W^f\cup e_\tau \cup f$  such that for some subset $U^f\subseteq W^f$, (1)\textendash (3) of Lemma \ref{lem:gadget} hold for $e_\tau, f, U^f$ (with $e_\tau$ playing the role of $e$).

    Consider  an auxiliary bipartite graph $B$ between $I$ and $\Set{ U^f : f\in G^*}$ such that
        \begin{enumerate}[(i)]
        \item $\deg_{B}(U^f) = d:=|U^f|\cdot (\delta(H)-1)$ for all $f\in G^*$,
        \item $B$ has girth at least $k+1$.
	\end{enumerate}
    Such an auxiliary graph exists by Lemma \ref{lem:auxiliary_bipartite_graph} whenever $n$ is sufficiently large, deleting arbitrary vertices from the set $Y$ to get a set of size $e(G^*)$, if necessary. 
    For each $f\in G^*$ choose a partition $N_{B}(U^f) = \cup_{u\in U^f}N_u$ into $|U^f|$ sets of size $\delta(H)-1$.
    Define $\tilde G$ by
        \begin{align*}
        V(\tilde G) &= V(G) \cup I \cup \left(\cup_{f\in G^*} W^f\right), \\
        E(\tilde G) &= E(G) \cup \left(\cup_{f\in G^*} \left( E(\Gamma^f-e_\tau) \cup \{ vu: u \in U^f, v \in N_u \} \right)\right).
        \end{align*}
    With the above choices there is an edge in the auxiliary graph $B$  between $v\in I$ and $U^f$ whenever $v$ has a $\tilde G$-neighbour in $U^f$. Note also that $\tilde{G}$ has $v(\tilde{G})\leq 2n+ 2kn(\ell-4)\leq 4k^{4k}n$ vertices, as required. 

    First we show that $\tilde G$ percolates.
    We have $G_\tau \subseteq \tilde G_\tau$ since $G\subseteq \tilde G$.
    For each $f\in G^*$, $\Gamma^f\subseteq \tilde G_\tau$ and hence $\final{\Gamma^f}_H\subseteq\final{\tilde G}_H$.
    In particular, $f\in E(\final{\tilde G}_H)$.
    Therefore, $G'\subseteq \final{\tilde G}_H$.
    Recall that $G'$ is $H$-percolating and so $V(G')=I\cup V(G)$ is a clique in $\final{\tilde G}_H$.
    Every vertex in $\cup_{f\in G^*}U^f$ has $\delta(H)-1$ $\tilde G$-neighbours in $V(G')$, so $V(G')\cup\left(\cup_{f\in G^*}U^f\right)$ is a clique in $\final{\tilde G}_H$ by Lemma \ref{lem:cliques}.
    Since $\Gamma^f\cup\binom{U^f}2$ percolates, $V(\Gamma^f)$ is a clique in $\final{\tilde G}_H$.
    Each of the sets $V(\Gamma^f)$, $f\in G^*$, intersects $V(G')\cup\left(\cup_{f\in G^*}U^f\right)$ in at least $\delta(H)-1$ vertices.
    Therefore, repeated applications of Lemma \ref{lem:cliques} guarantee that $\final{\tilde G}_H$ is a complete graph.

    Finally, we prove that $\tau_H(\tilde G) \geq \tau$. We  show by induction that for all $0\leq t\leq \tau$ \[E(\tilde{G}_t)= E(G_{t})\cup (E(\tilde{G})\setminus E(G)).\] For $t=0$, this  simply holds by definition. For larger $t$, suppose  that we have proved the induction hypothesis up to step $t-1$. This means that the only edges that have been added  are the edges in the $H$-process on $G$ up to time $t-1$. In particular, the neighbourhood of any vertex $v\in I$ in $\tilde{G}_{t-1}$ is the same as its neighbourhood in $\tilde{G}$.
 
    Let $F$ be a copy of $H$ minus an edge in $\tilde G_{t-1}$.
    Suppose first that $V(F)$ intersects $I$. As $N_{\tilde G_{t-1}}(v)= N_{\tilde{G}}(v)\subseteq \cup_{f\in G^*}U^f$, the
     connectedness of $F$ and the inequality $\dist_{\Gamma^f}(e_\tau\cup f, U^f) \geq k$  imply that $V(F) \subseteq I\cup\left(\cup_{f\in G^*}W^f\right)$.
    Every $v\in V(F)\cap I$ has at most one $F$-neighbour in $U^f$ for each $f\in G^*$, and hence $|\{ f\in G^*: U^f\cap N_{F}(v) \neq \varnothing \}| \geq \delta(F) > 2$, using that $H$ is $(2,1)$-inseparable in the last inequality here. Moreover, for every $f\in G^*$ such that $U^f\cap V(F)\neq \varnothing$, we must have that there are at least two $B$-neighbours of $U^f$ in $I$ as otherwise we could disconnect $F$ by removing one vertex, contradicting the $(2,1)$-inseparability of $H$. 
    Therefore we have that    $B[(I\cap V(F))\cup\{U^f:U^f\cap V(F)\neq \varnothing\} ]$ has minimum degree greater than two and therefore contains a cycle. Each vertex in this cycle corresponds to (at least one) vertex of $V(F)$ and these vertices are distinct. Hence the cycle has length at most $v(F)=k$ which contradicts the assumption that the girth of $B$ is at least $k+1$.

    We have shown that $I\cap V(F) = \varnothing$, or equivalently, $V(F) \subseteq V(G) \cup \left(\cup_{f\in G^*}W^f\right)$.
    If $V(F)\cap W^f\neq \varnothing$ for some $f\in G^*$ and $V(F)\not\subseteq V(\Gamma^f)$ then either $f\cap V(F)$ or $e_\tau\cap V(F)$ is a vertex-cut of $H'$ because $\dist_{\Gamma^f}(e_\tau,f) \geq k$.
    This, however, would contradict the assumption that $H$ is $(2,1)$-inseparable.
    Consequently, $V(F)\subseteq V(G)$ and $F\subseteq G_{t-1}$ by the induction hypothesis. Therefore any edge that is added during the step $t$ of the $F$-process on $\tilde G$ belongs to $G_t\setminus G_{t-1}$ and we are done.
    \end{proof}

We remark that the constant $C=C(H)$ appearing in Theorem \ref{thm:slow perc} is probably unnecessary and a similar result should be achievable with a new starting graph $\tilde{G}$ on $(1+o(1))n$ vertices. An approach to do this by packing gadgets onto $o(n)$ vertices using a high girth (approximate) design was suggested by the anonymous referee. We did not pursue this direction, but we believe that it should indeed be possible. 

\section{(Almost) quadratic running times} \label{sec:quad}

In this section, we begin our exploration of $M_H(n)$ for general infection rules $H$. Our first point of interest will be to what extent the behaviour of large cliques (of size at least five) generalises to other infection rules $H$. The results of this section are from our paper \cite{FMSz3} and are achieved through a program of building a general framework for chain constructions,  in particular exploiting the ideas behind the ladder chains and dilation chains that we already encountered.  This method turns out to be  flexible and we are able to show extremely slow, that is (almost) quadratic, running times for a large array of infection rules $H$,  including many that are far from resembling the large cliques that we set out to generalise. 

For one of our almost quadratic examples, namely the wheel graph, we are also able to complement our lower bound with a non-trivial sub-quadratic upper bound. This establishes a new behaviour for maximum running times, exactly the type Conjecture~\ref{conj:K5} predicts for $K_5$.

\subsection{Minimum degree}

In our first generalisation of the results of Balogh et al.~\cite{balogh2019maximum} for cliques, we establish Dirac-type conditions for quadratic and almost quadratic running times.

    \begin{theorem}[\cite{FMSz3}] \label{thm;dense quad}
    Suppose $5\leq k\in \N$ and  $H$ is a graph with $v(H)=k$. We have that 
        \begin{numcases}
        {M_H(n)\geq }
        \Omega(n^2) & if  $k\geq 6$ and $\delta(H)> 3k/4$; \label{thm:dense}\\ 
        n^{2-O(1/\sqrt{\log n})}=n^{2-o(1)} & if  $\delta(H)\geq k/2+1$. \label{thm: min deg almost quad}
        \end{numcases}   
    \end{theorem}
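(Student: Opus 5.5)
Both bounds will come from chain constructions, using a generalised form of Lemma \ref{lem:chain lower}. The lemma goes through for any $H$ (not only $K_k$) under the weakened condition \ref{(dag')} together with the following analogue of \ref{(*)}:
\begin{quote}
every copy of $H-e$ (for any $e\in E(H)$) in $G=\cup_i H_i$ lies inside some $H_i$.
\end{quote}
The proof is verbatim: by induction only $e_i$ is added at step $i$.

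The first preliminary is that $\delta(H)\ge k/2+1$ forces $(2,1)$-inseparability. Any two vertices have at least $2\delta-(k-2)\ge 4$ common neighbours, counted among the other $k-2$ vertices. Deleting two vertices and one edge therefore cannot separate two vertices, since a common neighbour survives and at most one of the two paths through it is cut. So Observation \ref{obs:insep simple} applies, and simple $H$-chains have no unwanted copies of $H-e$. Linking chains then work exactly as in Section \ref{sec:K5}. This reduces both statements to superimposing many linear-length simple $H$-chains on $O(n)$ vertices, pairwise edge-disjoint, so that no copy of $H-e$ uses edges of two different chains.

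For \eqref{thm: min deg almost quad} I would imitate the dilation chains. Lay out a simple $H$-chain along $\Z_p$: the $i$-th copy of $H$ sits on a block of $k$ consecutive residues, consecutive blocks overlap in two vertices, and the overlap pair is the edge $f_i$. Dilate by $a\in A$, with $A$ given by Theorem \ref{lem:beh stre} for coefficients bounded by $k$ instead of $4$. Every edge of the $a$-chain then joins $x,y$ with $x-y=\alpha a$ and $0<|\alpha|<k$, so edge-disjointness and the simplicity of the linking chains follow as in Corollaries \ref{cor:disj} and \ref{obs: linking simple}. The key step is an analogue of Fact \ref{obs: K5 col}: any non-monochromatic colouring of $H-e$ contains a non-monochromatic triangle. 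Colour classes are unions of chain edges, and a triangle with edges from distinct dilations gives a forbidden three-variable equation. To prove this fact, suppose two edges of different colours meet at a vertex $v$, say $vx$ and $vy$. If $xy$ is present we get a non-monochromatic triangle $vxy$ immediately. Otherwise $x$ and $y$ still have at least $2(\delta-1)-(k-2)\ge 2$ common neighbours $w$ in $H-e$, even after losing one degree to $e$. Choosing such a $w$ gives triangles $vxw$ or $vyw$, or a 4-cycle $vxwy$ whose chord $vw$ exists once one checks degrees. The colours $c(vx)\ne c(vy)$ propagate around this configuration to force a bicoloured triangle. Since $H-e$ is connected, some vertex sees two colours whenever the colouring is non-monochromatic, which finishes the fact. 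This gives $\tau\approx pq/k = n^{2-o(1)}$.

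For \eqref{thm:dense} I would use ladder chains instead. The vertices split into a left and a right side, each carrying a "path of cliques" pattern, and slopes are taken from an arithmetic progression $A$ with common difference a large constant. This gives linearly many slopes with no additive-combinatorial loss, hence $\Omega(n^2)$. The copies of $H$ are split across the sides, matching the $K_6$ picture of three vertices left and three right. The main obstacle is the analogue of the structural claim that any copy $F$ of $H-e$ must straddle the sides in a rigid way, so that its cross edges force nearby slopes. The condition $\delta>3k/4$ is what drives this. Each vertex of $F$ has more than $3k/4-1$ neighbours, while the side pieces are sparse (bounded clique-like blocks), so most of its neighbours must be across. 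Counting then pins down that $F$ occupies consecutive blocks on each side. Two cross edges of different slopes would then give a short cycle whose slopes differ by a bounded amount, contradicting the spacing of $A$. I expect this case analysis to be the delicate part, and I would first try to prove a lemma that every copy of $H-e$ in the superimposed graph has bounded "block span" on each side.
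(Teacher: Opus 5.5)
Your plan for the second bound follows the paper's route: show $(2,1)$-inseparability, then superimpose dilation chains indexed by a Behrend-type set. However, the key step fails. The analogue of Fact~\ref{obs: K5 col}, asserting a non-monochromatic triangle in every non-monochromatic colouring of $H-e$, is false under $\delta(H)\ge k/2+1$.

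Here is a counterexample. Let $H$ consist of two disjoint triangles $T_1,T_2$ and an independent set $I$ of size $4$, completely joined to $T_1\cup T_2$. Then $k=10$ and $\delta(H)=6=k/2+1$. Colour red the edges inside $T_1$ or between $T_1$ and $I$, and colour the rest blue. A triangle has at most one vertex in $I$ and cannot meet both $T_1$ and $T_2$. So every triangle of $H-e$ is monochromatic, for every $e$, although the colouring is not. Your argument breaks exactly here: for $v,w\in I$, $x\in T_1$, $y\in T_2$, the $4$-cycle $vxwy$ has no chord $vw$, and no degree count can produce one.

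This is why the paper (Theorem~\ref{thm:almostquadratic}) uses the Behrendian property of Definition~\ref{def:behre} instead. That property asks for a non-monochromatic cycle that is a union of two or three monochromatic paths, such as $xvyw$ above. It still yields a forbidden equation:
\begin{itemize}
\item Along a monochromatic path inside the $a_j$-chain, the differences telescope to $\beta a_j$.
\item $\beta\not\equiv 0\pmod p$ because the path's endpoints are distinct, and $|\beta|<k^2$.
\item Summing around the cycle gives $\beta_1a_{j_1}+\beta_2a_{j_2}+\beta_3a_{j_3}\equiv 0$ with all $\beta_i\neq 0$.
\item Take a Behrend set as in Theorem~\ref{lem:beh stre} with coefficient bound $k^2$ instead of $4$. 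It forces all these dilations to be equal, contradicting that consecutive paths have different colours.
\end{itemize}
What you actually need to prove is that $H-e$ is Behrendian for every $e$ when $\delta(H)\ge k/2+1$. The paper notes this is not immediate, and your triangle argument cannot supply it.

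Two smaller repairs are also needed:
\begin{itemize}
\item Your common-neighbour count only holds for non-adjacent pairs. Adjacent $u,v$ are only guaranteed $2\delta-k\ge 2$ common neighbours, and both may be deleted when the deleted edge is $uv$. Route through a third vertex instead.
\item The linking argument of Section~\ref{sec:K5} uses a $K_5^-$-specific fact: its non-adjacent pair has three common neighbours. For general $H$, take for instance linking chains of length more than $2k$. Then a copy of $H-e$ reaching into a chain's interior is cut off from $W$ by the two vertices of an end edge, contradicting $(2,1)$-inseparability.
\end{itemize}

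For the quadratic bound you name the right construction (ladder chains with widely spaced slopes). But the lemma you defer is the entire proof, and degree counting is the wrong lever. A vertex of a copy $F$ of $H-e$ is only guaranteed about $k/4$ neighbours on the other side, which does not localise $F$.

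The paper derives this bound from Theorem~\ref{thm:quadratic}, using that $\delta(H)>3k/4$ with $k\ge 6$ gives $(\lceil k/2\rceil,1)$-inseparability. To see this, suppose deleting a set $Z$ with $|Z|\le\lceil k/2\rceil$ and one edge disconnected $H$. Since $\delta(H)>\lceil k/2\rceil+1$, no component can consist only of endpoints of the deleted edge. So each component contains a vertex off that edge and has more than $3k/4-|Z|+1$ vertices. Two such components would need more than $k-|Z|$ vertices, which is impossible.

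It is this inseparability that controls $F$. One side holds at most $\lfloor k/2\rfloor$ vertices of $F$, and deleting them cannot disconnect $F$. So the remaining vertices span a connected piece of the other side's path of blocks. Only from there can the slope spacing be used to rule out crossing copies.
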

These two bounds extend the results of \cite{balogh2019maximum} for $K_k$, $k\geq 6$ and for $K_5$, respectively, to arbitrary infection rules that are dense enough in terms of their minimum degree.   
 Later we will see examples of graphs with minimum degree $k/2$ and running time polynomially separated from quadratic, so the second minimum degree condition is optimal. In fact, as we will see, Theorem \ref{thm;dense quad} is best possible not only in guaranteeing almost quadratic running time, but even just super-linear.

A question that remains open is to fully understand the behaviour of quadratic maximum running time in terms of the minimum degree.

    \begin{prob}
    Determine the smallest constant $c_{\min}>0$, such that  any infection rule $H$ with $v(H)=k$ sufficiently large and  $\delta (H) > c_{\min} k$  has $M_H(n) = \Theta (n^2)$.
    \end{prob}
    
In particular is there a graph $H$ with $\delta(H)\geq k/2+1$ that exhibits a sub-quadratic running time $M_H(n)=o(n^{2})$?

\subsection{Random graphs}
Theorem~\ref{thm;dense quad} already establishes that cliques of size at least $5$ are far from unique in having maximum running time that is (almost) quadratic. Our next result shows that in fact {\em almost all} infection rules on $n$ vertices have quadratic running time. In fact, unlike Theorem~\ref{thm;dense quad}, where the density is a large constant, for  typical infection rules $H$, a very sparse density turns out to be sufficient to cause quadratic behaviour.

    \begin{theorem}[\cite{FMSz3}]\label{thm:random}
    Let $H=G(k,p)$. Then with probability tending to 1 as $k\to\infty$ we have that 
        \begin{numcases}
        {M_H(n)=}
        O(1) & if  $p \leq \left( \frac{1}{2} -o(1)\right) \log k/k$; \label{random 0}\\ 
        \Omega(n^2) & if  $p= \omega(\log k/k)$. \label{random 1}
        \end{numcases}   
    \end{theorem}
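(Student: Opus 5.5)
We treat the two regimes separately; they use different mechanisms.

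\textbf{The sparse regime \eqref{random 0}.} The aim is to show that a.a.s.\ $H=G(k,p)$ with $p\le(\frac12-o(1))\log k/k$ has a structural feature that forces every $H$-process to stop after $O(1)$ rounds. The natural feature is many isolated vertices, together with many pendant vertices (vertices of degree one). At this density the expected number of isolated vertices is $k(1-p)^{k-1}\approx k^{1/2+o(1)}\to\infty$, and a second moment computation gives concentration.

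The first key step is to understand what an isolated vertex does. If $H$ has an isolated vertex, then $H$-copies only need one extra vertex beyond a copy of $H$ minus that vertex. This by itself only matters when $n<k$, so it is not the real mechanism. The more useful fact is about a vertex of degree one: a copy of $H$ minus a pendant edge embeds wherever $H-v$ embeds and a spare vertex exists. So once some copy of $H$ minus its pendant vertex sits in $G_t$, every pair $uw$ with $u$ in the right position gets infected at once.

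I would therefore prove a general lemma. If $H$ has a pendant vertex, or two vertices of degree at most one in a suitable configuration, then $M_H(n)=O(1)$. The argument is that after the first round in which any edge is added, the process saturates huge stars, and then everything stabilises within a bounded number of further rounds. The second key step is to verify that $G(k,p)$ a.a.s.\ has the needed configuration in this range. Both isolated and pendant vertices are plentiful, since their counts are $\approx k^{1/2}$ and $\approx k^{1/2}\log k$ respectively. I expect the main obstacle of this part to be the deterministic lemma, namely controlling exactly when a low-degree-vertex rule must stabilise in constant time.

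\textbf{The dense regime \eqref{random 1}.} Here the plan is to invoke the general chain machinery of \cite{FMSz3}, specifically the ladder-chain construction generalised beyond $K_6$. This yields $\Omega(n^2)$ provided $H$ satisfies a $(2,1)$-inseparability-type condition, together with the ladder requirement that copies of $H$ minus an edge in the superimposed ladder structure must lie in a single $H_i$.

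The steps, in order, are as follows.
\begin{enumerate}
\item Show that $G(k,p)$ with $p=\omega(\log k/k)$ is a.a.s.\ highly connected, with connectivity of order $pk\to\infty$, and in particular $(2,1)$-inseparable.
\item Show that it a.a.s.\ has no small sparse cuts, in the sense that every set of $s$ vertices has many more than a constant number of edges leaving it. This rules out unwanted copies of $H-e$ that straddle two slopes of the ladder, because such a copy would need a bipartition of $V(H)$ crossed by few edges.
\item Verify the expansion statement by a union bound over subsets, using that $pk\gg\log k$.
\end{enumerate}

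The main obstacle I expect is matching the precise combinatorial hypothesis of the general ladder theorem to a random-graph property. The property must hold uniformly over all ways a copy of $H-e$ might embed across the triangle-sequences and slopes, and the union bound in step 3 must beat the number of such configurations.
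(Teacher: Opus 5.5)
\textbf{Sparse regime.} The deterministic lemma you propose is false, so this part of the proof fails. A pendant vertex does not force constant running time: Theorem~\ref{thm:simulate} gives a connected $H$ with $\delta(H)=1$ and $M_H(n)=\Omega(n^2)$. There the almost-universal vertex created by a pendant edge stays locked in one place and accelerates nothing.

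The statement itself also signals that this is the wrong mechanism. Isolated and pendant vertices exist a.a.s.\ in $G(k,p)$ up to $p\approx\log k/k$. Your argument would therefore give $O(1)$ in the window $(\tfrac12+o(1))\log k/k<p<\log k/k$, which the paper explicitly leaves open, and it never explains the constant $\tfrac12$.

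That constant is the threshold for an \emph{isolated edge}, i.e.\ a component of $H$ consisting of a single edge $e=ab$. The expected number of these is $\binom{k}{2}p(1-p)^{2(k-2)}\approx\tfrac{k^2}{2}pe^{-2pk}$, which tends to infinity throughout the stated range. (If $p=O(k^{-2})$, then $H$ is a.a.s.\ edgeless or a matching, so the claim is trivial or the same argument applies.)

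Given an isolated edge, the proof is short:
\begin{enumerate}
\item $H-e$ is $H'':=H-\{a,b\}$ together with the two isolated vertices $a,b$.
\item If $G_0$ is not $H$-stable, then $G_1$ contains a copy of $H$, hence a copy of $H''$ on some $(k-2)$-set $S$.
\item In round $2$ every pair disjoint from $S$ is infected, so $V\setminus S$ becomes a clique.
\item For $n\ge 2k$, every pair avoids some copy of $H''$ inside that clique, so $G_3=K_n$ and $M_H(n)\le 3$.
\end{enumerate}

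\textbf{Dense regime.} Here you follow the paper's route (ladder chains), but you miss a necessary ingredient. Your steps only control copies of $H-e$ that are \emph{not} contained in a single $H_i$. For non-complete $H$, the chain argument also needs that a copy of $H-e'$ lying inside one $H_i$ is never completed by a non-edge $f$ of $H_i$. Otherwise $H_i+f$ contains a new copy of $H$ through $f$, i.e.\ $f$ is infected in the $H$-process started from $H$ itself.

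So you need $H$ to be \emph{self-stable}. The paper obtains this from the Kim--Sudakov--Vu asymmetry theorem, and this, not connectivity, is where $p=\omega(\log k/k)$ is used.

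Connectivity and expansion do not imply self-stability. Suppose $x,y$ have neighbourhoods outside $\{x,y\}$ that differ only in one vertex $z\in N(y)$. Then the transposition $(x\,y)$ maps $H$ onto $H-yz+xz$, so $xz$ is infected in the process on $H$, however well connected $H$ is. For $p=c\log k/k$, the expected number of such near-twin pairs is of order $k^{2-2c}\log k$. Excluding them is therefore a genuine asymmetry estimate sensitive to the exact density, not something your union bound over cuts will produce.

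Note also that you cannot simply invoke the general ladder theorem (Theorem~\ref{thm:quadratic}). It needs $(\lceil k/2\rceil,1)$-inseparability, which fails when $p=o(1)$ because $\delta(H)\approx pk=o(k)$. The ladder analysis must be redone for random $H$, and your steps 1--3 only gesture at this.
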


Theorem \ref{thm:random} shows a stark phase transition in the behaviour of $H=G(k,p)$ around the connectivity threshold $\log k/k$.   We remark that $M_H(n)$ is not necessarily monotone with respect to $H$ and so there is no reason, a priori, to expect such a threshold type result for $H=G(k,p)$.

It is natural to ask whether this threshold is sharper than presented here. The $0$-statement, asserting that $M_H(n)$ is constant when $p$ is small, 
follows from the fact that an isolated edge occurs a.a.s.\ 
(see for example \cite[Theorem 5.4]{bollobas_random_2001}). The $1$-statement, giving quadratic running time for $p$ much larger than $\log k/k$, is more delicate and in particular, we use that $H=G(k,p)$ is self-stable, i.e. the $H$-process is stabilises on $H$ as the starting graph. For this we apply a result of Kim, Sudakov and Vu \cite{kim2002asymmetry}.  Whilst it is believable that results on the asymmetry of $G(k,p)$ can be strengthened to give that $H=G(k,p)$ is a.a.s.\ self-stable as soon as $p> \log k/k$, we did not find such a result in the literature and it is not immediate how one could prove such a statement. Nonetheless, we believe it is likely that our proof can be pushed to give  $M_H(n)=\Omega(n^2)$ a.a.s.\ already when $p>c_1\log k/k$ for any $c_1>1$. The range $\log k/2k<p<\log k/k$ seems to be the most interesting and it is unclear what to expect for the behaviour of $M_H(n)$ with $H=G(k,p)$. 
\begin{prob}
 Determine the typical behaviour of $M_H(n)$ for the random infection rule $H=G(k,p)$ in the range $\left(\frac{1}{2} + o(1)\right)\frac{\log k}{k}<p=O(\log k/k)$. 
\end{prob}
With the additional condition $p < \frac{\log k}{k}$, 
the infection rule $H$ in the problem will a.a.s.\  have one giant component and some isolated vertices, which can be ignored from the perspective of running times. There will also be vertices of degree one which suggests that our chain constructions will be useless for proving lower bounds on $M_H(n)$ as copies of $H$ minus an edge will be abundant and so no analogue of the key property \ref{(*)} from Lemma \ref{lem:chain lower} can hold. Conversely, as we will see in Section \ref{sec:small degrees}, the existence of degree 1 vertices alone is   not  enough to be able to prove  effective upper bounds on running times. 

It would also be very interesting to determine the typical behaviour of $M_H(n)$ for $H$ being a random $d$-regular graph, which we expect to be (almost) quadratic even for $d\geq 3$. Such results would also be helpful to be combined with the methods of Section~\ref{sec:graph parameters} to generate interesting novel behaviours.

\subsection{Constant average degree}
The intuitive rule of thumb for the possibility of slow running time is that the infection rule $H$ is dense. The rationale behind this is that dense graphs in chains are easier to hide from interfering with other chains due to their complex internal structure, while sparse graphs have much less such connections. 
 Theorem \ref{thm:random} about the random graph $H=G(k,p)$  already indicates the limitations of this intuition, showing that a super-logarithmic average degree is enough for a typical infection rule $H$ to have a slow running time.   
Our next  results go further,  identifying two  very sparse explicit families with almost quadratic running time.

\subsubsection{The square of the Hamilton cycle}

The first result is about the $4$-regular family of the {\em squares of Hamilton cycles}. To construct the square of the Hamilton cycle  one orders the $k$ vertices cyclically and connects any two of them whose distance is at most two.

    \begin{theorem}[\cite{FMSz3}] \label{thm: square ham cycle}
    If $H$ with $k\geq 5$ vertices contains the square of a Hamilton cycle, then \[M_H(n)\geq n^{2-O(1/\sqrt{\log n})}=n^{2-o(1)}.\]
    \end{theorem}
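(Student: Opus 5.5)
The plan is to generalise the $K_5$ dilation-chain argument of Section~\ref{sec:K5} to any $k$-vertex graph $H$ containing the square of a Hamilton cycle $C_k^2$. Fix a Hamilton ordering $u_1,\dots,u_k$ of $V(H)$ such that $u_iu_{i+1}$ and $u_iu_{i+2}$ (indices mod $k$) are edges. On $W=\{w_x: x\in\Z_p\setminus\{0\}\}$ we build the basic simple $H$-chain $(H^1_i,f^1_i)_{i\in[\tau_0]}$. Consecutive copies occupy consecutive blocks of $k$ integers, overlapping in two vertices, and we embed $H$ so that the vertex of $H$ placed on $w_{x}$ is $u_{x-x_0}$ for the block start $x_0$. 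Every edge of the chain then joins $w_x,w_y$ with $|x-y|\le k-1$.

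The crucial choice is the overlap edge $f_i$ and the orientation. We want the two shared vertices to be adjacent in $C_k^2$, so that the chain links along the cyclic order. For the $a$-dilation we map $w_x\mapsto w_{ax}$. As before, Observation~\ref{obs:edges dil} gives that every edge of $G^j$ satisfies $x-y=\alpha a_j$ with $0<|\alpha|<k$. We then link the dilation chains by constant-length simple linking chains, exactly as in the $K_5$ proof. These are simple by the same Behrend-type argument, with coefficient bound $k$ in place of $4$.

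For the set of dilations we invoke the Behrend/Ruzsa variant of Theorem~\ref{lem:beh stre}, with coefficients bounded by $k$. It gives $A$ of size $p^{1-O(1/\sqrt{\log p})}$ with no non-trivial solutions to $\sum\alpha_ia_i=0$ in three variables. Edge-disjointness of distinct $G^j$ (Corollary~\ref{cor:disj}) and simplicity of linking chains follow verbatim. Lemma~\ref{lem:chain lower} is then applied with $K_k^-$ replaced by all graphs $H-e$. One checks that its proof uses \ref{(dag)} and the analogue of \ref{(*)} only for copies of $H-e$.

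The main obstacle is the analogue of \ref{(*)}: every copy $F$ of $H-e$ in $G$ must lie inside a single $H_i$. There are two sub-steps.

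First, consider $F$ inside a single dilation chain, i.e.\ a simple $H$-chain. If $H$ were $(2,1)$-inseparable, Observation~\ref{obs:insep simple} would settle this, but $C_k^2$ itself is only $4$-connected, and $H$ may be just $C_k^2$. We need a direct argument instead. Deleting a shared edge's two vertices plus one further edge from $C_k^2$ cannot disconnect it when $k\ge5$, since $C_k^2$ minus two vertices contains a Hamilton path of the remaining vertices plus chords. So $F$, which contains a spanning copy of $C_k^2$ minus an edge, cannot straddle a $2$-vertex cut of the chain. This may need care for small $k$.

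Second, consider $F$ using edges of several dilation chains, with colours as in Lemma~\ref{lem:K_5^-}. Here we need a replacement for Fact~\ref{obs: K5 col}: any non-monochromatic edge-colouring of $C_k^2$ minus an edge contains a non-monochromatic triangle. The triangles $u_iu_{i+1}u_{i+2}$ in $C_k^2$ form a cyclic sequence in which consecutive triangles share an edge. After deleting one edge, at most two triangles are destroyed, and the remaining triangles still cover all edges and are linked through shared edges into a connected structure (I expect this to need checking, possibly via the edges $u_iu_{i+2}$ linking around the gap). Colours therefore propagate: if all triangles were monochromatic, the whole of $F$ would be. A non-monochromatic triangle then yields a three-variable equation $\alpha_1a_{j_1}+\alpha_2a_{j_2}+\alpha_3a_{j_3}=0$ with $0<|\alpha_i|<k$, which is forbidden.

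Finally, we handle copies of $H-e$ touching linking-chain vertices by the same common-neighbourhood/cut argument as in Section~\ref{sec:K5}. Taking $p\approx n/4$ then gives running time $\tau\ge \tau_0 q\ge n^{2-O(1/\sqrt{\log n})}$.
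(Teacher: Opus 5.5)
Your replacement for Fact~\ref{obs: K5 col} is false, and this is where the proof breaks. For $k\ge 7$ the only triangles of $C_k^2$ are the consecutive triples $u_ju_{j+1}u_{j+2}$. Deleting a short edge $e=u_iu_{i+1}$ therefore kills the two triangles through it and leaves the long edges $u_{i-1}u_{i+1}$ and $u_iu_{i+2}$ in no triangle at all. Colouring $u_{i-1}u_{i+1}$ red and every other edge blue gives a non-monochromatic colouring in which every triangle is monochromatic.

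There is a second problem: $F$ is a copy of $H-e$, not of $C_k^2-e$. An edge of $H$ outside the spanning $C_k^2$ (e.g.\ a chord $u_0u_5$ when $k\ge 10$) can lie in no triangle, and the colouring can be monochromatic on the $C_k^2$-part, a case your argument never addresses. So triangles, and three-variable equations with coefficients below $k$, do not suffice.

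What is needed is exactly the Behrendian property of Definition~\ref{def:behre}: a non-monochromatic cycle that is a union of two or three monochromatic paths. In your two examples these are the $4$-cycle $u_{i-1}u_{i+1}u_{i+2}u_i$, and the extra edge $xy$ closed up by an $x$--$y$ path in the monochromatic spanning part. Summing the differences of Observation~\ref{obs:edges dil} along each monochromatic path gives $\beta_1a_{j_1}+\beta_2a_{j_2}+\beta_3a_{j_3}=0$. Each $\beta_r\neq 0$, since each path joins distinct vertices, but $|\beta_r|$ can be as large as $(k-1)^2$. So the Behrend-type set must avoid solutions with coefficients up to $(k-1)^2$, not $k$. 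This is the paper's route: the theorem is deduced from Theorem~\ref{thm:almostquadratic} by checking that $H$ is $(2,1)$-inseparable and that $H-e$ is Behrendian for every $e\in E(H)$; the latter is the non-trivial step.

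Three further steps also need repair.

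\begin{enumerate}
\item Lemma~\ref{lem:chain lower} does not transfer verbatim. Its step ``$F\subseteq H_i$, hence $f=e_i$'' needs $H_i-e_i$ to have no completion to a copy of $H$ other than by $e_i$, and this fails for non-self-stable $H$. For $H=K_6^-\supseteq C_6^2$, the non-edge $g_i$ of $H_i$ also completes $H_i-e_i$ to a copy of $K_6^-$. So $g_i\notin G$ is infected at time $i$, and later copies of $H-e$ may use it, outside the reach of your \ref{(*)}. You can fix this within your construction: prove inductively that $G_t\subseteq G'\cup\bigcup_{i\le t}\binom{V(H_i)}{2}$, and establish \ref{(*)} for copies of $H-e$ in $\bigcup_i\binom{V(H_i)}{2}$. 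Your arguments allow this, because Observation~\ref{obs:edges dil} only uses that both endpoints lie in one block, and blocks of distinct dilations share at most one vertex.
\item The local step is easier than you think. A graph containing a spanning $C_k^2$ with $k\ge 5$ is $4$-connected, hence $(2,1)$-inseparable by the remark after Definition~\ref{def:inseparable}, so Observation~\ref{obs:insep simple} applies directly.
\item The $K_5$ common-neighbourhood argument for linking vertices does not carry over, since two vertices of $C_k^2-e$ need not have a common neighbour. Use linking chains of length greater than $k$. A $k$-vertex $F$ then misses one of the pairwise disjoint edges $g_r$, after which some $2$-vertex set separates it, contradicting $(2,1)$-inseparability.
\end{enumerate}
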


Note that Theorem \ref{thm: square ham cycle} also generalises the result on $K_5$ of Balogh et al.~\cite{balogh2019maximum} as the square of a cycle of length 5 is $K_5$. The fact that the result only requires that the infection rule {\em contains} something is noteworthy. As we will see in Section~\ref{sec:graph parameters} the  function $M_H(n)$ is very far from being monotone in the infection rule $H$.

\subsubsection{The wheel} \label{sec:wheel}
Our second result establishes an almost quadratic lower bound for the maximum running time of a sequence of
 graphs with average degree strictly less than four.
   The \textit{wheel graph} $W_k$ is defined to be the graph with $k+1$ vertices   obtained by taking a cycle of length $k$ and adding a new vertex that is adjacent to all vertices of  the cycle.
The striking feature of this example is that we can also couple the lower bound with a non-trivial upper bound. In particular this introduces a completely new behaviour of possible maximum running times, which is sub-quadratic but larger than any polynomial of degree smaller than $2$. This can be seen as evidence for the validity of Conjecture \ref{conj:K5} which predicts a similar behaviour for $K_5$.

    \begin{theorem}[\cite{FMSz3}]\label{thm:wheel}
    Let $k\geq 7$ be an odd integer.
    The wheel graph $W_k$ satisfies \[M_{W_k}(n) \geq n^{2-o(1)} \quad \mbox{ and } \quad  M_{W_k}(n) = o(n^2).\]
    \end{theorem}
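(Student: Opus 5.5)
The proof has two independent halves.

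\textbf{Lower bound.} I will follow the dilation chain construction of Section~\ref{sec:K5}, replacing $K_5$ by $W_k$. Label a vertex set $W$ by $\Z_p\setminus\{0\}$. Build a basic simple $W_k$-chain whose copies are laid out along the additive order, so that each copy $H_i$ uses a bounded window of consecutive indices. For instance, hubs and rims could be arranged so that each rim vertex is at bounded index distance from its rim neighbours. Then take the dilations of this chain by a Behrend-type set $A$, as in Theorem~\ref{lem:beh stre}, with the coefficient bound $4$ replaced by some constant $c_k$. Observation~\ref{obs:edges dil} then says that every edge of the $j^{\text{th}}$ dilation chain has index difference $\alpha a_j$ with $0<|\alpha|\le c_k$. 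The dilation chains are glued with linking chains of constant length, exactly as before. This yields length $\Omega(p|A|)=n^{2-o(1)}$.

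It remains to verify \ref{(dag')} and \ref{(*)} of Lemma~\ref{lem:chain lower} for $W_k$ in place of $K_k$, i.e.\ that every copy of $W_k$ minus an edge lies in a single $H_i$. Here $W_k$ is $(2,1)$-separable: deleting the hub and a rim edge leaves a path, and deleting the hub and two rim vertices disconnects it. So Observation~\ref{obs:insep simple} is not available, and this is the main obstacle for the lower bound. I would first show that a copy of $W_k^-$ cannot use edges from two dilation chains. The wheel minus an edge still has many triangles through the hub. Every rim edge lies in a hub triangle, so a non-monochromatic copy would either contain a non-monochromatic triangle, which is excluded by the three-variable equations, or be monochromatic on each hub triangle with a colour change. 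In the latter case the colour changes can be pushed around the rim cycle to produce a bounded-coefficient linear relation. That relation has more than three variables, so I would strengthen $A$ to avoid solutions of all equations $\sum\alpha_i a_i=0$ with at most $k$ terms. Behrend's construction still gives $|A|\ge p^{1-o(1)}$ for this. Within a single simple chain, oddness of $k$ and $k\ge7$ should prevent the rim from wrapping through two consecutive copies: an odd rim cannot be split along the $2$-vertex intersection $e_i$ into two parts each closing up through hub triangles. A careful case analysis will be needed here.

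\textbf{Upper bound $o(n^2)$.} Suppose the $W_k$-process on $G$ runs for $\tau\ge\varepsilon n^2$ steps. Following the remark that chains are also used for upper bounds, I would extract a chain of witnesses by tracing back from an edge added at time $\tau$. Its edges $e_t$ are added at distinct times, and $e_t$ lies in a copy of $W_k$ together with $e_{t-1}$. This gives $\varepsilon n^2$ distinct edges, and each consecutive pair spans a wheel. Since $W_k$ is sparse and hub-centred, each step is witnessed by a hub vertex adjacent to both endpoints. Running the triangle removal lemma on the resulting dense family of triangles (hub, $e_{t-1}$, $e_t$) should give a contradiction, in the spirit of Roth via Ruzsa--Szemer\'edi. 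The idea is that there are $\varepsilon n^2$ edge-disjoint-ish ``pinned'' triangles, while few other triangles can exist without causing early infections. The key step, and the part I am least sure of, is showing that these triangles are essentially unique per edge. An extra triangle would close an odd rim earlier and infect an edge prematurely, and I would use the odd-$k$ rim to argue this. Removal then forces $o(n^2)$ such triangles.
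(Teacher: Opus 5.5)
\textbf{Lower bound.} The step that fails is the enlargement of the dilation set. No $A\subseteq\Z_p$ with $|A|>\sqrt{2p}$ avoids non-trivial solutions of $a_1+a_2=a_3+a_4$, because two distinct pairs must then have the same sum. So there is no set of size $p^{1-o(1)}$ avoiding all bounded-coefficient equations with up to $k$ terms. Behrend-type sets handle three-variable equations, as in Theorem~\ref{lem:beh stre}, but not arbitrary longer ones.

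What you are missing is that more than three variables never arise for $W_k-e$. This is the Behrendian property (Definition~\ref{def:behre}) that the paper uses for the wheel. Suppose all hub triangles of a copy of $W_k-e$ are monochromatic but the copy is not. Then $e$ cannot be a rim edge, since for a rim edge the hub triangles cover every edge and are linked through shared spokes. Hence $e=hr_1$, and only $r_kr_1,r_1r_2$ can carry other colours. The $4$-cycle $r_khr_2r_1$ is then a non-monochromatic union of at most three monochromatic paths. This gives $\beta_1a+\beta_2a'+\beta_3a''=0$ with coefficients bounded by $2c_k$ and non-zero because the path endpoints are distinct. Theorem~\ref{lem:beh stre} with a larger coefficient bound excludes this.

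The local case analysis you defer also comes from choosing the simple chain explicitly, not from oddness. For instance, let every $e_i$ be a rim edge of both $H_i$ and $H_{i+1}$. Then every non-hub vertex has degree at most $5$ in the chain, while the hub of $W_k-e$ has degree at least $k-1\ge 6$. So the hub must land on some $h_j$, whose neighbourhood is the rim of $H_j$. Comparing common neighbours then forces the whole copy into $H_j$.

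\textbf{Upper bound.} This part is an outline, not a proof. The paper indicates the same overall route: embed a chain $(H_t,e_t)_{t\in[\tau]}$ into the process, then apply the Ruzsa--Szemer\'edi $(6,3)$-theorem. All the content lies in one lemma: the triples extracted from the chain, possibly after discarding a constant fraction, contain no three triples on six vertices. You give no argument for this.

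Your stated substitutes do not hold as written.
\begin{itemize}
\item ``(hub, $e_{t-1}$, $e_t$)'' is not a triangle.
\item Triangles $\{h_t\}\cup e_t$ with a common hub share spokes, so they need not be edge-disjoint. A vertex adjacent to many others acts as hub for a whole $C_k$-type process inside its neighbourhood.
\item ``Few other triangles can exist'' is false for the host graph. Adding a disjoint clique on $n/2$ vertices creates $\Theta(n^3)$ triangles without changing the running time, since copies of $W_k-e$ are connected.
\end{itemize}
So the removal argument must be run on an auxiliary $3$-graph built from the chain. You still have to prove, from the timing of the $e_t$ and presumably using that $k$ is odd, that this $3$-graph is $(6,3)$-free. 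The remark about an extra triangle ``closing an odd rim earlier'' is not such an argument, so $M_{W_k}(n)=o(n^2)$ remains unproved.
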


In general, upper bounds on $M_H(n)$ are hard to come by and require proving that the process stabilises in a bounded number of rounds \textit{no matter which} $n$-vertex starting graph $G_0$ is given.  One key idea in our  proof  is  that chains $(H_i,e_i)_{i\in [\tau]}$, which are  incredibly useful for lower bounds (see Section \ref{sec:chainsinsection5} below), can also be used for upper bounds! Indeed, when proving upper bounds, we have very little information about  the graphs $G_i$ in the process whose running time we are trying to bound.
 However in any $H$-process $(G_i)_{i\geq 0}$ with running time $\tau:=\tau_H(G_0)$ we can embed an $H$-chain $(H_i,e_i)_{i\in [\tau]}$. Indeed,  starting with some edge $e_\tau$ added at time $\tau$ there is  a copy $H_\tau$ of $H$ completed by $e_\tau$. The edge $e_{\tau-1}$ is then chosen as an edge of $H_\tau$ that is added at time $\tau-1$, which must exist as otherwise $e_\tau$ would be added before time $\tau$. Defining $H_{\tau-1}$ to be a copy of $H$ completed by $e_{\tau-1}$ and continuing this process, we can define the full chain $(H_i,e_i)_{i\in [\tau]}$. 
 Although the starting graph of this chain may not be $G_0$, or even contained in $G_0$, the chain still gives us some concrete structure in which to work on for upper bounds.

 Another interesting feature of our  proof of the upper bound in Theorem \ref{thm:wheel} is that we use the famous \textit{Ruzsa-Szemer\'edi $(6,3)$-problem/theorem} \cite{ruzsa1978triple} which states that  any $n$-vertex $3$-uniform hypergraph with no $6$ vertices inducing 3 or more edges, must have $o(n^2)$ edges.

\subsection{Proof technique: Chain constructions}\label{sec:chainsinsection5}

All of  the lower bounds stated above  as well as several further results later in the survey follow from various chain constructions. Part of the challenge for applying chain constructions to infection rules $H$ that are not complete is to find the ``correct'' way to  generalise definitions and properties. In \cite{FMSz3}, we axiomatised the procedure, giving weak conditions on $H$-chains $(H_i,e_i)_{i\in [\tau]}$ that allow us to control the running time of the process based on a chain. The resulting notion of \textit{proper} chains is somewhat technical. Suffice it to say here that we derive an analogue to Lemma \ref{lem:chain lower} which gives lower bounds on running times for proper chains, with the key property being analogous to the condition \ref{(*)} asking to avoid ``unwanted'' copies $F$ of $H$ minus an edge, that span several $H_i$.   We also placed the linking chains that we saw for $K_5$ into this general framework, which allowed us to simply exhibit some collection $\cH$ of simple $H$-chains which satisfies the notion of properness. In particular, we need to avoid ``local'' unwanted copies $F$ of $H$ minus an edge, that is, those that are induced by one segment chain $(H^a_i,e^a_i)_{i\in[\tau_0]}$ in the collection $\cH$ but do not lie in a single $V(H_i)$, and ``crossing'' copies $F$ of $H$ minus an edge, that span the edges of several different chains in $\cH$.

Avoiding local copies $F$ of $H-e$ for some edge $e\in H$ essentially boils down to 
showing that we cannot place such a copy $F$ on the edges of a simple $H$-chain $(H_i,e_i)_{i\in[\tau_0]}$ without placing it on $V(H_i)$ for some $i\in [\tau_0]$. For $H=K_5$, we noticed this via Observation \ref{obs:simple K5}. For general infection rules $H$, there is no guarantee that  we cannot separate a copy $F$ of  $H-e$ across a vertex cut of size $2$ (given by some edge $e_i$ of the chain). This  motivates  the definition of $(2,1)$-inseparability from Section \ref{sec:H perc}, which provides a simple condition on $H$ that suffices to avoid the local unwanted copies of $H-e$.

 As with $K_5$, in many cases we can forbid the presence of crossing unwanted copies $F$ of $H-e$ by placing \textit{dilation chains} $\cH$  according to an appropriate dilation set $A$ which avoids solutions to certain linear equations and placing one simple $H$-chain corresponding to each dilation. Through Fact \ref{obs: K5 col}, we showed that an unwanted copy of $K_5^-$  leads to a triangle spanning more than one chain. In some of our applications in this section, for example for the wheel graph $W_k$, we cannot  guarantee this. We can however work with a slightly weaker concept.

    \begin{Def} \label{def:behre}
    A graph $F$ is called {\em{Behrendian}} if for any non-monochromatic colouring of the edges in $F$ there is a non-monochromatic cycle $C\subseteq F$ which is the union of two or three monochromatic paths.  
    \end{Def}

    The non-monochromatic cycles from Definition \ref{def:behre} can still be avoided by taking a dilation set $A$ which is of Behrend type, as in Theorem \ref{lem:beh stre}, and choosing the constant bounding the size of the coefficients of the equations to be sufficiently large in terms of $k=v(H)$. We therefore arrive at the following theorem.

     \begin{theorem}[\cite{FMSz3}] \label{thm:almostquadratic} 
    Let $H$ be a $(2,1)$-inseparable graph such that $H-e$ is Behrendian for every $e\in H$. Then  $M_H(n) \geq n^{2-o(1)}$. 
    \end{theorem}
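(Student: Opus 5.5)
The plan is to generalise the dilation-chain proof of the $K_5$ lower bound from Section~\ref{sec:K5} directly, replacing $K_5$ by $H$ with $k:=v(H)$. Fix a prime $p$ with $n/C\le p\le 2n/C$ for a suitable constant $C=C(k)$ and label $W=\{w_x:x\in\Z_p\setminus\{0\}\}$. Order $V(H)$ as $u_1,\dots,u_k$ with a designated edge $e_0=u_{k-1}u_k$ and another designated edge $e'=u_1u_2$ disjoint from it; such a pair exists since $H$ is $3$-connected. Then lay out a basic simple $H$-chain $(H^1_i,f^1_i)_{i\in[\tau_0]}$ along consecutive blocks of $k-2$ residues, with $\tau_0=\Theta(p/k)$. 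Each copy occupies an interval of $k$ consecutive residues, and consecutive copies share exactly the two endpoints of $f^1_i$. For $a\in A$ we set $H^a_i$ to be the image under $x\mapsto ax$. The analogue of Observation~\ref{obs:edges dil} then says that every edge $w_xw_y$ of $G^a:=\cup_i H^a_i$ satisfies $x-y=\alpha a$ modulo $p$ for some $0<|\alpha|\le k$. We take $A$ to be a Behrend-type set as in Theorem~\ref{lem:beh stre}, in its strengthened form (cf.\ \cite{ruzsa_solving_1993,shapira2006behrend}): coefficients are bounded by some $L=L(k)$ (say $L=k^2$), and $A$ has no non-trivial solutions to any equation $\sum_{i=1}^3\alpha_i a_i=0$ with $|\alpha_i|\le L$. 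We then link the dilation chains with constant-length simple $H$-chains on fresh vertices exactly as before, giving length $\tau\ge \tau_0|A|=n^{2-o(1)}$ on $O(n)$ vertices.

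To apply the chain lemma we need the $H$-analogues of \ref{(dag)} and \ref{(*)}. The lower-bound Lemma~\ref{lem:chain lower} was proved for cliques, so I would re-run its inductive proof for general $H$. The induction shows that $G_i=G_0+\{e_1,\dots,e_i\}$, provided every copy $F$ of $H-e$ (for any $e\in E(H)$) in $G=\cup H_i$ lies within some $V(H_j)$. It also needs that, inside $V(H_j)$, only $e_j$ can be the completing edge. This holds because $G[V(H_j)]\cap G_{i-1}$ misses at least two edges of $H_j$ when $j>i$, and because the chains are simple and pairwise edge-disjoint. Edge-disjointness of distinct $G^a$ (Corollary~\ref{cor:disj}) and simplicity of the linking chains (Corollary~\ref{obs: linking simple}) follow verbatim from two-variable equations $\alpha a-\alpha' a'=0$.

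For \ref{(*)} I split into three types of copies $F$ of $H-e$. \emph{Copies touching linking vertices}: here I argue as in the $K_5$ case, using $(2,1)$-inseparability instead of counting common neighbours. A copy meeting $U_j\setminus W$ and leaving $U_j$ is separated by the two border edges' vertices, and this is ruled out by $3$-connectivity, made finer by choosing the border edges far apart. \emph{Local copies}, inside one $G^a$: these are handled by Observation~\ref{obs:insep simple}. One point needs care here: $G^a$ must not contain extra edges from wrap-around, since the residues of a dilation chain are consecutive multiples of $a$, and the construction leaves a gap so the chain is genuinely simple. \emph{Crossing copies}, using edges of two or more $G^a$: colour each edge of $F$ by the dilation it comes from. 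Since $H-e$ is Behrendian, $F$ contains a non-monochromatic cycle $C$ that is a union of two or three monochromatic paths $P_1,P_2,P_3$. Summing $x-y$ along $P_i$ gives a telescoping sum equal to $\beta_i a_{j_i}$ with $|\beta_i|\le k\cdot k\le L$. Going around $C$ yields $\sum\beta_i a_{j_i}=0$ modulo $p$, with at most three variables.

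The main obstacle lies in the crossing case: the coefficients $\beta_i$ may vanish. A monochromatic path of length up to $k$ inside a simple chain may have endpoints with equal residues only if the path is closed; more dangerously, its displacement can be $0$ modulo $p$ only if the endpoints coincide, which does not happen on a proper path. So $\beta_i\ne0$ as an integer, provided $kL<p$. Given that, the Behrend property forces all colours appearing with nonzero coefficient to be equal. With two or three paths of which at least two colours differ, this is a contradiction. If some $\beta_i$ were $0$ (impossible as argued) or two paths shared a colour, I would merge them and reduce to the two-colour equation. I expect the bookkeeping that ensures the endpoints of distinct monochromatic paths are distinct vertices, so that all the $\beta_i$ are genuinely nonzero, to be the delicate point.
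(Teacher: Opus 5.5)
\emph{Gap: your generalisation of Lemma~\ref{lem:chain lower} fails when $H$ is not self-stable.}

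Your overall approach is the paper's: dilation chains on $\Z_p\setminus\{0\}$, a Behrend-type dilation set with coefficient bound depending on $k$, linking chains, $(2,1)$-inseparability for local copies, and the Behrendian property for crossing copies. The point you flag as delicate is not delicate. Each arc of $C$ has distinct endpoints, and block positions telescope as integers, so every $\beta_i\neq 0$. Theorem~\ref{lem:beh stre} already allows repeated values among the $a_i$, so no merging is needed.

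The gap is in re-running Lemma~\ref{lem:chain lower} for general $H$. You assert $G_i=G_0+\{e_1,\dots,e_i\}$ and that inside $V(H_j)$ only $e_j$ can be a completing edge, but your edge count justifies this only for $j>i$. For $j\le i$ the claim is false whenever $H$ is not self-stable, i.e.\ whenever $H-g+f\cong H$ for some edge $g$ and non-edge $f$ of $H$. Such $H$ are within the scope of the theorem. Take $H=K_k-\{ab,bc\}$ with $k\ge 8$; then $\delta(H)=k-3\ge k/2+1$, so the second part of Theorem~\ref{thm;dense quad}, which the paper derives from this statement, applies. For $d\notin\{a,b,c\}$, $H-cd$ is $K_k$ minus the path $abcd$, and the automorphism reversing this path gives $H-cd+ab\cong H$. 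So once $H_j$ is fully infected, the non-edge $ab$ of $H_j$, which lies in no $H_i$, is infected by round $j+1$. From then on $G_t\not\subseteq G$. Property \ref{(*)}, which you check only for copies of $H-e$ inside $G=\bigcup_i H_i$, then no longer controls the process, and your induction breaks.

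\emph{How to repair it within your framework.} Put $\hat G:=\bigcup_j\binom{V(H_j)}{2}$ and prove two things: every copy of $H-e$ in $\hat G$ has vertex set inside a single $V(H_j)$, and distinct blocks share no pair except the chain edges. Your arguments transfer:
\begin{enumerate}
\item Colour each pair inside $V(H^a_i)$ by $a$. This is well defined, since such a pair has difference $\alpha a$ with $0<|\alpha|\le k-1$, so the Behrend property forbids a pair from lying in blocks of two different dilations. The Behrendian computation then runs verbatim.
\item Local copies lie in a sequence of $k$-cliques glued along pairwise disjoint edges, so $3$-connectivity of $H-e$ confines each one to a single clique.
\end{enumerate}
Then induct on a weaker invariant: $G_t\subseteq\hat G$, $G_t\cap\binom{V(H_{t+1})}{2}=E(H_{t+1})\setminus\{e_{t+1}\}$, and $G_t\cap\binom{V(H_j)}{2}=E(H_j)\setminus\{e_{j-1},e_j\}$ for $j\ge t+2$. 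Extra edges appear only inside blocks with $j\le t$ and cannot leak forward, so $e_{t+1}$ still first appears in round $t+1$.

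\emph{A smaller point.} For copies meeting linking vertices, $3$-connectivity alone does not exclude the $4$-vertex separator $f^j_{\tau_0}\cup f^{j+1}_0$. You need linking chains of length greater than $k$, not $3$ as for $K_5$. Then each component of fresh vertices of a $k$-vertex copy attaches to only one border edge, and that border edge is a $2$-vertex separator, contradicting $3$-connectivity.
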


Theorem~\ref{thm: square ham cycle} and the second part of Theorem~\ref{thm: min deg almost quad} are  corollaries although checking both the $(2,1)$-inseparability and  Behrendian property is not immediate.  The wheel does not follow from Theorem \ref{thm:almostquadratic} as it is $(2,1)$-separable. Indeed, it has vertices of degree three that can be disconnected from the rest of the graph by removing an incident edge and the two other adjacent vertices. Nonetheless, we can apply our machinery by explicitly building simple $W_k$-chains 
 that avoid local unwanted copies of the wheel minus an edge.

 For the quadratic lower  bounds given in this section, we use \textit{ladder chains} similar to those used for $K_6$, see Figure \ref{fig:ladderchain}. We split the vertex set $V(H)$ into $L\cup R$, place a sequence of copies of  $H[L]$ on the left with consecutive copies intersecting in a single vertex and similarly on the right. Each simple $H$-chain $(H^a_i,e^a_i)_{i\in [\tau]}$ in our collection will then be placed with $H_i$ choosing a copy of  $H[L]$ from the left and a copy of $H[R]$ according to some \textit{slope} $a$. For $K_6$, both $H[L]$ and $H[R]$ were simply copies of $K_3$ and this greatly simplifies the analysis of this construction. Indeed, it is not hard to show that any unwanted copy of $K_6^-$ will have to use one of the triangles on the left and one on the right. To do this for our applications of dense graphs and random graphs is considerably more intricate. Indeed, for the dense graphs we have very little information about the structure of $H$ and  the random graphs $H=G(k,p)$ that we treat can be very sparse. 
 
 For the  first part of Theorem~\ref{thm: min deg almost quad}, as with Theorem \ref{thm:almostquadratic} above, we actually derive a more general theorem. We say that a graph $H$ is {\em $(\ell,1)$-inseparable} if it cannot be disconnected by the removal of an edge and at most $\ell$ vertices.

    \begin{theorem}[\cite{FMSz3}] \label{thm:quadratic}
    If $H$ is  $(\lceil v(H)/2\rceil, 1)$-inseparable, then $M_H(n)=\Omega (n^2)$. 
    \end{theorem}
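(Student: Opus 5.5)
We will prove Theorem~\ref{thm:quadratic} by a ladder chain construction, as sketched for $K_6$ in Section~\ref{subsec:cliques}, combined with the linking chains of Section~\ref{sec:K5} and the general lower bound for chains, Lemma~\ref{lem:chain lower} with \ref{(dag)} weakened to \ref{(dag')}. Let $k=v(H)$. Since $(\lceil k/2\rceil,1)$-inseparability implies $(2,1)$-inseparability for $k\ge 5$, Observation~\ref{obs:insep simple} handles all local unwanted copies of $H$ minus an edge in each simple segment. It therefore remains to build linearly many simple $H$-chains, each of linear length, on a common vertex set so that no crossing copy of $H-e$ appears. After that, linking chains concatenate them into a chain of length $\Omega(n^2)$.

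\textbf{Construction.} Fix a partition $V(H)=L\cup R$ with $|L|=\lfloor k/2\rfloor$ and $|R|=\lceil k/2\rceil$. Choose it, after relabelling, so that one edge of $H[L]$ shares a vertex with another copy's edge, so that consecutive copies of $H[L]$ can overlap in a single vertex; do the same for $R$. On the left side place a sequence of $2\tau_0$ copies of $H[L]$, consecutive ones sharing one vertex, and likewise on the right side with copies of $H[R]$, where $\tau_0=\Theta(n/k)$. For a slope $a$ from a set $A$ of $\Theta(n/k)$ slopes spaced by a large constant $C(k)$, let $H^a_i$ be the union of the $i$-th left copy, the $(i+a)$-th right copy, and the bipartite graph $H[L,R]$ between them. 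The edge $e^a_i$ is chosen as a cross edge between these copies, oriented so that the next copy $H^a_{i+1}$ also contains it. This makes each $(H^a_i,e^a_i)_i$ a simple chain, possibly after adjusting which vertex is shared.

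\textbf{Key steps.}
\begin{enumerate}[(a)]
\item Verify simplicity of each segment and \ref{(dag')}. This is immediate, because different slopes give disjoint sets of cross edges, and the left and right edges are never chain edges $e_i$.
\item Show that any copy $F$ of $H-e$ in the union must be ``aligned''. Here we use the inseparability hypothesis. Removing the cut vertex shared by two consecutive side-copies together with at most $\lceil k/2\rceil$ vertices and one edge cannot disconnect $F$. This should force $F$ to intersect the left side essentially within one left copy and the right side within one right copy.
\item Exclude crossing copies. If $F$ lies over one left copy $i$ and right copies near $i+a$ and $i+a'$ with $a\ne a'$, the spacing of $A$ means these right copies are far apart along the right path. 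They are then separated by a single shared vertex, so $F\cap R$ splits, and $F$ could be disconnected by removing the left part, at most $\lfloor k/2\rfloor$ vertices, plus at most one edge. This contradicts inseparability.
\item Link the segments with constant-length linking chains on fresh vertices, exactly as for $K_5$. Then apply the chain lemma to obtain $\tau\ge |A|\tau_0=\Omega(n^2)$.
\end{enumerate}

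\textbf{Main obstacle.} The hard step is (b)/(c): a counting argument showing that an unwanted copy of $H-e$ cannot distribute its $k$ vertices across several side-copies. The difficulty is that nothing is assumed about the internal structure of $H[L]$ and $H[R]$. I would first try a vertex-count argument. Every left/right cut in the ladder consists of one cut vertex on each side plus cross edges, and each cross edge comes from a single slope. Choose the cut so that the smaller side of $F$ has at most $\lceil k/2\rceil$ vertices. Deleting those vertices together with one remaining cross edge then separates $F$, which contradicts $(\lceil k/2\rceil,1)$-inseparability. The half-size threshold in the hypothesis is exactly what makes this bookkeeping work.
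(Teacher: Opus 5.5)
\textbf{There is a genuine gap.} Your architecture is the one the paper uses: ladder chains built from copies of $H[L]$ and $H[R]$ glued at single vertices, one simple chain per slope, linking chains, and the chain lemma with \ref{(dag')}. But the step you yourself flag as the main obstacle is not proved, and the heuristics you give for it do not work.

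\begin{itemize}
\item The ``$1$'' in $(\lceil k/2\rceil,1)$-inseparability is already used up by the missing edge. For a copy $F$ of $H-e$, all you know is that $F-S$ is connected whenever $|S|\le\lceil k/2\rceil$. So you may not delete a shared vertex together with $\lceil k/2\rceil$ further vertices and one more edge, as in (b). Nor may you delete ``one remaining cross edge'', as in your last paragraph.
\item Once all slopes are superimposed, every left/right cut of the ladder is crossed by cross edges of many slopes. So there is no cut carrying a single cross edge of $F$.
\item Step (c) only treats the case where $F$ meets one left copy. The general case is never handled: the left part $F_L$ and right part $F_R$ of $V(F)$ are both spread over several copies and several slopes occur. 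The conclusion of (b) is asserted, not derived.
\end{itemize}

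\textbf{The missing argument is a short count.} Let $c=\lceil k/2\rceil$; every side-copy has at most $c$ vertices.
\begin{itemize}
\item One of $F_L,F_R$, say $F_R$, has at most $k/2\le c$ vertices. Hence $F[F_L]=F-F_R$ is connected.
\item Suppose $|F_L|>c$. Then $F_L$ does not fit into one left copy, so some shared vertex $\ell$ of the left sequence separates $F_L\setminus\{\ell\}$ inside the left side. Deleting $F_R$ and $\ell$ then disconnects $F$. Yet $|F_R|+1\le k-c\le c$, a contradiction.
\item Hence $|F_L|,|F_R|\le c$, both parts are nonempty, and both $F[F_L]$ and $F[F_R]$ are connected.
\item A connected set of $m$ vertices in a sequence of copies glued at single vertices meets at most $m+1$ consecutive copies. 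So any two slopes occurring on cross edges of $F$ differ by at most $2c$.
\item Choose the slopes in $A$ more than $2c$ apart. Then only one slope $a$ occurs.
\item A vertex of $F$ in a side-copy outside the range of the slope-$a$ chain would be cut off by a single shared vertex. So $F$ lies in the underlying graph of the simple slope-$a$ chain.
\item Observation~\ref{obs:insep simple} then places $F$ inside one $H^a_i$.
\end{itemize}
This is exactly where the threshold $\lceil k/2\rceil$ enters, through vertex deletions only. It works for any partition with $|L|,|R|\le\lceil k/2\rceil$.

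\textbf{Make the gluing explicit.} $(\lceil k/2\rceil,1)$-inseparability forces $\delta(H)\ge\lceil k/2\rceil+2$, so $H$ has two disjoint $L$--$R$ edges $xy$ and $x'y'$. Identify $x$ in each left copy with $x'$ in the next, and likewise $y$ with $y'$ on the right. Let $e^a_i$ be the edge joining the two shared vertices.
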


Further natural examples of $k$-vertex $(\lceil k/2\rceil, 1)$-inseparable graphs are complete tripartite graphs with no part larger than $\floor{k/2}-2$ and $t^{\text{th}}$ powers of cycles for $t\geq k/4+1$, which are obtained by cyclically ordering the vertices and connecting each vertex to all vertices within distance $t$ in the ordering.

\section{Sub-linear running times} \label{sec:sublin}
After seeing infection rules with long running times, we now jump to the other end of the spectrum and explore infection rules for which the process stabilises fast. 

\subsection{Trees} \label{sec:intro-trees}

We begin with an example. 
    \begin{example} For the path $P_t$ with $t\geq 2$ vertices, $M_{P_t}(n)\leq 3$.     \end{example}
 \begin{proof}   If $t=2$, every edge of $K_n$ is infected immediately, so $M_{P_2}(n)=M_{K_2}(n)=1$. 
    More generally, unless the starting graph is $P_t$-stable, the first round creates a copy $P$ of $P_t$. 
    In the second round, both leaves of $P$ become adjacent to every other vertex (including vertices of $P$). Moreover, as at this point every vertex in $V(G_0)$ is a leaf of a copy of $P_t$, in the third round all these vertices also become universal and the process terminates with $K_n$.  
\end{proof}

Infection rules $H$ with a pendant vertex seem to spread the virus very fast:
Any vertex in a copy of $H$ with a pendant neighbour in $H$ will become almost universal, that is, connected to every vertex outside the copy of $H$, in the next round.  
In particular in every tree $T$-process, after just the second round every leaf-neighbour within any copy of $T$ created in the first round is already almost universal. 

Does this mean that there is an absolute constant that bounds the maximum running time for every tree? This is not quite true;  
one can witness this with the tree having just a single leaf-neighbour: the star.

   \begin{example} \label{ex:star}
    Let $T=K_{1,t-1}$ be the $t$-vertex star. Then $M_T(n)=t-1$ for all $n$ sufficiently large. 
    \end{example}
    
    \begin{proof}
Let us first see that if a $T$-process goes on for $t-2$ rounds then it stabilises in the next round. Observe that the centre vertex $c_i$ of a copy of $K_{1,t-1}$ created in the $i^{\text{th}}$ round for some $i\leq t-2$, becomes universal at round $i$. This means that after round $t-2$ every vertex has at least $t-2$ neighbours. So all non-universal vertices are now centres of $K_{1,t-1}$ with one edge missing, and in the next round  all vertices become universal and the $T$-process stabilises by percolating. 

For the lower bound let us take $G_0$ to be the disjoint union of the stars $K_{1,s}$ with centre $c_i$, $1\leq s \leq t-2$ and a set $W$ of $n-\binom{t}2+1$ isolated vertices. We aim to show that the $T$-process $(G_i)_{i\geq 0}$ on $G$ runs for $t-1$ steps before stabilising, that is, at least one edge is infected  
in round $t-1$ (and not before). 
We can see by induction that for $1\leq i \leq t-3$, the graph $G_i$ after round $i$ is the union of the starting graph $G_0$ and all incident edges to the $i$ vertices $c_{t-i-1}, \ldots , c_{t-2}$. Therefore in $G_{t-3}$ the vertices of $W$ are still independent and have degree $t-3$. 
Vertex $c_1$ becomes universal in round $t-2$, raising the degree of the vertices in $W$ to $t-2$. Then in round $t-1$ all edges within $W$ will get infected. That means the process did not stabilise before round $t-1$, so  $\tau_H(G_0) \geq t-1$, as desired.
    \end{proof}

This example shows that, unlike for the case of paths, the maximum running time does depend on the size of the star. Nevertheless it does not depend on the order $n$ of the starting graph and this is what turns out to extend to any tree $T$. Our upper bound gives a quadratic dependence on the number of vertices of $T$.

    \begin{theorem}[\cite{FMSz2}] \label{thm:trees}
    Let $t\in\N$ and $n\geq 2t$. Every tree $T$ on $t$ vertices satisfies
        \[
        M_T(n) \leq \frac 1 8 \cdot (t^2 + 6t + 68).
        \]
    \end{theorem}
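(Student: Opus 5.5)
Our plan is to follow the mechanism already seen in the path and star examples: vertices playing the role of leaf-neighbours in a newly created copy of $T$ become almost universal one round later, and once a linear-size set is universal the process quickly closes up. Throughout, $(G_i)_{i\ge0}$ is a $T$-process on $n\ge 2t$ vertices that does not stabilise in the first round. Let $L\subseteq V(T)$ be the set of leaf-neighbours (support vertices) of $T$. The first step is the basic propagation lemma. Suppose a copy $T'$ of $T$ lies in $G_i$ and $\phi(v)=x$ for some $v\in L$ with leaf $\ell$. Then for every $y\notin V(T')$, re-mapping $\ell$ to $y$ gives a copy of $T$ minus the edge $xy$. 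Hence $x$ is adjacent in $G_{i+1}$ to every vertex outside $V(T')$, so $x$ is almost universal after one more round.

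The second step is a monotone potential. Let $U_i$ be the set of vertices of $G_i$ that are adjacent to all but at most $t$ vertices (\emph{almost universal}). We show that $U_i$ grows by at least one in every round until the process stabilises, and the aim is to bound how many rounds can pass before $|U_i|$ reaches roughly $t$.

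\begin{itemize}
\item If $|U_i|\ge t-1$, then any vertex $w$ with $\deg_{G_i}(w)$ large enough can be embedded as the image of a non-leaf vertex of $T$, with the rest of the tree placed greedily inside $U_i$. Consequently all remaining missing edges are added within $O(1)$ further rounds.
\item The hard regime is when $U_i$ is small. There a new copy of $T$ created at round $i$ forces a new almost universal vertex at round $i+1$, but such a copy only exists if some edge was added.
\end{itemize}

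We would argue as follows. If an edge $e$ is added at round $i+1$, then $e$ completes a copy of $T$. Embedding $T$ with a leaf-neighbour onto an endpoint of a freshly infected edge produces a new almost universal vertex, unless the copy of $T$ is already mostly supported on $U_i$. Comparing degrees, a vertex outside $U_i$ with $s$ neighbours in $U_i$ gains more once $|U_i|$ grows, in the same way as the star example, where degrees climbed by one per round until they reached $t-2$.

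The main obstacle, and the source of the quadratic dependence, is this degree-climbing phase. We expect to handle it by a double count: track, for each vertex $w\notin U_i$, its degree into $U_i$, and show that every round either increases $|U_i|$ or increases $\sum_w \min(\deg_{U_i}(w),t)$ in an organised way. Summing over the $\approx t/2$ possible values of $|U_i|$, with $\approx t/4$ rounds each on average, gives about $t^2/8$ rounds. The lower-order terms $6t+68$ are absorbed by the initial rounds and the final cleanup.

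We would first try the concrete invariant that after round $i$ some vertex outside $U_i$ has degree at least $i-|U_i|$. We would then use the tree structure, embedding $T$ by a breadth-first order rooted at a leaf-neighbour, to show that a vertex whose degree exceeds $t$ plus the number of leaves hanging from a fixed support vertex gets absorbed into $U$. Balancing these two counts should give the $\frac18(t^2+6t+68)$ bound.
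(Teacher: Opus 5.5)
Your proposal is not yet a proof. The one step you actually establish is the leaf-neighbour observation: a support vertex of a copy of $T$ in $G_i$ is adjacent in $G_{i+1}$ to everything outside that copy. This is correct, and it is also the paper's starting point. But, as the paper stresses, it only shows that $G_2$ has constant diameter. The content of the theorem is the next step: from there the process stops within a number of rounds polynomial in $t$, uniformly over all trees and all $n\ge 2t$. In fact it percolates for every non-$T$-stable starting graph on at least $2t$ vertices. The paper calls this analysis delicate and notes that it is tied to $F$-processes of forests $F$ on host graphs with about $v(F)$ vertices (Question~\ref{q:forest}). Your text gives no argument for this step.

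There are three concrete gaps.

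\textbf{(1) Growth of $U_i$.} The claim that $|U_i|$ increases in every round is asserted, not proved. Your own justification concedes the failure case: a newly completed copy of $T$ may have all its support vertices already in $U_i$, and nothing you say rules this out. Worse, combined with your first bullet, this claim would give $M_T(n)\le t+O(1)$. That is essentially Conjecture~\ref{conj:tree}, which the paper lists as open and which is far stronger than the theorem. So at least one of the two claims cannot be as easy as you present it. You also contradict the growth claim yourself when you attribute quadratically many rounds to a ``degree-climbing phase''.

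\textbf{(2) The first bullet.} It fails as stated. A vertex of $U_i$ may miss up to $t$ vertices, so $t-1$ vertices of $U_i$ can be pairwise non-adjacent, and then nothing can be ``placed greedily inside $U_i$''. A $K_{t-1}$ would indeed finish the process within two rounds, but forcing one among almost-universal vertices needs on the order of $t^2$ of them. Moreover, the theorem covers every $n\ge 2t$. There, ``adjacent to all but at most $t$ vertices'' only means degree at least $n-1-t$, which is $t-1$ when $n=2t$. So your potential carries almost no information in the regime $n=\Theta(t)$, which the statement includes.

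\textbf{(3) The counting argument.} The double count and the invariant ``some vertex outside $U_i$ has degree at least $i-|U_i|$'' are not stated precisely enough to be checked. No mechanism is given that turns them into a bound on the number of rounds. The figures ``$t/2$ values of $|U_i|$ with $t/4$ rounds each'', and the absorption of $6t+68$ into initial and final rounds, are read off from the target expression rather than derived.
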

 
As discussed above, a neighbour of a leaf in a copy of the target tree $T$ in $G_1$ becomes an almost universal vertex during the second round, and hence the diameter of $G_2$ already becomes constant, so the statement of Theorem~\ref{thm:trees} perhaps does not come across as unexpected. The actual proof however, that from constant diameter the percolation process  finishes in constant time, turned out to be a considerable challenge. Indeed, the analysis needed to handle all the great variety of trees and choices of starting graph is quite delicate. Moreover, extra ideas were needed to achieve an upper bound whose dependence on  $t$  is polynomial. An interesting question is to investigate how tight the upper bound of Theorem \ref{thm:trees} is.

    \begin{prob}[\cite{FMSz2}]\label{prob:trees}
        \label{q:tree}
    For $t\in \N$, determine $$M^*(t):=\max\{\limsup_{n\rightarrow \infty}M_T(n): T \mbox{ is a } t\mbox{-vertex tree}\}.$$
    \end{prob}

We avoid simply taking a maximum here as it may be that small values of $n$ interfere with the asymptotics and we take the $\limsup$ instead of a limit as we cannot rule out the possibility that $M_T(n)$ oscillates with $n$. 
With this notation, Theorem \ref{thm:trees} gives that $M^*(t)=O(t^2)$ and stars (Example \ref{ex:star})  give $M^*(t)\geq t-1$. 
We  expect that stars give the longest maximum running time.

    \begin{conj}[\cite{FMSz2}] \label{conj:tree}
    For all $t\in \N$, we have $M^*(t)=t-1$. 
    \end{conj}

\subsection{Cycles} 
As discussed in the introduction, the $K_3$-process always finishes in at most $\lceil\log_2 (n-1)\rceil$ rounds, and taking the path $P_n$ to be the starting graph realises this. 
This is still a quite fast  infection spread, but not constant anymore.  With the proof of this in mind it should be of no great surprise that the maximum running time of the $C_k$-process for any cycle length $k\geq 3$ is  of  order $\log_{k-1} n$. Indeed, the key observation again is that in one step of the process the distance between any two vertices along a shortest path between them is essentially reduced to its $1/(k-1)$-fraction. 
The next theorem determines the precise value of $M_{C_k}(n)$ for all $k\geq 3$ and all $n$ sufficiently large, giving the first infinite family of graphs $H$ (other than stars and paths) for which the exact running time $M_H(n)$ has been determined. Indeed, the only other non-trivial cases known is when $H=K_4$ \cite{matzke2015saturation,przykucki2012maximal}.

    \begin{theorem}[\cite{FMSz1}]\label{thm:cycles}
    Let $k\geq 3$. For sufficiently large $n\in\N$ we have
        \begin{equation}\label{eq:cycles_main}
        M_{C_k}(n) = \begin{cases}
        \ceil*{\log_{k-1} (n+k^2-4k+2)} & \text{if $k$ is odd} ;\\
        \ceil*{\log_{k-1}\left(2n+k^2-5k\right)} & \text{if $k$ is even}.
        \end{cases}
        \end{equation}
    \end{theorem}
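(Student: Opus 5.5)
The plan is to prove matching upper and lower bounds by reducing the $C_k$-process to a statement about lengths of walks, in analogy with Example~\ref{ex:triangle}.

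\textbf{Key observation.} A non-edge $xy$ is infected in round $i$ exactly when $G_{i-1}$ contains an $x$--$y$ path on $k$ vertices, that is, with $k-1$ edges. Components never merge, since every infected pair already lies in a common component. I would first prove the following. If $\dist_{G_{i-1}}(x,y)=d$, then $\dist_{G_i}(x,y)\le \lceil d/(k-1)\rceil$ up to a bounded additive parity/shortcut error: cut a shortest path into segments of $k-1$ edges, and each segment's endpoints become adjacent. A remainder segment that is too short must be handled by using detours into the rest of the graph. Conversely, distances cannot shrink by more than a factor $k-1$ per round, because each new edge replaces a path of $k-1$ old edges. Iterating these two estimates gives $\tau=\log_{k-1}n+O(1)$. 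The real content is pinning down the exact additive constants.

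\textbf{Parity and the two cases.} For even $k$ the cycle $C_k$ is bipartite, so every new edge joins the two sides of a bipartite component, and a bipartite component stays bipartite. Paths with $k-1$ edges connect vertices at odd distance, so the process on a bipartite component converges to the complete bipartite graph. This forces me to track distances of odd parity only, which roughly halves the effective length. I expect this is where the $2n$ (effectively half-distances) in the even formula comes from. For odd $k$, a non-bipartite component eventually becomes complete. Here the relevant quantity is how long it takes for an odd cycle to be created and then to spread, which is what produces the additive $k^2-4k+2$.

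\textbf{Lower bound.} I would take the starting graph to be a long path, possibly with a short gadget attached at one end (an odd cycle or pendant structure of size about $k^2$) so that the extremal configuration is realised. I would then track the explicit structure of $G_i$ as a ``power-like'' path: vertices $j,l$ are adjacent in $G_i$ iff $|j-l|$ lies in an explicit set determined by $(k-1)^i$ and parity. This lets me verify that some pair survives until round $\lceil\log_{k-1}(\cdot)\rceil$.

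\textbf{Upper bound.} For any connected $G$ I would show by induction an invariant of the same form: in $G_i$, any two vertices whose $G$-distance is at most about $(k-1)^i$ (minus the correction), with the right parity, are adjacent. I expect the main obstacle to be the boundary effects. Near the ends of the path, and in short components, there may not be room to find a path of exactly $k-1$ edges. Handling this requires walks that ``bounce'' and then conversion of walks into paths, and this is exactly what produces the $k^2$-order additive terms. I would first try to prove the clean asymptotic version, then do a careful case analysis on the remainder $d \bmod (k-1)$, with $n$ assumed large enough to absorb small components.
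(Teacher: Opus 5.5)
There is a genuine gap. The theorem is about the exact additive constants, and your plan contains no mechanism that produces them; the mechanisms you do suggest are wrong. For odd $k$, every edge added in round $1$ already closes an odd cycle $C_k$. So ``the time for an odd cycle to be created and spread'' is at most one round and cannot yield $k^2-4k+2$. A case analysis on $d \bmod (k-1)$ will not yield it either. The constant is the Frobenius number of the numerical semigroup $\langle k-2,k\rangle$, as the survey indicates. In round $2$ on a path, replacing a step of length $k-1$ by a $+1$ step costs $k-2$, and replacing it by a $-1$ step costs $k$. In later rounds these deficits add up. So in the bulk of a long path, a pair at distance $d\le (k-1)^t$ is joined after round $t$ essentially iff $(k-1)^t-d\in\langle k-2,k\rangle$. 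For instance, for $k=5$ a pair at distance $9<4^2$ is not joined after round $2$ (four steps from $\{\pm1,\pm4\}$ cannot sum to $9$), precisely because $16-9=7$ is the Frobenius number of $\langle 3,5\rangle$. This description of the lengths realised after $t$ rounds is what pins down the exact formula: the path gives the lower bound, and a geodesic $P\subseteq G$ with $P_t\subseteq G_t$ gives the upper bound. Your ``explicit set determined by $(k-1)^i$ and parity'' and ``correction'' are left unspecified exactly where this content is needed.

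For even $k$, your explanation points in the wrong direction. ``Halving the effective length'' would predict $\log_{k-1}(n/2)$, and bipartite components are in fact fast. Take $k=4$ and $n=3^t$: along a geodesic, every odd distance up to $3^t$ is realised by three forward steps of odd lengths at most $3^{t-1}$. Hence every bipartite starting graph stabilises within $t$ rounds, whereas the formula gives $\lceil\log_3(2\cdot 3^t-4)\rceil=t+1$. The $2n$ comes from \emph{non-bipartite} components. Every edge of $G_i$ corresponds to an odd walk in $G$ of length at most $(k-1)^i$, so a pair at even distance can only be joined via an odd walk. In a long path with an odd cycle at one end, two vertices at distance two at the far end have shortest odd walk of length about $2n$; this is why the extremal graph for even $k$ is not a path. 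So the odd-cycle gadget is forced, not optional. Moreover, your upper-bound invariant, which only concerns pairs of the right parity at $G$-distance about $(k-1)^i$, says nothing about the wrong-parity pairs that actually determine the running time. Bounding those requires controlling shortest odd walks, which are not paths. The geodesic monotonicity argument therefore does not transfer, and turning such walks into paths with exactly $k-1$ edges of $G_{i-1}$ is a central step, not a boundary effect.
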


 What is perhaps unexpected is  that the jumps of the monotone increasing function $M_{C_k}(n)$, i.e. those $n\in\N$ with $M_{C_k}(n+1) =  M_{C_k}(n)+1$, behave differently in terms of $k$ depending on the parity of $k$. For odd $k$ the jumps are close to powers of $k-1$ while for even $k$ the jumps are near one half times powers of $k-1$.  Moreover, the precise location of the jumps is determined by a quadratic function of $k$ in both the even and odd cases. This function turns out to be controlled by the  \textit{Frobenius number} of the numerical semi-group generated by $k-2$ and $k$, that is, the largest  natural number that cannot be expressed as an integral linear combination of $k-2$ and $k$ with non-negative coefficients. 
 
 The difference in behaviour between the odd and even case is also witnessed by the starting graphs that achieve the maximum running time.  Whilst the path still provides the maximum running time $M_{C_k}(n)$ when $k$ is odd, for even $k$ the optimal starting graph is slightly different (see Figure \ref{fig:even_cycles}) and the analysis of the $C_k$-process on this graph is more delicate. 

    \begin{figure}[h]
    \centering
    \includegraphics[width=0.7\linewidth]{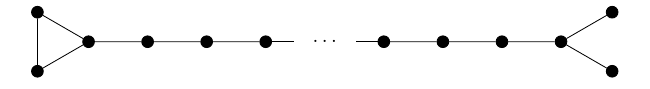}
    \caption{A starting graph maximising the running time of the $C_{k}$-process for $k\in 2\mathbb{N}$.}
    \label{fig:even_cycles}
    \end{figure}
    
 \section{Graph parameters and the running time} \label{sec:graph parameters}
In Theorem \ref{thm:trees} and Theorem~\ref{thm:cycles} we saw that the maximum running time of the $H$-process is constant for trees and logarithmic for cycles, respectively. These are in stark contrast with cliques of order $4$, where the running time of the process could be at least linear, and the wide array of graphs  encountered in Section~\ref{sec:quad} where the running time could be (almost) quadratic. 
Which properties of the infection rule $H$ cause the $H$-process to always finish quickly?  This is what we study in this section, and try to identify necessary and sufficient conditions for sub-linear or linear maximum running time, in terms of relevant graph parameters, involving the degrees, the connectivity and the tree-width of the infection rule. In this section we focus  on connected infection rules (see Section \ref{sec:discon} for the disconnected case). 

\subsection{The role of the degrees} \label{sec:small degrees}
In our analysis of  trees, the existence of leaves plays a crucial role in showing that the running time is constant. In the analysis of the percolation threshold $p_c(H)$~\cite{balogh2012graph} in $G(n,p)$ the presence of a single pendant vertex in the infection rule $H$ also plays a central role in pushing the threshold towards a sparser range  (see Section \ref{sec: H perc thresh}).  
One cannot help but wonder whether the minimum degree being $1$ has anything to do with small maximum running time in general. 
It turns out that, with the exception of cycles, it is indeed \emph{necessary} for a sub-linear running time. 

    \begin{theorem}[\cite{FMSz2}]\label{thm:23bound}
Let $H$ be a connected graph. If $\delta (H) \geq 2$ and $\Delta (H) \geq 3$ then $M_H(n)= \Omega (n)$. 
 \end{theorem}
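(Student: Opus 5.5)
The plan is to build a starting graph of order $n$ in which the infection travels along a linear number of "segments", each of which can only be triggered once the previous one has been completed. This mirrors Lemma~\ref{lem:chain lower}, but replaces the strong condition \ref{(*)}, which fails for general $H$ since $H$ need not be $(2,1)$-inseparable, with a \emph{speed-of-light} argument. Concretely, I would take a simple $H$-chain $(H_i,e_i)_{i\in[\tau]}$ with $\tau=\Theta(n/v(H))$, with starting graph $G'=\cup_i H_i-\{e_1,\ldots,e_\tau\}$. I would choose the gluing edges so that $e_{i-1}$ and $e_i$ are vertex-disjoint in $H_i$ and as far apart in $H_i$ as possible. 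If needed, I would also insert between consecutive copies a constant-length "buffer": a short path or ladder subdividing the passage. With this, the chain graph looks like a long thin strip in which $V(e_i)$ is a $2$-vertex cut separating the first $i$ copies from the rest. The upper end is easy: by definition of a simple chain, $e_i\in G_i$ for every $i$, so the chain is completed by time $\tau$. What must be shown is that the process really needs $\Omega(\tau)$ rounds to infect $e_\tau$.

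The key step is a locality lemma. I would show by induction on $t$ that every edge of $G_t$ either lies inside $V(H_1)\cup\dots\cup V(H_{ct})$ for a constant $c=c(H)$, or already belongs to $G'$, or more generally lies inside a single block $V(H_i)$. This is where both hypotheses enter. First, $\delta(H)\ge2$ means a copy $F$ of $H-f$ has at most two vertices of degree $1$ and otherwise minimum degree $\ge2$. Hence $F$ cannot contain "dangling" pieces hanging across a cut. Any $F$ that straddles the cut $V(e_j)$ must send at least two internally disjoint routes through it, or be separated by $e_j$ together with the missing edge $f$. Second, $\Delta(H)\ge3$ is exactly what rules out the cycle phenomenon. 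There, $C_k-f$ is a path and can embed along the strip with its endpoints far apart, so the process short-cuts distances geometrically (Theorem~\ref{thm:cycles}). When $H$ has a vertex of degree $\ge3$ and $\delta(H)\ge2$, the graph $H-f$ is not a path. I would aim to use this to show that any embedding of $H-f$ into the current graph has its two "loose ends" (the endpoints of $f$) within bounded distance along the strip, i.e. within $O(v(H))$ consecutive blocks. Hence each round can only advance the infected frontier by $O(1)$ blocks, giving $\tau_H(G')\ge \tau/c=\Omega(n)$.

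To make the locality lemma robust, I would prefer to replace the strip built from copies of $H$ by a more controlled host. This host would be a long "ladder-like" graph of bounded width, in which the only cuts of size at most $2$ are the designated gluing edges and each block induces $H$ minus the gluing edges. I would then prove that every subgraph of minimum degree $\ge2$ containing a vertex of degree $\ge3$, with at most $v(H)$ vertices, is confined to $O(1)$ consecutive blocks. Newly infected edges are always short-range, joining vertices in nearby blocks, so the same bounded-width structure persists inductively.

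The main obstacle is this inductive persistence. The infected edges themselves could create new copies of $H-f$ spanning further, and since $H$ is arbitrary I cannot simply forbid unwanted copies as in Lemma~\ref{lem:chain lower}. My first attempt would be to work with a weaker invariant: at time $t$ every edge joins vertices at most $B$ blocks apart, for some fixed $B$, and blocks beyond position $ct$ are untouched. I would then use the $\Delta\ge3,\ \delta\ge2$ structure of $H-f$ to show that any copy spans at most $B'$ consecutive blocks, with $B'$ depending only on $H$. This bounded reach, rather than the exact stability of the chain, is what yields the linear lower bound.
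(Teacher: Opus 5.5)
Your proposal has a genuine gap. The locality lemma carries the entire argument, and it is not only unproved but false for the host you propose. First, as literally stated, your invariant allows new edges inside any single block $V(H_i)$ at any time. Since $e_\tau\subseteq V(H_\tau)$, this cannot delay $e_\tau$, so what you actually need is the frontier form: new edges of $G_t$ span at most $B$ blocks, and blocks beyond position $ct$ are untouched. Second, you only treat connected copies of $H-f$. When $f$ is a bridge of $H$, the graph $H-f$ is disconnected and the two ends of $f$ can be embedded arbitrarily far apart. Concretely, let $H$ be three disjoint triangles $T_1,T_2,T_3$ joined by bridges $ap$ and $qx$, with $a\in T_1$, distinct $p,q\in T_2$ and $x\in T_3$. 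Then $H$ is connected with $\delta(H)=2$ and $\Delta(H)=3$. Because $H$ has three edge-disjoint triangles, every block inducing $H$ minus at most two edges still contains an intact triangle; this covers your simple chain and your ladder-like variant. In round $1$ the copy $H_1$ is completed. Its dumbbell $T_1\cup\{ap\}\cup T_2$, together with the intact triangle of any block $V(H_j)$ with $j\ge 3$, is a copy of $H-qx$. So by round $2$ the image of $q$ in $H_1$ is adjacent to every block of the chain. Both halves of the frontier invariant fail at $t=2$. As long as blocks are copies of $H$ minus at most two edges, no choice of gluing edges or buffers repairs this.

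Third, even for $2$-edge-connected $H$, where every $H-f$ is connected, your bounded-span invariant is not self-reproducing. Once edges of span $B$ exist, a copy of $H-f$ may have its ends roughly $(v(H)-1)B$ blocks apart. The fact that $H-f$ is not a path does not prevent this: for a theta graph, $H-f$ is a cycle with pendant paths, which can stretch along earlier edges just as $C_k-f$ does. You defer this as the ``main obstacle'', but it is the whole difficulty.

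What is missing is a host whose not-yet-reached part cannot contain any piece of a copy of $H-f$. This is where the hypotheses genuinely enter. A component of $H-f$ lying inside an induced path is itself a path, and by $\delta(H)\ge 2$ its two ends are the ends of $f$. That forces $H$ to be a cycle, which $\Delta(H)\ge 3$ excludes. For $\delta(H)=2$ this already gives a short proof. Take a clique on $v(H)$ vertices with a pendant path $w_1\cdots w_L$ attached. The untouched tail hangs off a single cut vertex, so any copy of $H-f$ meeting it contains a consecutive initial segment of fewer than $v(H)$ tail vertices. Hence the front advances by less than $v(H)$ per round. Lemma~\ref{lem:cliques} shows the whole path is eventually absorbed, so the running time is at least $L/v(H)$. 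For $\delta(H)\ge 3$ a pendant path is never absorbed, so that case needs yet another gadget, and nothing in your chain-based plan supplies one.
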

 \begin{cor}\label{cor:23bound}
 If $H$ is connected and $M_H(n) = o(n)$ then $\delta (H) = 1$ or $H$ is a cycle. 
    \end{cor}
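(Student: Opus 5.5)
Corollary~\ref{cor:23bound} is the contrapositive of Theorem~\ref{thm:23bound}, so the plan is a direct case analysis on the degrees of $H$, taking Theorem~\ref{thm:23bound} as given.

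Let $H$ be connected with $M_H(n)=o(n)$, and suppose $\delta(H)\neq 1$. We show that $H$ is a cycle.

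First dispose of $\delta(H)=0$. A connected graph has no isolated vertices unless it is a single vertex. For $H=K_1$ the process is trivial: no edge can ever be the last missing edge of a copy of $K_1$, so $M_H(n)=0$. This degenerate case must be excluded by convention, since infection rules throughout the survey are assumed to have at least one edge (compare Definition~\ref{def:inseparable}). We therefore take $v(H)\geq 2$, and connectivity then gives $\delta(H)\geq 1$, hence $\delta(H)\geq 2$.

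Next, if $\Delta(H)\geq 3$, Theorem~\ref{thm:23bound} gives $M_H(n)=\Omega(n)$, contradicting $M_H(n)=o(n)$. Hence $\Delta(H)\leq 2$. Combined with $\delta(H)\geq 2$, the graph $H$ is $2$-regular. A connected $2$-regular graph is a cycle, which completes the argument.

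The only point needing care is the degenerate case $H=K_1$, which is settled by the standing convention that $H$ has an edge. No step presents a genuine obstacle.
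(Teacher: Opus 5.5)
Your argument is correct and matches the paper, which states this corollary without separate proof as an immediate consequence of Theorem~\ref{thm:23bound}: a connected graph with $\delta(H)\geq 2$ and $\Delta(H)\leq 2$ is $2$-regular and hence a cycle. Your observation that the degenerate case $H=K_1$ (where $M_H(n)=0$) must be excluded by the implicit convention that infection rules have an edge is valid, and the paper leaves it unstated.
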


The  natural question arises: Is $\delta (H) = 1$ not only (essentially) necessary, but also sufficient for the $H$-process to stabilise in sub-linear time on an arbitrary starting graph? The following simple example points in this direction. It shows that adding a pendant vertex could reduce the maximum running time from quadratic to constant. 

    \begin{example}[\cite{FMSz2}]\label{ex:cliquependant edge}
    Let $k\geq 3$ and $K_k^+$ be the $(k+1)$-vertex graph formed by taking a clique of size $k$ and adding a pendant edge. Then $M_{K_k^+}(n)\leq 3$.
    \end{example}
    \begin{proof}
    Let $G_0$ be an arbitrary $n$-vertex starting graph. Unless the $K_k^+$-process $(G_i)_{i\geq 0}$ immediately stabilises there is some copy of $K_k^+$ in $G_1$. Let $U\subset V(G)$ be the set of $k$ vertices that form a clique in this copy of $K_k^+$. Then in $G_2$, we have that $U$ and $V(G)\setminus U$ form the two parts of a complete bipartite graph. Thus any vertex in $V(G)\setminus U$ is contained in a $k$-clique and consequently in $G_3$ any edge within $V(G)\setminus U$ is added, making $G_3$ the complete graph. 
    \end{proof}

Somewhat surprisingly, despite all the above,  a pendant vertex is  {\em not} sufficient for a sub-linear or even sub-quadratic maximum running time. 

    \begin{theorem}[\cite{FMSz2}] \label{thm:simulate}
    There is a connected  $H$ with $\delta(H)=1$ and $M_H(n)=\Omega(n^2)$. 
    \end{theorem}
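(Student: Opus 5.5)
We will construct $H$ so that the pendant vertex is harmless, and then import a quadratic chain construction for a graph of the type in Theorem~\ref{thm:quadratic}. The guiding idea is the title ``simulate''. We take a dense, highly inseparable core $J$, for instance a large clique $K_m$ or a $(\lceil m/2\rceil,1)$-inseparable graph, which by Theorem~\ref{thm:quadratic} admits a quadratic ladder chain. We then attach to it a pendant structure whose embedding is forced into a place where it can never create new copies. The obstruction is Example~\ref{ex:cliquependant edge}: if the pendant edge hangs off a vertex of the core, the core vertices become almost universal immediately. So we will make the pendant vertex hang off a vertex $x$ that is \emph{rigid}. Concretely, let $H$ consist of two vertex-disjoint copies of $J$ joined by a long path (or a bridge-like gadget) with a pendant leaf attached at a middle vertex of that path, or more simply $J$ with a vertex $x$ of large degree into $J$ carrying a leaf $y$. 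The intended effect is that in any copy of $H$ minus an edge, the leaf's neighbour must itself sit inside a copy of $J$-type structure.

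Plan of steps. First, we construct the starting graph $G_0$ as the starting graph of a $J$-ladder chain as in Theorem~\ref{thm:quadratic}, of quadratic length $\tau$. We make each copy $H_i$ of $H$ be a copy $J_i$ of $J$ from that chain, plus the extra pendant structure. We place all pendant leaves at a single fixed \emph{sink} vertex $s$, or at a small set of reservoir vertices, that is adjacent to every vertex of the chain. Thus every vertex already has the pendant edge available, and the leaf condition is automatically satisfied everywhere. Second, we show that copies of $H-e$ arise only when $e$ lies in the $J$-part: any $H$-copy missing an edge still contains a copy of $J$ minus at most one edge. Third, we verify the key claim that $H$-copies missing an edge in $G_t$ correspond exactly to $J$-copies missing an edge in the chain graph. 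This yields the analogue of condition~\ref{(*)}, namely that no unwanted edges, in particular none incident to $s$ or to the leaves, are ever infected. Lemma~\ref{lem:chain lower}, in its proper-chain generalisation, then gives $\tau_H(G_0)=\Omega(n^2)$.

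The main obstacle is the sink. Since $s$ is adjacent to everything, any missing edge $uv$ with $u,v$ both adjacent to $s$ may be completed by a copy of $H$ that uses $s$ inside the $J$-part. That copy would use $s$ as one of the high-degree core vertices together with chain vertices, and this threatens the ``no unwanted copy'' property. To block this we would choose $J$ with $\delta(J)$ larger than any degree realised inside the chain neighbourhoods. Then $J$-copies through $s$ would require a dense $J$-like neighbourhood inside $N(s)$, and we would check that the chain graph restricted to $V\setminus\{s\}$ contains no copy of $J$ minus a vertex and an edge, except inside a single $J_i$. This is an inseparability-type condition again; one route is to demand that $J$ is $(\lceil m/2\rceil+1,1)$-inseparable so that removing $s$ still leaves the ladder-chain argument intact. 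If the single sink proves too strong, the fallback is to use distinct sinks attached via a high-girth bipartite graph, as in Lemma~\ref{lem:auxiliary_bipartite_graph}, so that no copy of $J$ can use two sinks. That keeps $v(G_0)=O(n)$ while avoiding the degenerate dense neighbourhood.
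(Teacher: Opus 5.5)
Your plan has a genuine gap. Nothing in your $H$ pins down which vertex plays $x$, the neighbour of the pendant vertex, and the universal sink $s$ actually creates the copies you need to exclude. Removing the pendant edge from $H$ leaves the non-pendant part plus an isolated vertex. So \emph{every} copy of $J$ together with $x$ in $G_t$, not just the intended one with $x\mapsto s$, makes the image of $x$ adjacent to all vertices outside that copy in the next round.

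Since $s$ is adjacent to everything, for every completed $J_i$ and every $v\in V(J_i)$, the set $(V(J_i)\setminus\{v\})\cup\{s\}$ spans a copy of $J$ with $s$ in the role of $v$. Then $v$ itself, being adjacent to $s$ and to its $J_i$-neighbours, is a candidate for $x$. Your proposed check explicitly permits copies of $J$ minus a vertex inside a single $J_i$, yet these are exactly the dangerous configurations. No condition on $\delta(J)$ or on the inseparability of $J$ can remove them, because they live inside the copies $J_i$ that the chain must contain. The multi-sink fallback only stops a copy from using two sinks, whereas the bad copies use just one.

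In the most natural instance, $J=K_m$ with $x$ joined to all of $J$, your $H$ is $K_{m+1}^+$. If instead the leaf hangs directly off a vertex of $J=K_m$, it is $K_m^+$. Either way $M_H(n)\le 3$ by Example~\ref{ex:cliquependant edge}. Your Steps 2--3 also check the wrong direction. It is true but irrelevant that every copy of $H-e$ contains $J$ minus at most one edge. What must be shown is that every copy of $H-e$ in $G_t$ sits in the intended position, and this fails exactly when $e$ is the pendant edge or an edge between $x$ and $J$.

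The missing idea is an \emph{anchor}: a rigid piece of $H$ containing $x$ that provably never occurs in the simulated process, plus one dedicated copy of it in the starting graph. The paper builds $H$ from three parts:
\begin{itemize}
\item a $K_6$, the benchmark;
\item a disjoint $K_9$, the anchor, containing a vertex $u$ adjacent to all other vertices of $H$;
\item a leaf at $u$.
\end{itemize}
The starting graph is the Balogh et al.\ $K_6$ starting graph $G$, plus a disjoint $K_9$ one of whose vertices $w$ is joined to all of $V(G)$.

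The extra input from the benchmark process, which has no analogue in your plan, is that the $K_6$-process on $G$ creates no $K_7$ for $\Omega(n^2)$ rounds. Now argue by induction over these rounds. Every copy of $H-e$ contains a $K_8$. Since $V(G)$ spans no $K_7$ and $w$ is the only vertex of the dedicated $K_9$ with neighbours in $V(G)$, that $K_8$ can only lie inside the dedicated $K_9$. Hence the anchor maps onto that $K_9$ and $u$ maps to $w$, so the only edges added are those of the $K_6$-process on $G$.

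Note that $w$ together with a $K_6$ in $V(G)$ does span a $K_7$, which is the analogue of your sink problem. It is harmless precisely because the anchor forces a $K_8$.
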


    This implies that in our quest towards a characterisation of connected graphs $H$ with sub-linear running time we need to identify additional conditions besides the minimum degree being $1$. Can limiting the maximum degree help? 
The graph $H$ in Theorem~\ref{thm:simulate} has a vertex of full degree (of degree $15$, see Figure \ref{fig:simulate}). The construction in our next result shows that even limiting the maximum degree to $3$ is not helpful to guarantee sub-linear maximum running time. 

    \begin{theorem}[\cite{FMSz2}]\label{thm:counterexample}
    There exists a connected graph $H$ with minimum degree $\delta(H)=1$ and maximum degree $\Delta(H)=3$ satisfying $M_H(n) = \Omega(n)$.
    \end{theorem}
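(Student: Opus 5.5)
Since the statement only asks for existence, the plan is to exhibit one explicit $H$ and build a linearly slow starting graph from a chain. I would take $H$ to be a small cubic-ish core with one pendant vertex attached, and aim to make the pendant vertex harmless. Recall that a pendant vertex $u$ attached to $v$ in a copy of $H$ makes $v$ almost universal in the next round. The key idea is therefore to choose $H$ so that the pendant edge hangs off a vertex $v$ whose role is \emph{rigid}. Concretely, copies of $H$ minus an edge appear only in a controlled way, and the leaf-neighbour $v$ is already adjacent to everything relevant, so that the explosion caused by the leaf is absorbed.

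A cleaner alternative, which I would try first, is a ``simulation'' design that exploits the maximum degree. Let $H$ consist of a long odd cycle $C$ (to rule out fast bipartite-type behaviour) together with chords forming a rigid cubic structure, plus one pendant edge. Take a starting graph $G$ that contains one vertex $x$ already adjacent to all other vertices. Then the pendant edge of $H$ can always be embedded using $x$ or its neighbours, and the $H$-process on $G$ behaves like the $H'$-process on $G-x$, where $H'=H$ minus the leaf. This $H'$ has $\delta\ge 2$ and $\Delta=3$, so it is not a cycle. By Theorem~\ref{thm:23bound} (or rather by its proof, a chain of copies of $H'$), $H'$ admits a starting graph $G^*$ with $\tau_{H'}(G^*)=\Omega(n)$.

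The construction I would then propose is $G:=G^*\vee\{x\}$, the graph $G^*$ with an added universal vertex $x$. There is a snag: copies of $H$ minus an edge in $G$ may use $x$ in the core, not just as the leaf. So $H'$ must be chosen such that $H'$ with one vertex made universal contains no new copies of $H'$ minus an edge; this is not automatic, since $\Delta(H')=3$ allows $x$ to play a degree-$\le 3$ vertex. The fix I would try is to give $H'$ large girth (for instance a cubic graph of girth at least $5$), and to take $G^*$ a simple chain of copies of $H'$ of linear length, which is triangle-free. Then, since $x$ is universal, any copy of $H'$ minus an edge using $x$ in the core needs $x$ together with two $G^*$-neighbours whose structure closes short cycles. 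High girth of $H'$ should forbid this as long as $G^*$ itself has girth comparable to $H'$.

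The steps are as follows:
\begin{enumerate}
\item Fix $H'$ cubic, $(2,1)$-inseparable if possible, with large girth, and set $H=H'$ plus a pendant edge.
\item Build a simple $H'$-chain of length $\Theta(n)$ and show, as in Lemma~\ref{lem:chain lower} and Observation~\ref{obs:insep simple}, that its edges $e_i$ are added exactly at time $i$ in the $H'$-process.
\item Add the universal vertex $x$, and check by induction on rounds that the only copies of $H$ minus an edge in $G_{i-1}$ are an $H_i$-copy minus $e_i$ extended by a leaf, or $H$ with its pendant edge missing. The latter occur everywhere but only add edges at $x$, which are already present.
\end{enumerate}
Step 3 is the main obstacle, namely excluding core copies through $x$. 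If girth arguments fail, I would instead make the leaf hang off a vertex of degree $2$ in the core, so that $x$ can only ever serve as the leaf and not as a core vertex.
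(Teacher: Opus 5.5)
Your opening paragraph identifies the right danger and the right cure: the leaf-neighbour must be locked to a vertex that is already universal. The construction you then build does not implement this, and Step 3 fails fatally. Write the pendant edge as $uv$ with $v\in V(H')$. Then $H-uv\cong H'\sqcup K_1$. So whenever the process contains a \emph{complete} copy of $H'$ with some vertex $y$ in the role of $v$, every vertex $w$ outside that copy can play $u$, and $yw$ is infected in the next round. These edges are incident to $y$, not to $x$, so your claim that such copies ``only add edges at $x$'' is false.

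In your intended dynamics $e_i$ appears at time $i$, so $H_i$ is a complete copy of $H'$ in $G_i$. Its $v$-vertex $a_i$ is therefore adjacent to everything outside $V(H_i)$ by time $i+1$. In a simple chain $V(H_i)\cap V(H_j)=\emptyset$ for $|i-j|\ge 2$, so by time $2v(H')$ the vertices $a_1,a_3,\dots,a_{2v(H')-1}$ span a clique of size $v(H')$. Any vertex $w$ outside these copies forms, with $v(H')-1$ of them, a clique containing $H'$ with $w$ in the role of $v$. So $w$ becomes universal, and the process percolates $O(1)$ rounds later. This is exactly the mechanism of Example~\ref{ex:cliquependant edge}, and it gives $\tau_H(G)=O(1)$. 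The universal vertex $x$ only helps copies of $H'-e$ extend to copies of $H-e$; it does nothing against this explosion. Your fallback of hanging the leaf at a degree-$2$ vertex of $H'$ changes nothing, because the role of $v$ is taken by fresh chain vertices in every round.

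The missing idea is an \emph{anchor}: a rigid part of $H$ that can only ever be embedded onto one fixed gadget of the starting graph. This forces the leaf-neighbour always to map to the same, already universal vertex. In the paper, $H$ consists of three pieces:
\begin{itemize}
\item the benchmark $B=C_6$ plus a chord;
\item a triangle-rich ad hoc anchor $A$;
\item a pendant vertex,
\end{itemize}
with $B$ and the pendant vertex attached via two distinct anchor vertices $u\neq u'$ to keep $\Delta(H)=3$. The starting graph is a slow $B$-starting graph together with one fixed copy of $A$, wired as in the $K_6$ example. The crucial extra input is a strengthening of Theorem~\ref{thm:23bound}: a linear-time $B$-process in which every $G_i$ is bipartite. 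Since $A$ contains many triangles, it can never appear elsewhere. Hence every copy of $H$ minus an edge consists of a copy of $B$ minus an edge in the simulated process plus the fixed anchor.

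Smaller problems with your plan:
\begin{itemize}
\item If $H'$ is cubic, the attachment vertex has degree $4$ in $H$, violating $\Delta(H)=3$.
\item Cubic graphs are always $(2,1)$-separable (delete two neighbours of a vertex and its third edge), so Observation~\ref{obs:insep simple} is unavailable in Step 2.
\item The girth heuristic in Step 3 is wrong. The universal $x$ can replace an endpoint $z\in e_i\setminus e_{i-1}$ inside $H_i$, giving a copy of $H'$ minus $e_{i-1}$ already in $G_0$. That copy has exactly the cycle structure of $H_i$, so no girth condition excludes it.
\end{itemize}
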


We remark that the maximum degree condition in the theorem cannot be improved to 2 as paths have constant running time by Theorem \ref{thm:trees}. It would be very interesting to determine in general to what extent the limiting of the maximum degree of the infection rule contributes to small maximum running time. The following questions point in this direction. 

    \begin{qn} \label{qn:maximumdegree}
    Let $H$ be a connected graph with $\delta (H)=1$.
    \begin{itemize}
\item  What is the largest maximum running time that could occur if $\Delta (H) = 3$? Is it true that $M_H(n) = O(n)$?
  \item   What is the smallest possible maximum degree $\Delta(H)$ of $H$ if $M_H(n) = \Omega(n^2)$? 
  \end{itemize}
    \end{qn}

It is possible to slightly modify the proofs of the above Theorems~\ref{thm:simulate} and \ref{thm:counterexample} to show that an additional condition in terms of the average degree of $H$ is of no help in guaranteeing faster running time. 
This modification of Theorem~\ref{thm:counterexample} for example creates infection rules with minimum degree $1$, average degree arbitrarily close to $2$, and yet, quadratic maximum running time. On the other hand, connected infection rules with average degree {\em at most} $2$ are unicyclic. These are trees with exactly one extra edge. It is possible, by a small modification of the proof of Theorem~\ref{thm:trees}, to show that these, with the exception of cycles, have constant maximum running time.

\subsection{Proof technique: Simulation} \label{sec:sim}
Let us consider Theorem \ref{thm:simulate} giving a graph $H$ with $\delta(H)=1$ and $M_H(n)$ quadratic. How to construct an infection rule with these intuitively contradicting properties?
The initial high-level idea is to take an infection rule for which we already know how  to produce a quadratic running time on some starting graph $G$, for example the infection rule $K_6$, and extend it to a larger infection rule $H$ with a pendant vertex, such that running the $H$-process on an appropriate extension $\tilde{G}$ of $G$ we can ensure that the original $K_6$-process is somehow also taking place inside, and hence the $H$-process on $\tilde{G}$ will be at least as long as the $K_6$-process on $G$.

A good plan, a nice plan, easier to dream up though than to actually carry it out. 
The first idea would be just to attach a pendant edge to $K_6$, but this was shot down already in Example~\ref{ex:cliquependant edge} to have maximum running time $3$. 
The main problem with adding a pendant vertex to a ``slow infection rule'' becomes highly visible: the pendant edge is added immediately to any vertex in any copy of $K_6$, creating many universal vertices and a speedy finish.  

To circumvent this we aim to add a larger structure to $K_6$ and add a pendant edge to that in such a way that the neighbour of the pendant vertex will always be ``locked'' into the same vertex of $V(\tilde{G})$. This will partly happen because of the asymmetry we create, partly because of other properties of the graph $H$. The following construction will work.

 \begin{figure}[h]
    \centering
    \includegraphics[width=0.4\linewidth]{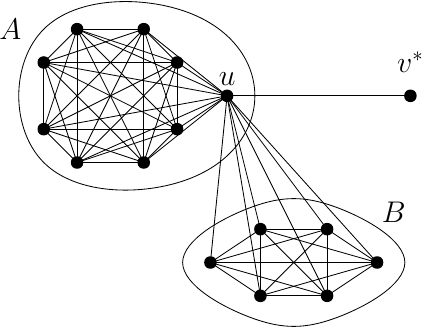}
    \caption{A graph $H$ with minimum degree $1$ and quadratic running time.}
    \label{fig:simulate}
    \end{figure}
The graph $H$ consists of three pairwise vertex-disjoint parts: A clique $B$ of size six, a clique $A$ of size nine  with a special vertex $u$ which is universal, and a pendant vertex $v^*$ attached to it.
As the starting graph $\tilde{G}$ we take the starting graph $G$ of the slow $K_6$-process defined by Balogh et al.~\cite{balogh2019maximum}, together with a disjoint copy $K$ of $K_9$, and make one of its vertices $w$ fully connected to $V:=V(G)$. Then any copy of $K_6-e$ in $V$ during the $H$-process, together with $K$ and an appropriate pendant edge from $w$ back to $V$ forms a copy of $H-e$. So all edges that get infected in the $K_6$-process on $G$ will also get infected in the $H$-process on $\tilde{G}$. We still need to make sure though, that the above described are {\em all} the copies of $H-e$ during the $H$-process on $\tilde{G}$  and hence the process does not speed up compared to the $K_6$-process on $G$, due to an unexpected copy of $H$ minus one of its edges somewhere. The analysis of the $K_6$-process $(G_i)_{i\geq 0}$ given in \cite{balogh2019maximum} implies that no copy of $K_7$ appears for $\Omega (n^2)$ rounds. Using this and an induction over the rounds, one can show that in any copy of $H-e$ the set $A$ has to be mapped to $K$ and $u$ to $w$, as required.

The basic idea of this {\em simulation construction} method is flexible and can be adapted to different settings. The two main ingredients  are some \textit{benchmark} infection rule $B$ (this was $K_6$ above) whose process and running time we intend to simulate  and an appropriate \textit{anchor} graph $A$ (this was $K_9$ above) to which we can connect the benchmark graph $B$ as well as pendent edges to construct $H$.  The starting graph will then be the starting graph for $B$ and a copy of $A$ attached to it. The aim is to show that  $A$ can only appear as itself in the copies of $H$ of the $H$-process so unplanned copies of $H -e$ do not appear. To achieve this we need to construct an anchor graph which is ``far enough'' from anything that will ever appear in the simulated $B$-process on $G$.

For example to prove Theorem \ref{thm:counterexample}, these contemplations motivate us towards the construction of the graph depicted in Figure
\ref{fig:fullcounterexample}.
The  benchmark infection rule $B$ is $C_6$ with a chord which  has at least linear running time by Theorem~\ref{thm:23bound}.
The anchor graph $A$ is a more complex ad hoc construction, together with carefully chosen attachment vertices $u$ and $u'$ (which are now different from each other to control the maximum degree). The proof that this construction works  hinges on an extension~\cite{FMSz2} of Theorem~\ref{thm:23bound} that guarantees that there is some $B$-process  $(G_{i})_{i\geq 0}$ with $\tau_B(G_0)$ linear, such that all of the $G_i$ are bipartite. The crucial property of the anchor graph is  that it contains many triangles, and so is far from bipartite. 

    \begin{figure}[h]
    \centering
    \includegraphics[width=0.7\linewidth]{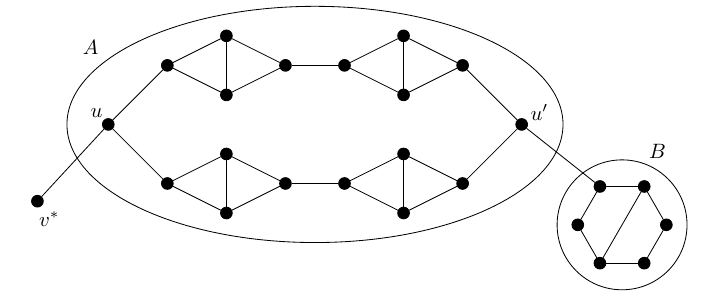}
    \caption{A graph $H$ with $\delta(H)=1$,  $\Delta(H)=3$ and $M_H(n)=\Omega(n)$.}
    \label{fig:fullcounterexample}
    \end{figure}

The small average degree versions of Theorems \ref{thm:simulate} and \ref{thm:counterexample} mentioned at the end of the previous section are obtained
by adding arbitrarily many pendant edges to the neighbour of the pendant vertex in both constructions. The maximum running time does not change since these vertices are locked into their place by the anchor graph already. We discuss some further applications of this simulation construction method in Section \ref{sec:bipartite-rules}.

\subsection{The sub-linear regime}

The characterisation of infection rules with sub-linear maximum running time remains a tantalising open problem.

    \begin{prob}
    Characterise connected graphs $H$ with $M_H(n) = o(n)$. 
    \end{prob}

To answer this question we need a characterisation of graphs $\hat{H}$ with a vertex $u\in V(\hat{H})$, such that for the graph $\hat{H}^{u+}$ obtained from $\hat{H}$ by attaching a pendant edge at vertex $u$, has sub-linear running time.  Even though we do not do not have a precise conjecture for this question, we are confident that the possible maximum running times are quite limited in this range.

    \begin{conj}\label{conj:sublinear}
    If $H$ is connected with $M_H(n) = o(n)$  then $M_H(n) = \Theta(1)$ or $\Theta (\log n)$.
    \end{conj}

To gain some more insight, let us run the $\hat{H}^{u+}$ process on some starting graph $G$. How many rounds can there be when there is a newly infected edge playing the role of the pendant edge in a copy of $\hat{H}^{u+}$? Note that in such a \textit{pendant round} the vertex playing the role of $u$ immediately becomes almost universal, that is, adjacent to all but $v(\hat{H})$ vertices in $V=V(G)$. Now if it so happens that at least $v(\hat{H})^2$ such almost-universal vertices appear at some time $i$, then $G_i$ contains a clique of size $v(\hat{H})$, and consequently the process will run for at most two more rounds due to the presence of the pendant edge in the infection rule. 
This shows that if  there are more than be $v(\hat{H})^3$ pendant rounds then a very fast finish follows. 
Otherwise, the neighbour of the pendant vertex is locked into at most constantly many places, like in our simulation construction where it is locked into one. We would like to think of the process between two of these very few pendant rounds as resembling the $\hat{H}$-process.
Then either $\delta (\hat H) = 1$ and we proceed by induction or 
$\delta (\hat{H}) \geq 2$ and the maximum running time is either logarithmic or linear time by Theorem~\ref{thm:23bound}.  

The following strengthening of Conjecture \ref{conj:sublinear}  follows from the above heuristic. 

\begin{conj}
  For any connected $H$ and $u\in V(H)$ we have $$M_{H^{u+}}(n) = O(1) \mbox{ \ or\   } \quad \Theta(M_H(n)).$$
\end{conj}

Informally one can say that if $\delta(H)=1$ then either one can control the almost universal vertices, in which case they should not affect the asymptotics of the maximum running time, or we cannot, which means that at least $v(\hat{H})^2$ appear quickly in the process, causing it to terminate in constant time.

\subsection{The role of connectivity}\label{sec:connect}

In Theorem~\ref{thm:23bound} we saw that all connected graphs $H$ that have sub-linear running time also have minimum degree at most two.  
When considering graphs with linear running time this condition is no longer necessary, as we know from $K_4$ for example. 
Is there an analogue of this theorem for (at most) linear running time? 
At the end of this subsection we will see that the answer is no, there are graphs with linear maximum running time and arbitrarily large minimum degree. 

It turns out however that the maximum running time being linear does instead imply a limit on the connectivity of $H$. The notion of $(2,1)$-inseparability (Definition \ref{def:inseparable}), which played  an important role in our chain constructions, also turns out to be the crucial notion in separating linear and super-linear maximum running times. The next result is an analogue of Theorem \ref{thm:23bound} for linear running times and shows that $(2,1)$-inseparable infection rules are super-linear in a strong sense.

    \begin{theorem}[\cite{FMSz3}]\label{thm:connectivity}
    For any $(2,1)$-inseparable graph $H$ we have
        \[
        M_H(n) = \Omega\left( n^{1+\tfrac{2}{3v(H)-2}} \right) .
        \]
    In particular if $M_H(n) = O(n)$ then $H$ is $(2,1)$-separable.
    \end{theorem}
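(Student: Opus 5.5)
Since $H$ is $(2,1)$-inseparable, the plan is a chain construction: build one long $H$-chain from many simple $H$-chains placed on a common vertex set and joined by short linking chains, as in Section~\ref{sec:K5}. The running time then comes from the general analogue of Lemma~\ref{lem:chain lower} for proper chains. By Observation~\ref{obs:insep simple}, local unwanted copies of $H-e$ inside a single simple chain are already excluded, because a copy of $H-e$ cannot be split across the $2$-vertex cut given by an edge $e_i$. Linking chains attach to the rest of the construction only through two border edges. The argument from the end of Section~\ref{sec:K5}, with $(2,1)$-inseparability in place of the $K_5$ degree count, should therefore rule out unwanted copies meeting the linking vertices. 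This reduces the problem to a packing question: place $q$ simple chains, each of linear length $\tau_0=\Theta(n)$, on $n$ vertices so that no \emph{crossing} copy of $H-e$ uses edges from two or more of them. This gives running time $\Omega(q n)$, so we need $q \geq n^{2/(3k-2)}$ with $k=v(H)$.

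Since $H$ is arbitrary, we cannot use a Behrendian property. Instead I would make the union of the chains locally tree-like at the level of chains, using a high-girth structure as in Lemma~\ref{lem:auxiliary_bipartite_graph} or the line chains mentioned there. Take a random placement: each simple chain consists of $\tau_0$ copies of $H$ whose new $k-2$ vertices are chosen at random from $[n]$, as in the random construction of Bollob\'as et al. Call a configuration \emph{bad} if it is a copy of $H-e$ whose edges come from at least two distinct chains. $(2,1)$-inseparability means every such crossing copy must be $2$-connected in a strong sense after removing an edge. In particular it must use at least three vertices where chains meet, since pieces from different chains can only meet at shared vertices, and a crossing copy has to close up a cycle through at least two chain pieces.

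The key first-moment step is to count the expected number of bad configurations. There are at most $k$ vertices, each piece comes from one chain and has $\Theta(\tau_0)$ placement choices, and the incidences between pieces cost factors of $1/n$. The inseparability condition forces enough coincidences that the expectation is at most $q^{c}\tau_0^{a} n^{-b}$ with exponents depending on $k$. Optimising gives that $q\approx n^{2/(3k-2)}$ is the largest number of chains for which the expected count of bad configurations is at most a small fraction of $q\tau_0$. We then delete a copy of $H$ (splitting the chain there) for each bad configuration. Alternatively, we alter greedily and keep only chain segments of linear length that contain no bad configuration. The surviving segments are then joined by linking chains. The main obstacle is exactly this counting: showing that every crossing copy of $H-e$ has a cost of at least roughly $n^{3k/2-1}$ relative to its number of choices. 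This needs a combinatorial lemma stating that a $(2,1)$-inseparable $H-e$ split across chains has enough ``junction'' vertices and shared edges. I would prove it by decomposing the crossing copy into maximal monochromatic pieces and applying inseparability to each piece boundary.
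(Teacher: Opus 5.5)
\textbf{There is a genuine gap.} Reducing the problem to packing many simple chains with no crossing copy of $H-e$ is the right frame. However, your proof stops exactly where the work lies.

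\textbf{The key estimate is missing.} You never bound the number of crossing copies. The exponents $c,a,b$ are left unspecified, the value $3k/2-1$ is read off from the target rather than derived, and you yourself defer the needed combinatorial lemma as ``the main obstacle''.

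\textbf{The supporting heuristics are not correct.}
\begin{enumerate}
\item A crossing copy need not have three junction vertices. In a simple $K_5$-chain, take $a\in V(H_i)\setminus e_i$ and $b,c\in V(H_{i+1})\setminus e_i$, and put $f=ac$. Then $F-f$ lies in $H_i\cup H_{i+1}$. If $ac$ happens to be an edge of a second chain, $F$ is a crossing copy of $K_5^-$ whose only junctions are $a$ and $c$.
\item Your random model uses fresh random $k-2$ vertices for each copy, as in Bollob\'as et al. A chain of length $\Theta(n)$ then revisits vertices, so it is not simple, and Observation~\ref{obs:insep simple} no longer excludes local copies. You would need independent random injections of one fixed simple chain.
\item ``A small fraction of $q\tau_0$'' is the wrong target for the alteration. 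Every cut creates a new segment that must be linked using $O_k(1)$ fresh vertices, so you need at most $c_k n$ bad configurations in total. Keeping only bad-free segments of linear length would instead need $O(1)$ bad configurations per chain. This already fails for $K_5$: the configuration in item 1 occurs $\Theta(q)$ times per chain in expectation.
\end{enumerate}

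\textbf{What you would have to prove to pursue this route.} Use the random-injection model and consider the bipartite incidence graph between the monochromatic components of $F$ and its junction vertices. A configuration spread over $t$ chains whose incidence graph has cycle rank $r$ has expected count $O(q^t n^{1-r})$. The $2$-connectivity of $H-e$ forces $r\ge 1$. The missing lemma is that the sum over all configurations is at most $c_kn$ for your choice of $q$. As written, you have a plan, not a proof.

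\textbf{How the paper does it.} The paper avoids all counting with a deterministic construction. This is the high-girth idea you mention in passing and then abandon.
\begin{itemize}
\item Take a $q$-regular bipartite graph $B$ with parts $X,Y$ of size $N$ and girth greater than $2v(H)$ (Lazebnik--Ustimenko--Woldar).
\item Regard the sets $N_B(y)$, $y\in Y$, as the edges of a linear hypergraph on $X$ with no Berge cycle of length at most $k$.
\item Place one simple $H$-chain of length about $q/(k-2)$ on each hyperedge (the \emph{line chains}), and link them as in Section~\ref{sec:K5}.
\end{itemize}
Two chains share at most one vertex, hence no edge. Suppose a copy $F$ of $H-e$ used two chains. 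Since $F$ is $2$-connected by $(2,1)$-inseparability, two edges of different colours lie on a common cycle of length at most $k$. Cutting this cycle into maximal monochromatic paths yields either two hyperedges sharing two vertices or a Berge cycle of length at most $k$; both are impossible. So every copy of $H-e$ lies in one simple chain, and Observation~\ref{obs:insep simple} finishes the argument.

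No probability or alteration is needed. The exponent is not the outcome of an optimisation over bad configurations. It is simply the edge density of these graphs: at girth above $2k$ they have $N\le q^{(3k-2)/2}$, so the total chain length is $\Theta_k(Nq)=\Omega(N^{1+2/(3k-2)})$. Note also that the regime is the opposite of yours: about $n$ short chains of length roughly $n^{2/(3k-2)}$, rather than few chains of linear length.
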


This theorem is proved via yet another type of chain construction, namely \textit{line chains}, where we create a family $\cH$ of simple $H$-chains that is proper by placing one chain on each edge of a linear hypergraph $\cH$, that is, a hypergraph in which pairs of edges intersect in at most one vertex (we then link the chains as we did in Section \ref{sec:K5}). The hypergraph $\cH$ we use has  large \textit{girth}, meaning there is no small Berge cycle in $\cH$, which helps to avoid unwanted copies of $H$ minus an edge. For $\cH$ we use bipartite graphs of large girth due to Lazebnik, Ustimenko and Woldar \cite{lazebnik_connectivity_2004}. 

We remark that $(2,1)$-inseparability is not necessary for super-linear running time. Indeed, we saw in Theorem \ref{thm:wheel} that the wheel graph $W_k$, which 
has minimum degree three and hence is $(2,1)$-separable, has almost quadratic running time for any odd $k\geq 7$.  Yet, in \cite{FMSz2} we  give a partial converse to Theorem \ref{thm:connectivity} showing that $H$ being $(2,1)$-separable is also sufficient for the $H$-process to have linear running time, provided $H$ satisfies the extra condition that it \textit{self-percolates}, i.e., the $H$-process percolates starting from $H$ itself. 

    \begin{theorem}[\cite{FMSz2}]\label{thm:girth_construction}
    If $H$ is $(2,1)$-separable and $\final{H}_H=K_{v(H)}$, then
        \[
        M_H(n) = O(n).
        \]
    \end{theorem}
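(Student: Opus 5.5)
We would prove Theorem~\ref{thm:girth_construction} by combining the chain structure extracted from a long process, as in the discussion before Theorem~\ref{thm:wheel}, with a clique-growing argument in the spirit of the upper bound $M_{K_4}(n)=n-3$. Fix a separation of $H$: vertices $z_1,z_2$ and an edge $xy$ such that $H-\{z_1,z_2\}-xy$ is disconnected, with $x$ and $y$ in different components. Let $G_0$ be any $n$-vertex starting graph with running time $\tau$. Our aim is an invariant of the form: if the process has not stabilised by round $i$, then $G_{ci}$ contains a clique of size roughly $i/C$, for constants $c,C$ depending only on $H$. Since cliques have at most $n$ vertices, this gives $\tau=O(n)$.

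The first step uses self-percolation. Because $\final{H}_H=K_{v(H)}$, any copy of $H$ appearing in $G_i$ becomes a clique of size $v(H)$ within $s:=\tau_H(H)$ further rounds. The second step is the key growth lemma. Let $Q$ be a clique in $G_i$ with $|Q|\ge v(H)$, and let $e=uv$ be an edge infected at some round $j>i+s$ with $e\not\subseteq Q$. We would show that one endpoint of $e$, or an endpoint of an earlier edge in the chain leading to $e$, gets absorbed into a larger clique within $O(1)$ rounds.

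The mechanism for this absorption is the separation. Consider the copy $H_j$ completed by $e$. Chain-tracing back from $e$ gives edges $e_{j-1},e_{j-2},\dots$ infected in consecutive rounds. We would exploit $(2,1)$-separability as follows. Map the component of $H-\{z_1,z_2\}-xy$ containing $x$ into $Q$, using that $Q$ is a large clique so that any partial embedding into it is available. Map the other side onto existing structure, so that a copy of $H$ minus an edge is formed using only the two ``portal'' vertices and the single crossing edge. Consequently, any vertex $w$ outside $Q$ that is joined to $Q$ through a suitable pair of vertices plus an edge gets completely joined to $Q$ after a bounded number of rounds. The enlarged vertex set then becomes a clique again by self-percolation, in the manner of Lemma~\ref{lem:cliques}.

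Iterating, every $O(1)$ rounds in which the process is still active either enlarges the maximal clique containing the initial $H$-copy, or the active edges lie entirely inside the clique, which is impossible since a clique adds nothing new. This yields the linear bound.

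The main obstacle is the growth lemma. An active edge far from $Q$ need not interact with $Q$ at all, and different clique ``nuclei'' could grow in parallel, each contributing many rounds. We would first try to handle this by showing that each round of activity must be witnessed by an edge within bounded distance of the union of existing cliques; the chain of consecutive infected edges is connected, which supports this. We would then charge rounds to vertices newly absorbed into cliques, each vertex being absorbed at most once. Since cliques that share two vertices (or an edge plus a vertex) merge via the separation, the total charge is at most $n$ per nucleus, and we must argue that merging keeps the number of nuclei that are ever active bounded. If this charging fails, a fallback is to bound the number of rounds by $O(n)$ via a potential function equal to the sum of the clique sizes.
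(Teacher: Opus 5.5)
There is a genuine gap. The growth lemma carries the whole argument, but it is neither proved nor correctly formulated, and the ``parallel nuclei'' problem you raise is left open, so as written no linear bound follows.

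\textbf{What the separation actually gives.} $(2,1)$-separability does not let you absorb a single vertex $w$ attached to $Q$ by ``a pair of vertices plus an edge''. To embed $H-xy$ you need the whole $y$-side to sit inside a clique, i.e.\ the component $C_y$ of $H-\{z_1,z_2\}-xy$ together with $z_1,z_2$. The correct statement is a merging one: if two cliques, each of size at least $v(H)$, share two vertices $a,b$, then one round later their union is a clique. To see this, map $z_1,z_2$ to $a,b$. Put the component of $x$ and all other components into the first clique, with $x$ on any vertex outside the second. Put $C_y$ into the second clique, with $y$ on any vertex outside the first. The missing edge $xy$ then lands on an arbitrary cross pair.

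\textbf{The bookkeeping is in the wrong frame.} Tracking ``the maximal clique containing the initial $H$-copy'' and charging rounds to vertices absorbed by several nuclei does not work, because nothing ties late activity to early cliques. Moreover, iterating ``absorption within $O(1)$ rounds after $j>i+s$'' would let the delays accumulate.

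\textbf{The missing idea.} Ignore everything except one backward chain $(H_i,e_i)_{i\in[\tau]}$, traced from an edge infected in the last round, and grow $Q_t:=\bigcup_{j\le t}V(H_j)$. Set $s:=\tau_H(H)$. By induction, $Q_t$ is a clique in $G_{t+s+1}$, for three reasons:
\begin{itemize}
\item Since $H_{t+1}\subseteq G_{t+1}$ and $H$ self-percolates, $V(H_{t+1})$ is a clique in $G_{t+1+s}$ on its own schedule. This is what stops the delays from accumulating.
\item $V(H_{t+1})$ shares the endpoints of $e_t$ with $Q_t$.
\item The merging step then costs only one round.
\end{itemize}
Growth is then automatic. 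The edge $e_{t+s+2}$ is infected in round $t+s+2$, so it cannot lie inside $Q_t$, which is already a clique in $G_{t+s+1}$. Hence $Q_{t+s+2}\supsetneq Q_t$, and therefore $\tau\le (s+2)n$. No charging scheme or potential over several nuclei is needed, because whatever else happens in the process is irrelevant to this chain.
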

    
The proof adopts the growing clique idea used for $K_4$ as well as the existence of chains in all processes as used in Theorem \ref{thm:wheel} for the wheel. Although the condition $\final{H}_H=K_{v(H)}$ is quite restrictive, Theorem~\ref{thm:girth_construction} can still be used to generate a number of interesting examples. For example the graph $K_5^-$, that is, $K_5$ minus an edge, percolates on its own vertex set and it is $(2,1)$-separable, hence its maximum running time is at most linear. 
This is in contrast to $K_5$ which has (almost) quadratic running time \cite{balogh2019maximum}. We note that from the other direction Theorem~\ref{thm:23bound} implies $M_{K_5^-}(n)= \Theta (n)$. The following is an infinite sequence of dense examples, showing that the minimum degree condition $k/2 +1$ in Theorem~\ref{thm: min deg almost quad} is best possible to guarantee not only almost quadratic, but also super-linear running times.

    \begin{figure}[h]
    \centering
    \includegraphics[width=0.4\linewidth]{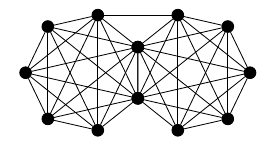}
    \caption{An illustration of the graph $H'_7$.  }
    \label{fig:connectivity}
    \end{figure}

    \begin{example} \label{ex:H'k}
    Let $k\geq 3$ and  $H'_k$ be the $(2k-2)$-vertex graph composed by `gluing together' two cliques of size $k$ along a singular edge $e$ and adding one more edge $e'$ between two non-adjacent vertices. (An illustration of $H'_7$ is given in Figure \ref{fig:connectivity}).    Then $M_{H'_k}(n)=\Theta(n)$. 
    \end{example}

    \begin{proof}
    The $H'_k$-process on $H'_k$ results in $K_{2k-2}$ after just one step of the process as any missing edge can play the role of $e'$ in a copy of $H'_k$. Note, moreover that $\kappa(H'_k-e')=2$ as removing the vertices of the edge $e$ disconnects $H'_k-e'$. Therefore $H'_k$ satisfies the assumptions of Theorem \ref{thm:girth_construction} and so  indeed $M_{H'_k}(n)=\Theta(n)$ where we used Theorem \ref{thm:23bound} for the lower bound on running time here. 
    \end{proof}

\subsection{A conjecture on tree-width}\label{sec:tree-width}
Given Theorem \ref{thm:trees}, another natural parameter to consider is the \emph{tree-width} $\tw(H)$ of the infection rule $H$. It is well known that $\tw (H) \geq \delta (H) \geq \kappa (H)$. Example \ref{ex:cliquependant edge} shows that unlike small minimum degree and small connectivity, small tree-width is \emph{not necessary} for small maximum running time, not even constant. Indeed, for the infection rule $K_k^+$ with maximum running time $3$ we have $\tw(K_k^+)=\tw(K_k)=k-1$. However, it might still be the case that small tree-width is {\em sufficient} to imply an upper bound on maximum running time. Indeed, trees are the unique graphs $H$ with $\tw(H)=1$ and thus Theorem \ref{thm:trees} is simply saying that any graph with tree-width 1 has constant running time.  Graphs of tree-width 2 can already have linear running time, by considering the complete bipartite graphs $K_{2,s}$ and noting that $\tw(K_{k,\ell})=\min\{k,\ell\}$. 

    \begin{prop}[\cite{FMSz2}]\label{prop:k2s_upper}
    For every $s\geq 3$, $M_{K_{2,s}}(n) = \Theta(n)$.
    \end{prop}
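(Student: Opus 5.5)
The lower bound is immediate from earlier results. $K_{2,s}$ is connected with $\delta(K_{2,s})=2$ and $\Delta(K_{2,s})=s\geq 3$, so Theorem~\ref{thm:23bound} gives $M_{K_{2,s}}(n)=\Omega(n)$.

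For the upper bound, Theorem~\ref{thm:girth_construction} does not apply. $K_{2,s}$ is certainly $(2,1)$-separable, but it does not self-percolate. One checks that no missing edge of $K_{2,s}$ can play the role of an edge in a copy of $K_{2,s}$ minus an edge, since the degree of the relevant vertices is too small. I would therefore argue directly, working with a local description of an infection step. In any graph $G_{t-1}$ of the process, a non-edge $uv$ is added at time $t$ if and only if there is a vertex $w\notin\{u,v\}$ with $wv\in E(G_{t-1})$ and $|N_{G_{t-1}}(u)\cap N_{G_{t-1}}(w)\setminus\{v\}|\geq s-1$. (There is also the symmetric case with the roles of $u$ and $v$ swapped.) Here $u,w$ play the two vertices of the small side and $v$ is one of the leaves.

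This suggests the \emph{twin relation}: declare $u\approx_t w$ if $u,w$ have at least $s$ common neighbours in $G_t$. This relation is monotone in $t$, since edges are never removed. If $u\approx_{t-1}w$, then in round $t$ every neighbour of $w$ becomes a neighbour of $u$ and vice versa (up to the bounded exceptions $u,w$ themselves). So $u$ and $w$ have identical neighbourhoods from time $t$ on. Moreover, if $u\approx w$ and $w\approx x$, then after one further round $u$ and $x$ share all of $N(w)$, so $u\approx x$. In particular, the twin relation closes up transitively in the same way as the triangle process of Example~\ref{ex:triangle}.

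The key steps, in order, are as follows.
\begin{enumerate}[(i)]
\item Show that every edge $uv$ added at time $t$ comes from a pair $u\approx_{t-1}w$ with $w\sim v$, up to the $s-1$ versus $s$ distinction, which I would handle by a slightly weakened threshold.
\item Show that if during some round no new $\approx$-pair appears and every $\approx$-component is already a $\approx$-clique with equal neighbourhoods, then the process has stabilised.
\item Bound the number of rounds in which the $\approx$-graph changes.
\end{enumerate}
For step (iii), the number of $\approx$-components can decrease at most $n-1$ times. Between merges, distances inside a component contract as in the $K_3$-process.

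The main obstacle is step (iii): turning this into a linear rather than $O(n\log n)$ bound. Naively, each merge could be followed by $\log n$ rounds of internal contraction. My first attempt would be an amortisation. After a merge, only the newly joined pieces need to propagate their neighbourhoods, and two already-contracted $\approx$-cliques joined by a single new $\approx$-pair become one $\approx$-clique within $O(1)$ rounds (by the transitivity observation applied to cliques). Hence the cost should be $O(1)$ rounds per merge event. Alternatively, following the proof strategy of Theorem~\ref{thm:girth_construction}, I would embed a $K_{2,s}$-chain in a process of length $\tau$ and grow a structure (here a $\approx$-clique with a common neighbourhood) that gains a vertex every constant number of rounds, giving $\tau=O(n)$.
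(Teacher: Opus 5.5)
Your lower bound is the paper's (Theorem~\ref{thm:23bound} with $\delta=2$, $\Delta=s\ge 3$). You are also right that Theorem~\ref{thm:girth_construction} does not apply, since $K_{2,s}$ is $K_{2,s}$-stable.

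\textbf{The gap: step (iii).} The upper bound is not proved. Step (iii) is the whole content of the linear bound, and the accounting you sketch for it is wrong, for two reasons.

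First, $u\approx_{t-1}w$ only yields $N_t(u)\supseteq N_{t-1}(w)\setminus\{u\}$. It does not make the neighbourhoods of $u$ and $w$ identical from time $t$ on, since each keeps absorbing the neighbourhoods of its other twins.

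Second, the partition into $\approx$-components can stay unchanged for $\Theta(\log n)$ non-stabilising rounds. Take $u_0,\dots,u_m$ and disjoint $(s-1)$-sets $S_1,\dots,S_m$, and join $u_i$ to $S_i\cup S_{i+1}$. Then $u_i\approx u_{i+1}$ from the start, so $U=\{u_0,\dots,u_m\}$ is one component. The $S$-vertices fuse into a single component within $O(\log s)$ rounds, and since the process keeps the graph bipartite, $U$ and $\bigcup_j S_j$ never merge. Yet each $N(u_i)$ is a run of consecutive blocks $S_j$ that grows only by a factor of about $3$ per round. So edges keep being added for $\Theta(\log n)$ further rounds with no merge at all. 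Throughout, $u_0\approx u_1$ while $N(u_0)\neq N(u_1)$ until the very end.

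Hence there is no ``$O(1)$ cost per merge event''. The picture of clean $\approx$-cliques sharing a common neighbourhood is not what the process maintains. The paper instead uses a vertex partition that provably coarsens within every $4$ consecutive non-stabilising rounds, which gives the linear bound at once. The $\approx$-component partition does not have this property.

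\textbf{What would rescue your route.} You need an amortisation that also pays for the internal contraction of components. The steps are:
\begin{enumerate}[(a)]
\item \emph{Use threshold exactly $s-1$.} For a non-edge $uv$ we have $v\notin N(u)$, so no weakened threshold is needed in (i).
\item \emph{Exact transitivity.} Show $u\approx_{t-1}w\approx_{t-1}x\Rightarrow u\approx_t x$. If $x\notin N(u)\cap N(w)$, that whole set stays a common neighbourhood of $u$ and $x$; the case $u\notin N(x)\cap N(w)$ is symmetric. Otherwise $uwx$ is a triangle and $w$ itself replaces the lost vertex. Consequently the $\approx$-diameters of components at least halve each round.
\item \emph{Potential.} Let $\Phi_t=\sum_C \log_2(\operatorname{diam}_{\approx_t}(C)+1)$. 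It starts at most $n$, increases by at most $\log_2 m\le m-1$ when $m$ components merge, and drops by an absolute constant in every round in which some non-clique component takes part in no merge.
\item \emph{Clique phase.} If all components are $\approx$-cliques at time $t_0$ and $\approx_{t_0+2}=\approx_{t_0}$, then at time $t_0+2$ all vertices of a class have the same neighbourhood up to themselves, so the process has stabilised.
\end{enumerate}
Together these give $O(n)$ rounds. None of this is in your proposal, and your alternative ``growing structure'' is too unspecific to assess.
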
  

As the minimum degree of $K_{2,s}$ is two and for $s\geq 3$ the maximum degree is at least three, the lower bound on $M_{K_{2,s}}(n)$ follows from Theorem \ref{thm:23bound}. The upper bound in turn establishes that Theorem \ref{thm:23bound} is best possible in the sense that no super-linear maximum running time could be guaranteed in general for graphs with $\delta (H) =2$ and $\Delta (H) \geq 3$. The upper bound for Proposition \ref{prop:k2s_upper} used a new idea of a partition of vertices which coarsens every 4 rounds in which the process does not stabilise. We believe that linear is the largest running time that an infection rule with tree-width two can have and pose the following conjecture. 

    \begin{conj} \label{conj:tw}
    Any graph $H$ with $\tw(H)=2$ has $M_H(n)=O(n)$.
    \end{conj}

Finally we remark that using bounds on tree-width to give effective upper bounds on running time only has the hope to work for very small values of tree-width, namely tree-width 1, as in Theorem \ref{thm:trees} and tree-width 2 as in Conjecture \ref{conj:tw}. Indeed, the wheel graph $W_7$ has tree-width 3 and almost quadratic running time by Theorem~\ref{thm:wheel}.

\section{Bipartite infection rules}\label{sec:bipartite-rules}

In the frustrating world of upper bounds our next one is refreshingly simple, yet meaningfully general. Recall that the  \textit{extremal number} (or\textit{ Tur\'an number}) of a graph, denoted $\ex(n,H)$, is the maximum number of edges of an $n$-vertex graph that does not contain $H$ as a subgraph. 

    \begin{prop}[\cite{FMSz3}] \label{prop:bipartite_extremalnumber}
    Let $H$ be a graph with at least two edges. Then 
        \[
        M_H(n) \leq 2\mathrm{ex}(n,H).
        \]
    \end{prop}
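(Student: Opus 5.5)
The plan is to extract from any slow $H$-process a large $H$-free subgraph, by taking one newly infected edge from every other round. Fix an $n$-vertex starting graph $G_0$ with running time $\tau:=\tau_H(G_0)$ and write $(G_i)_{i\ge 0}$ for the $H$-process. For each round $i\in[\tau]$ the set $G_i\setminus G_{i-1}$ is nonempty, so we may pick an edge $f_i\in G_i\setminus G_{i-1}$; call $i$ the \emph{infection time} of $f_i$. Let $T:=\{f_i : i\in[\tau],\ i \text{ odd}\}$. This is a set of $\lceil \tau/2\rceil$ distinct edges on $V(G_0)$. The whole proof reduces to showing that $T$ is $H$-free, since then $\lceil\tau/2\rceil\le \ex(n,H)$, and hence $\tau\le 2\ex(n,H)$.

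The key step is the following claim: any edge set containing at most one edge from each round, with no two edges from consecutive rounds, contains no copy of $H$. Suppose $T$ contained a copy $H'$ of $H$. The edges of $H'$ have pairwise distinct infection times, so $H'$ has a unique edge $e$ with the latest infection time, say $j$.

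Since $H$ has at least two edges, $H'-e$ contains some other edge of $T$. That edge has an odd infection time less than $j$, so $j\ge 3$. All edges of $H'-e$ have infection times at most $j-2$, because the times in $T$ are odd and distinct. Hence $H'-e\subseteq G_{j-2}$. The pair $e$ is not in $G_{j-2}$, and adding it to $G_{j-2}$ completes the copy $H'$ of $H$, so $n_H(G_{j-2}+e)>n_H(G_{j-2})$. By the definition of the process, $e\in G_{j-1}$. This contradicts $e\notin G_{j-1}$, which holds because $e$ was first infected in round $j$. So $T$ is $H$-free.

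I do not expect a serious obstacle here. The only delicate point is the role of the hypothesis $e(H)\ge 2$. It guarantees that the latest edge $e$ has a strictly earlier companion in $H'$, which forces $j\ge 3$, so that $G_{j-2}$ is defined and the "two rounds earlier" argument applies. The hypothesis is genuinely needed: for $H=K_2$ we have $\ex(n,K_2)=0$ while $M_{K_2}(n)=1$. One should also check that choosing the odd rounds rather than the even ones is what yields $\lceil\tau/2\rceil$ edges, giving $\tau\le 2\ex(n,H)$ rather than the weaker bound $2\ex(n,H)+1$.
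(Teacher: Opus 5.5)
Your proof is correct and is essentially the paper's argument. Both take one newly infected edge from each odd round and observe that the latest-infected edge $e$ of any copy of $H$ among them would already have been infected by round $j-1$, since the rest of the copy lies in $G_{j-2}$. Your treatment of $\lceil \tau/2\rceil$ and of why $e(H)\ge 2$ is needed (to ensure $j\ge 3$) is slightly more explicit than the paper's, which simply assumes $\tau\ge 2$.
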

    \begin{proof}
    Let $(G_i)_{i\geq 0}$ be an $H$-process with $\tau=\tau_H(G_0)=M_H(n)\geq 2$. For $1\leq i\leq \tau$, let $e_i\in G_i\setminus G_{i-1}$ be an edge added at time $i$. Now consider the graph $G'$ with  \[E(G')=\{e_i:i\in [\tau], i=1 \mbox{ mod }2\}.\]
    We claim that $G'$ has no copy of $H$. Indeed, if there is some copy $F$ of $H$ in $G'$ and $i_*\in [\tau]$ is the maximal index such that $e_{i_*}\in E(F)$, then $F-e_{i_*}\subseteq G_{i_*-2}$ and so $e_{i_*}$ would be added by time $i_*-1$, contradicting the definition of $e_{i_*}$. Thus, $\tau/2\leq e(G')\leq \ex(n,H)$ as required.  
    \end{proof}

As a consequence of Proposition \ref{prop:bipartite_extremalnumber} and the classical K\H{o}v\'ari-S\'os-Tur\'an theorem \cite{khovari_problem_1954}, we obtain that for bipartite graphs $H$, the maximum running time $M_H(n)$ is polynomially separated from quadratic. 

    \begin{cor}\label{cor:extremal_bipartite}
    Let $H$ be a bipartite graph such that the two partite sets of $H$ have size $r$ and $s$, respectively, where $1\leq r\leq s$. Then
        \[
        M_H(n) = O(n^{2-1/r}).
        \]
    \end{cor}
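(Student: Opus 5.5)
The plan is to combine Proposition~\ref{prop:bipartite_extremalnumber} with the K\H{o}v\'ari--S\'os--Tur\'an theorem. First, if $H$ has at most one edge, the statement is trivial. If $e(H)=0$, no edge is ever added and $M_H(n)=0$. If $e(H)=1$, all edges get infected in round one and $M_H(n)\le 1$. Both are $O(n^{2-1/r})$ since $r\ge 1$. So we may assume $H$ has at least two edges, and Proposition~\ref{prop:bipartite_extremalnumber} gives
\[M_H(n)\le 2\,\ex(n,H).\]

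Next I bound $\ex(n,H)$. Since $H$ is bipartite with parts of sizes $r\le s$, it is a subgraph of $K_{r,s}$: embed each part of $H$ into the corresponding part of $K_{r,s}$. Any graph containing no copy of $H$ therefore contains no copy of $K_{r,s}$, so $\ex(n,H)\le \ex(n,K_{r,s})$.

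The K\H{o}v\'ari--S\'os--Tur\'an theorem states that
\[\ex(n,K_{r,s})\le \tfrac12 (s-1)^{1/r} n^{2-1/r} + \tfrac12 (r-1)n = O(n^{2-1/r}),\]
with constants depending only on $r$ and $s$. For $r=1$ the bound is linear, which is consistent since then $H$ is a star forest. Chaining the three inequalities gives $M_H(n)=O(n^{2-1/r})$.

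There is no real obstacle. The only point needing care is the degenerate case $e(H)\le 1$, where the hypothesis of Proposition~\ref{prop:bipartite_extremalnumber} fails, and that case is handled by the direct observation above. If one insists on quoting the K\H{o}v\'ari--S\'os--Tur\'an bound with $r$ as the smaller side, that is exactly the convention $r\le s$ in the statement.
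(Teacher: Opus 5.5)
Your proof is correct and follows the paper's route exactly: it applies Proposition~\ref{prop:bipartite_extremalnumber} and then bounds $\ex(n,H)\le\ex(n,K_{r,s})=O(n^{2-1/r})$ by the K\H{o}v\'ari--S\'os--Tur\'an theorem, using $H\subseteq K_{r,s}$. Your separate handling of the degenerate case $e(H)\le 1$, where the proposition's hypothesis fails, is a small point the paper leaves implicit.
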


At first sight, the upper bound of Proposition~\ref{prop:bipartite_extremalnumber} seems awfully generous. For it to be tight, we should be able to construct a pair of $H$-free graphs that are both optimally dense up to a constant factor, such that their edges appear one by one in the even and odd rounds of an $H$-process, respectively. Dense $H$-free graphs are notoriously hard to come by for bipartite $H$ \cite{furedi2013history}, so good luck  locating a magical starting graph producing two of them through the $H$-process! 

Indeed, for many graphs, like trees and unicyclic graphs, we already know from Theorems~\ref{thm:trees} and \ref{thm:cycles} that this upper bound is not tight, by far in fact. 
For complete bipartite graphs with a part of size two we can also do much better, as their maximum running time is linear by Proposition~\ref{prop:k2s_upper}, while the Tur\'an number is of  order $n^{3/2}$. Before completely disrespecting Proposition~\ref{prop:bipartite_extremalnumber} though, we should reflect on what happens for non-bipartite graphs. The generous upper bound {\em is} tight \cite{balogh2019maximum} up to a constant factor for cliques of order at least $6$, and in fact also for ``most'' graphs by Theorem~\ref{thm:random}. So maybe we are also in for a surprise for large enough bipartite $H$?

\subsection{Small graphs}
First we settle that the maximum running time of $K_{3,3}$ is significantly higher than that of $K_{2,s}$. This will follow from  an analogue of Theorem~\ref{thm: min deg almost quad}, that provides a general lower bound for  bipartite infection rules with large proportional minimum degree on both sides. 

    \begin{theorem}[\cite{FMSz3}] \label{thm:bip dense}
    Let $3\leq r\leq s$ and suppose $H$ is a bipartite graph with parts $X,Y$ with $|X|=r$, $|Y|=s$ and such that $d(x)\geq \frac{s}{2}+1$ for all $x\in X$ and $d(y)\geq \tfrac{r}{2}+1$ for all $y\in Y$. Then \[M_H(n)\geq n^{3/2-o(1)}.\]
    \end{theorem}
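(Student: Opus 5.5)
We will prove the bound with a chain construction, following the template of Section~\ref{sec:K5} and the bipartite-adapted version of Lemma~\ref{lem:chain lower}. The plan is to superimpose many simple $H$-chains on a common vertex set and then connect them with linking chains. Since $H$ is bipartite, the host graph should also be bipartite, with parts $P$ and $Q$ of size $\Theta(n)$. Each copy $H_i$ will place $X$ into $P$ and $Y$ into $Q$. The linking edges $e_i$ must then be edges between $P$ and $Q$; consecutive copies share exactly one vertex on each side.

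\textbf{Local condition.} We first need the analogue of $(2,1)$-inseparability for a single simple $H$-chain. Consider a copy $F$ of $H-e$ that lies inside one chain and meets two consecutive copies. Such an $F$ is separated by the two endpoints of some $e_i$, together with possibly one deleted edge. This is impossible once $H-e$ cannot be disconnected by removing at most two vertices and one edge. That follows from the degree conditions $d(x)\ge s/2+1$ and $d(y)\ge r/2+1$. Any two vertices on the same side have at least two common neighbours. Also, every vertex has at least $2$ neighbours outside any deleted pair. So $F$ is forced to lie within a single $V(H_i)$, and the chain is proper locally.

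\textbf{Global structure.} The exponent $3/2$ tells us the host graph should look like an extremal $C_4$-free or large-girth bipartite graph. We will use the incidence (line-chain) idea of Theorem~\ref{thm:connectivity}. Take a bipartite linear structure $\mathcal L$ on $P\cup Q$ with $n^{3/2-o(1)}$ edges in which two "lines" share at most one point. Concretely, use a point–line style incidence structure over $\mathbb{Z}_p$ built from a Behrend-type dilation set as in Theorem~\ref{lem:beh stre}; this will cost the $o(1)$. Along each line we lay a simple $H$-chain of length proportional to the line's size. The lines are chosen so that the total length of all chains is $n^{3/2-o(1)}$.

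Each chain segment is a dilation chain: its vertices are $w_{a(\cdot)}$ on both sides, with consecutive copies advancing additively. The segments are then concatenated with constant-length linking chains on fresh vertices, exactly as in Section~\ref{sec:K5}. The edges of distinct segments are pairwise disjoint, because each edge determines a difference relation $x-y=\alpha a$ and the dilation set has no nontrivial solutions to these small equations. This gives the weakened condition~\ref{(dag')}.

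\textbf{Main obstacle.} The hard step is excluding crossing unwanted copies of $H-e$ that use edges from several segments. For $K_5$ we reduced this via Fact~\ref{obs: K5 col} to non-monochromatic triangles. Bipartite graphs have no triangles, so the reduction must be to non-monochromatic even cycles, for instance $C_4$'s or $C_6$'s made of two or three monochromatic paths. This is a bipartite variant of the Behrendian property in Definition~\ref{def:behre}.

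The first thing to try is to show that the degree conditions $d(x)\ge s/2+1$ and $d(y)\ge r/2+1$ make $H-e$ "bipartite Behrendian" in this sense. The key fact is that any two vertices on one side share at least two common neighbours, which produces many $4$-cycles covering any colour change. Each such cycle translates into a linear equation with at most three unknowns from $A$ (or two, via the linear structure), and Theorem~\ref{lem:beh stre} rules it out. The density $n^{1/2}$ of segments per vertex must be balanced so that these cycles cannot close. Making this balance work is where I expect the technical difficulty, and the source of the exponent $3/2$, to lie. Once crossing copies are excluded, the analogue of Lemma~\ref{lem:chain lower} gives running time at least the total chain length $n^{3/2-o(1)}$.
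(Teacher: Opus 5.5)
There is a genuine gap at exactly the step you flag as the main obstacle, and the tool you propose for it cannot work. In a bipartite host the shortest cycles are $4$-cycles. A $4$-cycle whose four edges come from four different dilation chains yields, via the analogue of Observation~\ref{obs:edges dil}, a relation $\alpha_1a_1-\alpha_2a_2+\alpha_3a_3-\alpha_4a_4\equiv 0 \pmod p$ in \emph{four} unknowns from $A$. Theorem~\ref{lem:beh stre} only controls equations in at most three unknowns. The ``bipartite Behrendian'' property you hope to extract from the degree conditions (a non-monochromatic cycle that is a union of two or three monochromatic paths) fails for every triangle-free graph: give each edge its own colour. Even the single equation $a_1+a_3=a_2+a_4$ can only be avoided by a Sidon set, so $|A|=O(\sqrt p)$, whereas a Behrend set of size $p^{1-o(1)}$ has many solutions. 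A sanity check shows the plan cannot be rescued inside the Behrend framework. $K_{3,3}$ satisfies the hypotheses. If three-variable conditions sufficed to exclude crossing copies, you could use all $p^{1-o(1)}$ dilations with chains of length $\Theta(p)$ and obtain $n^{2-o(1)}$. That contradicts $M_{K_{3,3}}(n)\le 2\ex(n,K_{3,3})=O(n^{5/3})$ from Proposition~\ref{prop:bipartite_extremalnumber} and Corollary~\ref{cor:extremal_bipartite}. Your extra ``point--line incidence'' layer is not specified precisely enough to see how it would prevent these $4$-cycles, and the exponent $3/2$ is asserted rather than derived.

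The missing idea, which is what the paper uses, is to keep bipartite dilation chains of length $\Theta(p)$ but take the dilation set to be a \emph{generalised Sidon set}. This is a set of size $p^{1/2-o(1)}$ with no non-trivial solutions to the bounded-coefficient linear equations (in four or more unknowns) produced by short non-monochromatic even cycles. Sets with exactly the needed properties had to be built by adapting Ruzsa's methods. The degree conditions are what let one reduce a crossing copy of $H-e$ to such a short cycle. The exponent $3/2$ is then simply $p\cdot p^{1/2-o(1)}$, i.e.\ the Sidon barrier, not the outcome of balancing segment densities.

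Separately, your local claim is false as stated. $K_{3,s}$ satisfies the hypotheses but has vertices of degree $3$, so it is $(2,1)$-separable. Your version with an additional deleted edge fails even for $K_{3,3}$ with one vertex removed from each side, since what remains is a $4$-cycle, which two deleted edges can disconnect. Drop the extra edge and use that the cut pair $e_i$ has one endpoint in each part. What you need, and what the degree conditions do give, is that $H-e-x-y$ is connected for every edge $e$, every $x\in X$ and every $y\in Y$.
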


The bound on the minimum degrees is best possible. Indeed, for an arbitrary $k$ divisible by $4$ consider two copies of $K_{k/4,k/4}$ connected with a single edge. This graph is $(2,1)$-separable and  self-percolating and hence has at most linear running time by Theorem~\ref{thm:connectivity}. The minimum degree on both sides of the bipartition is $k/4$, exactly half of the sides. 

The proof of Theorem \ref{thm:bip dense} follows a bipartite adaptation of our dilated chain construction  and employs generalisations of Sidon sets as the set of dilations.  In fact, arithmetic sets with the exact properties and density that we needed were not known to exist and so in \cite{FMSz3} we also adapted ideas of Ruzsa \cite{ruzsa_solving_1993} to construct the necessary sets. Observe that Theorem \ref{thm:bip dense} is applicable for both $K_{3,3}$ and the cube graph $Q_3$. Unfortunately the known upper bounds of the respective Tur\'an numbers do not provide matching exponents.
For the cube $Q_3$ we can remove the $o(1)$ from the exponent of the lower bound in Theorem~\ref{thm:bip dense} using a bipartite version of the ladder chain construction with Sidon sets.

    \begin{theorem}[\cite{FMSz3}]\label{thm:cube}
    For the running time of the cube $Q_3$ we have
  \[\Omega(n^{3/2})\leq M_{Q_3}(n)\leq O(n^{8/5})
  .\] 
    \end{theorem}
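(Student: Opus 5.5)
The upper bound should come from the extremal-number argument of Proposition~\ref{prop:bipartite_extremalnumber}, sharpened using the structure of $Q_3$. The lower bound should come from a bipartite ladder chain construction with Sidon-type slopes.

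\emph{Lower bound.} I would build a family of simple $Q_3$-chains on a common vertex set, each of linear length, and link them with the linking chains from Section~\ref{sec:K5} (in their general form from \cite{FMSz3}). Then I would apply the analogue of Lemma~\ref{lem:chain lower} for proper chains. Concretely, split $V(Q_3)$ into two $4$-cycles $L$ and $R$, joined by a perfect matching. On the left side place a sequence of $4$-cycles in which consecutive copies share a single vertex, and do the same on the right. For a slope $a$, the simple chain $(H^a_i,e^a_i)$ matches the $i$-th left cycle with the $(i+a)$-th right cycle via the matching edges, with the $e^a_i$ chosen as matching edges that alternate along the chain.

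The first step is local properness. No copy of $Q_3-e$ inside one ladder chain can lie outside a single $V(H_i)$. I would check this directly, using that $Q_3-e$ is $2$-connected with minimum degree at least $2$ and that consecutive copies meet only in a cut of bounded size.

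The second step rules out crossing copies. Any copy $F$ of $Q_3-e$ whose edges use two slopes $a\neq a'$ contains a $4$-cycle or $6$-cycle alternating between left/right paths and matching edges of different slopes. Such a cycle yields a linear relation $a-a'+b-b'=O(1)$ (or a similar relation with three slopes) with small coefficients. If we choose the slope set $A\subseteq[n]$ to be a Sidon-type set of size $\Theta(\sqrt n)$ that is also spaced (for example, all slopes multiples of a large constant), these relations have only trivial solutions. This gives $\Theta(\sqrt n)$ chains of length $\Theta(n)$, so $\tau=\Omega(n^{3/2})$. I expect this crossing analysis to be the main obstacle. Every cycle of $Q_3-e$ must be shown to force such an equation, and the definition of Sidon set may need strengthening, for instance to $B_2$ sets with bounded multipliers.

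\emph{Upper bound.} Take edges $e_i$ added at time $i$, and following the proof of Proposition~\ref{prop:bipartite_extremalnumber} keep every other one. The resulting graph $G'$ has at least $\tau/2$ edges and is $Q_3$-free, but this only gives $\ex(n,Q_3)=O(n^{8/5})$, which is exactly the claimed bound. So the upper bound follows immediately from Proposition~\ref{prop:bipartite_extremalnumber} together with Erd\H{o}s's bound $\ex(n,Q_3)=O(n^{8/5})$ (Pinchasi--Sharir, or Füredi's proof), which I would cite.
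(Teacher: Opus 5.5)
Your architecture matches the paper's. The upper bound is exactly Proposition~\ref{prop:bipartite_extremalnumber} combined with $\ex(n,Q_3)=O(n^{8/5})$, which the paper attributes to Erd\H{o}s and Simonovits. The lower bound is a ladder chain on two necklaces of $4$-cycles, with a spaced Sidon set of $\Theta(\sqrt n)$ slopes, and the chains are then linked.

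However, neither verification step for the lower bound is established as written. Local properness does not follow from ``$Q_3-e$ is $2$-connected with minimum degree $2$ and consecutive copies meet in a cut of bounded size''. That reasoning applies verbatim to $K_4$, yet simple $K_4$-chains do contain copies of $K_4-e$ straddling $e_i$. Since $Q_3$ is $(2,1)$-separable, you have to use its structure. For $e=uv$, the only $2$-vertex cuts of $Q_3-e$ are $N(u)\setminus\{v\}$ and $N(v)\setminus\{u\}$, and each isolates a single vertex adjacent to both cut vertices. The separating pair $\{x_i,y_{i+a}\}$ spans the edge $e_i$ of the cubes $H^a_i$ and $H^a_{i+1}$. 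So a common neighbour on either side would create a triangle in $Q_3$.

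The crossing step you leave open, and the direction you suggest would be fatal. A $6$-cycle of cross edges gives $a_1-a_2+a_3-a_4+a_5-a_6=O(1)$, which is a $B_3$-type condition, and such sets have only about $n^{1/3}$ elements. The missing idea is that $4$-cycles suffice.

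First, make the matchings consistent: the shared vertex $x_i$ is matched to $y_{i+a}$ in both $H^a_i$ and $H^a_{i+1}$, and the two shared vertices of each $4$-cycle are opposite. Then every vertex has at most one cross edge of each slope.

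Second, index necklace vertices by position, placing $x_i$ at $i+\tfrac12$. A slope-$a$ cross edge then shifts position by exactly $a$, and a side edge by at most $\tfrac12$.

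Now take a copy $F$ of $Q_3-e$ and its four $4$-faces.
\begin{itemize}
\item A face with four cross edges gives $a_1+a_3=a_2+a_4$ exactly. The Sidon property then forces two adjacent edges to have the same slope, which is impossible.
\item A face with two cross edges gives $|a-a'|\le 1$, so spacing forces $a=a'$. This is impossible if the two edges are adjacent.
\end{itemize}
Hence every face is either cross-free or carries exactly two opposite cross edges of a common slope.

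A short case check over the $L/R$ splits of $V(F)$ shows only the three splits of $Q_3$ into two opposite faces survive. Putting all of $F$ on one side is impossible, since the necklaces are cacti. In each surviving split, the faces carrying cross edges chain all cross edges together. So $F$ uses a single slope, lies in one simple chain, and the corrected local argument finishes the proof.
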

    
Here the upper bound follows from Proposition~\ref{prop:bipartite_extremalnumber} and the best known upper bounds on the Tur\'an number of the cube, due to Erd\H{o}s and Simonovits \cite{erdos_extremal_1970}. 
Actually, for $\ex(n,Q_3)$ the best known bounds are the same as for $M_{Q_3}(n)$. 

The Tur\'an number of $K_{3,3}$ is known asymptotically and the exponent is $5/3$ \cite{furedi2013history}, matching the K\H ovari-S\'os-Tur\'an upper bound. 
While the exponent $3/2$ in the lower bound still falls short of the upper bound of Corollary \ref{cor:extremal_bipartite}, 
both for $H=K_{3,3}$ and $Q_3$ we believe it to be essentially tight. 
    
    \begin{conj} For $K_{3,3}$ and $Q_3$ we have maximum running times
    \[   M_{K_{3,3}}(n) = n^{3/2 + o(1)} \qquad \mbox{ and } \qquad
    M_{Q_3}(n)  = n^{3/2 + o(1)}.\]
    \end{conj}
    
Both for $K_{3,3}$ and $Q_3$, it would even be extremely interesting to improve on any of the known bounds, even if only just by a log-factor. 

\begin{rem}
One could use the slow $K_{3,3}$-process in the simulation construction method (Section \ref{sec:sim}) to create graphs with minimum degree $1$, maximum degree $4$ and maximum running time $n^{3/2}$. 
In the proof of Theorem~\ref{thm:counterexample} one simply replaces the benchmark graph ($C_6$ plus a chord) with $K_{3,3}$. 
 The obtained infection rule has a single vertex of degree $4$. In order to get down to maximum degree $3$, one needs to delete an edge from $K_{3,3}$; unfortunately this reduces the maximum running time to linear.
One can also aim to create a larger almost $3$-regular bipartite infection rule randomly and hope that it has a longer running time. We are not convinced though that such a graph would have super-linear running time either.
\end{rem}

\subsection{Large complete bipartite graphs}
 For larger complete bipartite graphs, using bipartite dilation chains together with a probabilistic alteration tweak, we can improve the lower bound  significantly.

    \begin{theorem}[\cite{FMSz3}] \label{thm:complete bip}
    For $3\leq r\leq s$, the maximum running time $M_{K_{r,s}}(n)$ is bounded from below by
        \begin{equation} \label{eq:comp bip lower}
        M_{K_{r,s}}(n) \geq n^{2-\tfrac{1}{r} - \tfrac{1}{s-1}-o(1)}.
        \end{equation}
    \end{theorem}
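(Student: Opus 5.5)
We will prove the bound with a chain construction, following the template of Section~\ref{sec:K5}. We first build many linearly long simple $K_{r,s}$-chains on one common vertex set, as bipartite dilation chains. Next we run a random alteration to destroy all unwanted copies of $K_{r,s}$ minus an edge. Finally we link the surviving segments with short linking chains and apply the proper-chain analogue of Lemma~\ref{lem:chain lower}.

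\textbf{The chains.} Take two sides $X=\{x_i\}$ and $Y=\{y_i\}$, each indexed by $\Z_p$. For a dilation $a$, the chain $(H^a_i,f^a_i)_i$ consists of copies of $K_{r,s}$ laid out along the additive order. The $i$-th copy uses $x_{a(ci+\ell)}$ for $\ell$ in a window of length $r$ and $y_{a(ci+m)}$ for $m$ in a window of length $s$. Consecutive copies overlap only in the edge $f^a_i$. Every edge of the $a$-chain then joins $x_u$ and $y_v$ with $u-v\equiv \alpha a \pmod p$ for some bounded $\alpha$, which is the analogue of Observation~\ref{obs:edges dil}.

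Local unwanted copies must be ruled out as in Observation~\ref{obs:insep simple}. This requires a bipartite analogue of $(2,1)$-inseparability, namely that $K_{r,s}-e$ cannot be split across the two vertices of a single edge $f^a_i$. This holds because $r\ge3$: removing one vertex on each side and one further edge leaves $K_{r,s}$ connected.

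\textbf{Crossing copies and the density count.} We choose each dilation independently with probability $q/p$, where $q$ is the target number of dilations, or alternatively choose $q$ dilations from a Sidon-type set so that cycles of length $4$ mixing colours are avoided. We then count crossing copies of $K_{r,s}-e$, meaning copies that use edges from at least two chains. The union of the chains has about $pq$ edges on $\Theta(p)$ vertices, so its density is $d\approx q/p$.

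A crossing copy of $K_{r,s}-e$ behaves like a copy in a random graph of density $d$: there are about $p^{r+s}d^{rs-1}$ of them. We delete every chain segment (or subchain) that contains an edge of an unwanted copy. This is harmless as long as the number of unwanted copies is at most a small fraction of the roughly $pq$ chain edges. The critical density at which this fails is governed by the densest subgraph $F\subseteq K_{r,s}-e$ that can be a crossing configuration. The relevant configurations appear to be $K_{r,s-1}$-type substructures and single-vertex extensions, which balance to give
\[
q\approx p^{1-1/r-1/(s-1)}.
\]
The total length is then $pq=n^{2-1/r-1/(s-1)-o(1)}$, and the $o(1)$ absorbs the Sidon-type and prime losses. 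The deletion must keep the segments long, so we delete only a short window around each bad edge and then relink. Each deletion costs $O(1)$ linking vertices, which stays within the $O(n)$ budget. The relinking is the generic linking procedure already used in Section~\ref{sec:K5}.

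\textbf{Main obstacle.} The hard step is the first-moment computation showing that crossing copies are rare at this $q$. The arithmetic structure is not truly random: edges of a single chain are highly correlated, since each chain is locally a union of complete bipartite blocks. So we must classify a crossing copy by how its edges split among chains, with each chain contributing a union of pieces of blocks. For each split type we must bound the number of solutions to the resulting system of linear congruences in the dilations. A Sidon-type condition kills mixed $4$-cycles, which should force the pieces from different chains to meet in a sparse pattern, so that the count reduces to a random-graph-like count of $F$ in density $q/p$. Pinning down exactly which $F$ is extremal, and checking that the exponent $1/r+1/(s-1)$ comes out of the balance, is where I expect most of the work to lie.
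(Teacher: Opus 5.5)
Your outline (bipartite dilation chains, random alteration, relinking) is consistent with the paper's one-line description. However, the step that actually produces the exponent is missing, and both mechanisms you offer for it fail exactly where the theorem has content.

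Theorem~\ref{thm:bip dense} already gives $n^{3/2-o(1)}$ for every $K_{r,s}$ with $3\le r\le s$. So the statement is only new when $\frac1r+\frac1{s-1}<\frac12$, that is, when the number of dilations $q=p^{1-1/r-1/(s-1)}$ is polynomially larger than $\sqrt p$.

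\textbf{Sidon-type sets.} These have size $O(\sqrt p)$, so this option cannot reach the target. At $q\gg\sqrt p$, rainbow $4$-cycles across chains are unavoidable.

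\textbf{Uniformly random sets.} These fail for a different reason. Take a block $B$ of chain $a$ and a vertex $y\notin B$ joined to two vertices of $B$ through chains $a_1,a_2$. This is the three-variable equation $\alpha_1a_1-\alpha_2a_2=\beta a$. A random set of size $q$ has about $q^3/p$ solutions, each typically realised at order $p$ positions. Since $B$ supplies a $K_{3,s-1}$ for free, for $r=3$ you get order $q^3$ unwanted copies of $K_{3,s}-e$.
\begin{itemize}
\item For $q\gg\sqrt p$ this exceeds the $\Theta(pq)$ chain edges.
\item It exceeds by even more the $O(n)$ unwanted copies your relinking budget can afford. Each deletion costs $O(1)$ new vertices, so ``a small fraction of $pq$'' is the wrong criterion.
\item For general $r$, the same configuration with $y$ joined to $r-1$ vertices of $B$ gives about $p^{3-r}q^r$ copies. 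This forces $q\ll p^{1-1/(r-1)}$, which is below the target once $s-1>r(r-1)$.
\end{itemize}
So an additional arithmetic ingredient is needed. For example, first restrict the dilations to a Behrend/Ruzsa-type set with no non-trivial solutions to bounded-coefficient equations in at most three variables (as in Theorem~\ref{lem:beh stre}), and only then thin randomly. This forces crossing copies into an essentially rainbow colour pattern that can be counted.

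\textbf{The exponent is not derived.} Even granting such an ingredient, the exponent is asserted rather than derived, and your own heuristic does not give it. With $p^{r+s}(q/p)^{rs-1}$ unwanted copies and your criterion that this be $\ll pq$, one gets $q\approx p^{(rs-r-s)/(rs-2)}$, which is a different (larger) exponent. The value $q=p^{1-1/r-1/(s-1)}$ is simply where the expected number of copies of $K_{r,s-1}$ in a random bipartite graph of density $q/p$ is constant. Nothing in the proposal explains why crossing $K_{r,s-1}$'s, rather than crossing copies of $K_{r,s}-e$, should be the obstruction.

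The ``densest subgraph'' reasoning is also not the relevant one for an alteration argument, which only needs a first-moment bound on the full bad configurations. What breaks the random-graph analogy is the block structure, as the example above shows. The real content of the proof is the following, and the proposal does not supply it:
\begin{itemize}
\item classifying the colour patterns of crossing copies of $K_{r,s}-e$;
\item counting solutions of the resulting linear systems in the chosen dilation set;
\item comparing that count with the correct deletion budget.
\end{itemize}
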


The bound in Theorem \ref{thm:complete bip} becomes stronger than that of Theorem \ref{thm:bip dense} when $s>\tfrac{3r-2}{r-2}$, in particular for $K_{r,r}$ with $r\geq 5$. With $r$ fixed and $s$ growing, the bound of Theorem \ref{thm:complete bip} approaches the upper bound given by Corollary \ref{cor:extremal_bipartite}.

Recall that before this section we have only encountered infection rules with maximum running times constant, logarithmic, linear and (almost) quadratic. What other running times are possible?
Using Theorem \ref{thm:complete bip}  together with Proposition~\ref{prop:bipartite_extremalnumber}, we can establish an infinite sequence of infection rules $H$, each with a distinct maximum running time strictly between $n^{3/2}$ and quadratic.

    \begin{cor}[\cite{FMSz3}] \label{cor:compl bip lower}
  For every $3\leq r\in \N$ there exists $s_r\in \N$ such that
  \[\omega\left(n^{2-1/({r-1})}\right) \leq  M_{K_{r,s_r}}(n)\leq O(n^{2-1/r}).\]
    \end{cor}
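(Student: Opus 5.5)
The plan is to combine the lower bound of Theorem~\ref{thm:complete bip} with the upper bound of Proposition~\ref{prop:bipartite_extremalnumber}.

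For the upper bound, Corollary~\ref{cor:extremal_bipartite} applies directly to $H=K_{r,s_r}$ with parts of sizes $r\leq s_r$. It gives $M_{K_{r,s_r}}(n)\leq 2\ex(n,K_{r,s_r})=O(n^{2-1/r})$ for every choice of $s_r\geq r$. So the only real task is to choose $s_r$ so that the lower bound beats $n^{2-1/(r-1)}$ by a polynomial factor.

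For the lower bound, apply Theorem~\ref{thm:complete bip} with $s=s_r$:
\[
M_{K_{r,s_r}}(n)\geq n^{2-\frac1r-\frac1{s_r-1}-o(1)}.
\]
We want the exponent $2-\frac1r-\frac1{s_r-1}$ to exceed $2-\frac1{r-1}$ strictly. This is equivalent to
\[
\frac1{s_r-1}<\frac1{r-1}-\frac1r=\frac1{r(r-1)},
\]
that is, $s_r-1>r(r-1)$. So we choose $s_r:=r(r-1)+2=r^2-r+2$, which satisfies $s_r\geq r$ as Theorem~\ref{thm:complete bip} requires.

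With this choice the exponent is $2-\frac1{r-1}+\varepsilon_r$, where $\varepsilon_r:=\frac1{r(r-1)}-\frac1{r^2-r+1}>0$ is a constant depending only on $r$. Since the $o(1)$ term in the exponent tends to $0$, for large $n$ it is below $\varepsilon_r/2$. Hence
\[
M_{K_{r,s_r}}(n)\geq n^{2-\frac1{r-1}+\varepsilon_r/2}=\omega\bigl(n^{2-1/(r-1)}\bigr).
\]

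The only point needing care is absorbing the $o(1)$ in the exponent, which works because the gap $\varepsilon_r$ is a fixed positive constant. There is no genuine obstacle here: all the difficulty sits inside Theorem~\ref{thm:complete bip}, which we assume.

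As a side remark, since $\frac1r<\frac1{r-1}$, the windows $\bigl(2-\frac1{r-1},\,2-\frac1r\bigr]$ for successive $r$ are disjoint. The infection rules $K_{r,s_r}$ therefore have pairwise distinct maximum running times, all strictly between $n^{3/2}$ and quadratic, which is the intended consequence.
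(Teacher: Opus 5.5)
Your proposal is correct and follows the paper's route: the upper bound is Proposition~\ref{prop:bipartite_extremalnumber} combined with the K\H{o}v\'ari--S\'os--Tur\'an bound (Corollary~\ref{cor:extremal_bipartite}), and the lower bound is Theorem~\ref{thm:complete bip} with $s_r$ chosen so that $\frac{1}{s_r-1}<\frac{1}{r(r-1)}$. Your choice $s_r=r^2-r+2$ works, and absorbing the $o(1)$ term into the fixed positive gap $\varepsilon_r$ is exactly the care needed.
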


We expect that there are in fact infinitely many exponents $\alpha\in \mathbb{Q}$  possible such that $M_H(n)=\Theta(n^\alpha)$. However due to our lack of understanding of $M_H(n)$, we do not know that the graphs in Corollary \ref{cor:compl bip lower} offer such examples. In fact we do not even know that maximum running times are approximately polynomial. 
\begin{conj} \label{conj:lim}
    For every infection rule $H$, we have that the limit \[\lim_{n\rightarrow \infty} \frac{\log M_H(n)}{\log n}\] exists.
\end{conj}

In general, it is unclear which of our bounds are closer to the truth for bipartite graphs and the picture is complicated by the fact that the extremal number is not known for many bipartite graphs $H$ of interest, see for example \cite{furedi2013history}. Despite the ample warning signs involving the general case for cliques, we gamble that Proposition \ref{prop:bipartite_extremalnumber} is never tight for bipartite graphs. 

    \begin{conj} \label{conj:bip upper}
    For all bipartite graphs $H$, we have that 
   \[M_H(n)=o(\ex(n,H)).\]
    \end{conj}
It could well be that a stronger conjecture is true and there is always a polynomial separation between $M_H(n)$ and $\ex(n,H)$ for all bipartite graphs $H$. In a different direction, for general $H$ we believe the following is true. 
    \begin{conj}\label{con:H-e} For all infection rules $H$ with at least 3 edges, we have
    $$M_H(n) \leq O\left(\min_e \ex(n,H-e)\right).$$ 
    \end{conj}
This conjecture formulates the limitations intrinsic in all our chain constructions, where the edges $e_i$ form a graph free of any copy of $H$ minus an edge. It can also be considered a challenge to devise novel constructions beyond the current ideas. If one believes the conjecture of Erd\H os \cite{erdos1967some} about the Tur\'an number of $r$-degenerate graphs then Conjecture \ref{con:H-e} implies for the symmetric complete bipartite graph that $M_{K_{r,r}}(n) \leq n^{2-\frac{1}{r-1} -o(1)}$.

\section{Disconnected infection rules} \label{sec:discon}
Up to this point we have restricted our attention to connected infection rules, mainly because we do not expect to understand the behaviour of a disconnected infection rule $H$ if we have not worked out the individual behaviour of its components, and the interpretation of graph percolation as a virus spreading lends itself to connected $H$. 
Nevertheless the general questions we have studied for connected infection rules are mathematically just as valid when $H$ is disconnected, and we close this survey by discussing this setting.

\subsection{Forests and $2$-regular graphs}
A consequence of the proof of Theorem \ref{thm:trees} is that the $T$-process for any tree $T$ will percolate for every starting graph with at least $2v(T)$ vertices that is not $T$-stable, even if it is just a union of $T$ minus an edge and isolated vertices.
Together with the constant maximum running time for trees this implies that $M_F(n) = O(1)$ for any forest $F$.
When studying maximum running times of graph bootstrap processes we are usually interested in starting graphs whose order is much larger than the order of the infection rule.
For forests, investigating the $F$-process on graphs with $v(F)$ vertices is of interest due to connections with Problem \ref{prob:trees}, 
asking for the largest possible running time for $t$-vertex trees.
In particular, an upper bound of the form $M_F(f) \leq c\cdot f$ for Question \ref{q:forest} below would result in a linear upper bound for trees.
We refer to \cite{FMSz2} for details. 

    \begin{qn} \label{q:forest}
    For $f\in \N$, what is  the maximum value of $M_F(f)$ over all  $f$-vertex forests $F$?
    \end{qn}

As for unions of cycles, i.e.\ $2$-regular graphs, recall that when the infection rule is $C_k$ any two vertices of distance $k-1$ become adjacent in each round of the process, which led to the maximum running time on $n$-vertex graphs essentially being $\log_{k-1}(n)$.
In the case that $H$ is the disjoint union of cycles the maximum running time is controlled by the cycle that provides the fastest reduction of the diameter, which is just the largest cycle in $H$.

    \begin{theorem}[\cite{FMSz2}]\label{thm:multiple_cycles}
    If  $s\geq 2$ and $H := C_{k_1} \sqcup \ldots \sqcup C_{k_s}$ is the disjoint union of cycles of lengths $k_1 \geq \ldots \geq k_s$, then for sufficiently large $n$ we have that
        \[
        \log_{k_1-1}(n) - 1 \leq M_H(n) < \log_{k_1-1}(n) + k_1^3s^4.
        \]
    \end{theorem}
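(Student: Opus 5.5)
The plan is to prove the two bounds separately, in both cases reducing to the single-cycle analysis of Example~\ref{ex:triangle} and Theorem~\ref{thm:cycles}. The mechanism is diameter reduction driven by the largest cycle $C_{k_1}$. The other components of $H$ only have to be present somewhere as copies. Such copies appear quickly once the graph has many edges.

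\textbf{Lower bound.} I would take the starting graph to be a long path $P$ on most of the $n$ vertices, placed next to a fixed gadget that provides disjoint copies of $C_{k_2},\ldots,C_{k_s}$.
\begin{itemize}
\item Make the gadget one spare copy of each smaller cycle; if $k_1=k_2$, use long cycles of length $k_1$ instead, so that a copy of $C_{k_1}$ minus an edge cannot be found inside the gadget.
\item Check that any new edge must come from a copy of $H-e$ in which $e$ lies on a copy of some $C_{k_j}$ sitting on the path, with the other components embedded elsewhere.
\item Deduce that on the path two vertices become adjacent only if their current distance is at most $k_j-1\le k_1-1$.
\end{itemize}
So distances along $P$ shrink by a factor of at most $k_1-1$ per round, and the endpoints need at least $\log_{k_1-1}(n)-1$ rounds.

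The delicate point is that copies of $C_{k_j}$ with $j\ge2$ can be formed by chords on the path as it shrinks, and these might speed things up. This is harmless: any edge added spans distance at most $k_1-1$ in the previous graph, by the same argument as for $C_{k_1}$ alone. Hence $\mathrm{dist}_{G_i}\ge \mathrm{dist}_{G_{i-1}}/(k_1-1)$ uniformly, and the lower bound follows.

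\textbf{Upper bound.} The plan has three phases.

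\emph{Warm-up phase.} If the process has not stabilised, then after some edge is added in round $1$, every component of $H$ has a copy in $G_1$. I would track the rounds until some component of $G_i$ has diameter at least $k_1$, or until the graph becomes dense enough that $s$ disjoint copies of each cycle exist in many places. I expect to bound the length of this phase by $O(k_1^3s^4)$ rounds. The rough reason is:
\begin{itemize}
\item A cycle $C_{k_j}$ inside a component of diameter at least $k_j$ forces closure of pairs at distance exactly $k_j-1$ along geodesics.
\item Within $O(k_1)$ rounds, a component of size larger than $k_1^2$ contains disjoint copies of every $C_{k_j}$, up to $s$ of each.
\item The Frobenius-number reasoning of Theorem~\ref{thm:cycles}, for lengths $k-2$ and $k$, handles the parity issues in a bounded number of steps.
\end{itemize}

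\emph{Main phase.} Once the auxiliary components are available disjointly from any chosen geodesic, each round realises the $C_{k_1}$ step on the big component. Any two vertices at distance $d$ then reach distance $\lceil d/(k_1-1)\rceil$, up to additive constants. This gives $\log_{k_1-1}(n)$ further rounds.

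\emph{Finishing phase.} Here I must handle small components of $G$ that never host copies of the other cycles, and the final merging. This should take only constantly many extra rounds, since components are never joined by a cycle rule. Vertices in different components never become adjacent, so each component is processed independently, but using copies of the other cycles found anywhere in the graph.

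The main obstacle is the warm-up bound with explicit polynomial dependence $k_1^3s^4$. The issue is guaranteeing \emph{disjoint} copies of the smaller cycles while the $C_{k_1}$-step is happening on a geodesic that may pass through them. I would handle this by showing that after $O(k_1 s)$ rounds some component contains a clique of size at least $k_1s$. Any short geodesic segment then avoids enough of that clique to host the other components. To prove this, I would use the fact that a component of diameter less than $k_1$ but with at least $k_1$ vertices, containing a $C_{k_1}$ minus an edge, becomes complete within $O(k_1^2)$ rounds.
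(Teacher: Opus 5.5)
Your lower bound is essentially sound. An edge added in round $i$ closes a path of length $k_j-1\le k_1-1$ in $G_{i-1}$, so distances shrink by at most a factor $k_1-1$ per round, and the disjoint gadget keeps the $C_{k_1}$-rule active on the path. Two small fixes: make explicit that every round in which the endpoints are at distance at least $k_1-1$ adds an edge, and track distances with ceilings. The special treatment of $k_1=k_2$ is unnecessary, since the contraction bound holds for every added edge.

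The upper bound has a genuine gap. Its engine is the claim that after $O(k_1s)$ rounds some component contains a clique of order $k_1s$. This is supported by the lemma that a component of diameter less than $k_1$, with at least $k_1$ vertices and containing $C_{k_1}$ minus an edge, becomes complete within $O(k_1^2)$ rounds. Both statements are false.
\begin{itemize}
\item If all $k_j$ are even, every new edge joins the ends of an odd path inside one component, so bipartite components stay bipartite forever. For $H=C_4\sqcup C_4$ started from your own extremal graph (a long path plus a disjoint $C_4$), no triangle ever appears, yet the process lasts $\Theta(\log n)$ rounds.
\item A component that is itself a copy of $C_{k_1}$ is stable when $H=C_{k_1}\sqcup C_{k_1}$.
\end{itemize}
Beyond this, the length of the warm-up, the ``up to additive constants'' in the main phase, and the entire finishing phase are asserted rather than proved. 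So the explicit term $k_1^3s^4$ is never actually obtained.

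The missing idea is that the auxiliary cycles never need to avoid arbitrary geodesics. If the process does not stop immediately, $G_1$ contains a copy of $H$. Fix its copies of $C_{k_2},\dots,C_{k_s}$ and let $S$ be their vertex set, so $|S|\le (s-1)k_1$. Since edges are never removed, any copy of $C_{k_1}$ minus an edge in $G_{1+t}-S$ extends, via these fixed cycles, to a copy of $H$ minus an edge. By induction, $G_{1+t}-S$ therefore contains the $t$-th graph of the $C_{k_1}$-process started from $G_1-S$. By Theorem~\ref{thm:cycles}, that process stabilises within $\log_{k_1-1}(n)+O(1)$ rounds. This gives the main term with no warm-up phase at all, and it is the route the paper indicates.

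What remains is a bounded clean-up. You must control the at most $(s-1)k_1$ vertices of $S$, the pieces of $G_1-S$ they glue together, and the extra edges produced by the shorter cycle rules. Then you must show that all of this takes fewer than $k_1^3s^4$ further rounds. Your proposal contains no argument for this part.
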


The idea underlying the proof is to take a copy of $C_{k_1} \sqcup \ldots \sqcup C_{k_s}$ that appears in the first step of the process, fix all cycles of that copy but the largest one, and consider the $C_{k_1}$-process on the part of the host graph that avoids the fixed cycles. 
This approach is an instance of a more general strategy that we will explore next.

\subsection{Bounds obtained from simulation}
For the rest of our discussion, we concentrate on disconnected infection rules with just two connected components.
Given a graph $H = H_1 \sqcup H_2$, which is the disjoint union of $H_1$ and $H_2$ to what extent is the running time $M_H(n)$ determined by $M_{H_1}(n)$ and $M_{H_2}(n)$?
In particular, can we hope for some non-trivial bounds in terms of $M_{H_1}(n)$ and $M_{H_2}(n)$?

A general strategy to obtain lower bounds on $M_{H_1\sqcup H_2}(n)$ is to take a starting graph $G$ for $H_1$ together with an isolated copy of $H_2$ such that $\final{G}_{H_1}$ does not contain any copies of $H_2$ minus an edge.
In the $H$-process on $G\sqcup H_2$ each completed copy of $H$ will then consist of the fixed copy of $H_2$ and a new copy of $H_1$ with vertices in $V(G)$.
This is essentially the simulation construction introduced in Section \ref{sec:graph parameters}.
For example, consider $H_1 = K_6$ and $H_2$ a sufficiently large wheel, say $H_2 = W_{101}$, with the starting graph $G$ constructed in \cite{balogh2019maximum} to give $M_{K_6}(n) = \Theta(n^2)$.
It is not immediately obvious, but 
possible to verify, that the final graph $\final{G}_{K_6}$ is free of copies of $W_{101}$ minus an edge.
Therefore $\tau_{K_6\sqcup W_{101}}(G\sqcup W_{101}) = \tau_{K_6}(G) = \Theta(n^2) = \omega(M_{W_{101}}(n))$.
In this example $M_{H_1\sqcup H_2}(n)$ has the same asymptotic growth as the larger of $M_{H_1}(n)$ and $M_{H_2}(n)$.

Our next example (Figure \ref{fig:disconnected_2}) demonstrates that $M_H(n)$ may lie strictly between $M_{H_1}(n)$ and $M_{H_2}(n)$.

    \begin{figure}[h]
    \centering
    \includegraphics[width=0.5\linewidth]{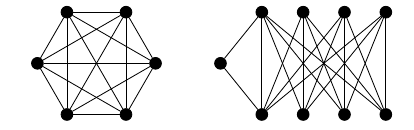}
    \caption{A graph $H = H_1 \sqcup H_2$ whose running time lies strictly between those of its components.}
    \label{fig:disconnected_2}
    \end{figure}

As in our previous example we choose $H_1=K_6$.
The component $H_2$ is obtained by starting with $K_{4,4}$ and adding a new vertex with precisely one neighbour in each part.
Theorem \ref{thm:23bound} implies $M_{H_2}(n) = \Omega(n)$, and it is not difficult to check that due to the vertex of degree two and the symmetries of $K_{4,4}$ it is $(2,1)$-separable and self-percolating, so $M_{H_2}(n) = O(n)$ by Theorem \ref{thm:girth_construction}. On the other hand, $M_{H_1}(n) = \Theta(n^2)$. 
In \cite{FMSz3} it is shown that the running time of $H$ is polynomially separated from those of its components: $n^{1+\alpha}\leq M_H(n)\leq n^{2-\alpha}$ for some $\alpha>0$. The upper bound here uses the fact that once we have more than $\ex(n, K_{4,4})$ edges, we can find many copies of $K_{4,4}$. Once these appear we can show a growing clique behaviour that forces the process to stabilise in at most linearly more steps. The lower bound is obtained via simulation as described above. 
To construct a starting graph $G$ for $K_6$ that is free of $H_2$ minus an edge we use line chains generated by a hypergraph of high girth, similar to the proof of Theorem \ref{thm:connectivity}.

\subsection{Interacting infection rules}
In both examples of the previous section we reduced the disconnected infection rule to a connected one by confining one component to a single isolated copy.
Although in the upper bound for the graph in Figure \ref{fig:disconnected_2} it is crucial that new copies of both components appear throughout the process, we are essentially assuming a two-stage process.
First we consider only new copies of $K_6$, until we can be sure that several copies of $K_{4,4}$ appear, that is, when the number of new edges reaches $\ex(n,K_{4,4})$.
From that point onwards the first component takes over and causes the process to terminate within a linear number of steps.
None of the bounds required any interplay between the two connected infection rules.

An example where the individual infection rules of the components do work together is given in Figure \ref{fig:disconnected_1}:
The triangle component causes vertices of distance two in the host graph to become adjacent at every step (apart from the six vertices that are reserved for the right component), while the right component introduces edges between the endpoints of any two edge-disjoint triangles.
This leads to a maximum running time of $O(1)$. Individually however, the component have maximum running time logarithmic, and linear respectively (using Theorems \ref{thm:23bound} and \ref{thm:girth_construction}). 

    \begin{figure}[h]
    \centering
    \includegraphics[width=0.5\linewidth]{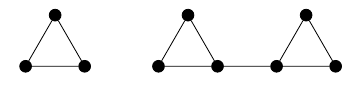}
    \caption{A disconnected graph with maximum running time  $O(1)$.}
    \label{fig:disconnected_1}
    \end{figure}

This example further tells us that Corollary \ref{cor:23bound} breaks down for disconnected infection rules. 
We remark though that Theorem \ref{thm:23bound} can be extended to disconnected $H$ by the stronger requirement that every component of $H$ has minimum degree at least $2$ and maximum degree at least $3$.

We have seen examples where $M_H(n)$ is much smaller than either of $M_{H_1}(n)$ and $M_{H_2}(n)$, asymptotically equal to one of $M_{H_1}(n)$ and $M_{H_2}(n)$, or strictly between the two.
However we are not aware of any choice of $H$ that satisfies $M_H(n) = \omega(M_{H_1}(n))$ and $M_H(n) = \omega(M_{H_2}(n))$.

    \begin{qn}
    Is there a disconnected infection rule $H = H_1 \sqcup H_2$ such that $M_H(n) = \omega(M_{H_1}(n))$ and $M_H(n) = \omega(M_{H_2}(n))$?
    \end{qn}

\subsubsection*{Acknowledgements} The third author is deeply grateful to J\'ozsef Balogh, Gal Kronenberg, and Alexey Pokrovskiy for their collaboration on \cite{balogh2019maximum}, ideas from that paper were instrumental in shaping this survey.  We also thank the anonymous referee for their careful reading and helpful suggestions.

\bibliography{Biblio_short_BCC}

\end{document}